\providecommand{\U}[1]{\protect\rule{.1in}{.1in}}
\numberwithin{equation}{section}
\newtheorem{theorem}{Theorem}[section]
\newtheorem{lemma}[theorem]{Lemma}
\newtheorem{corollary}[theorem]{Corollary}
\newtheorem{proposition}[theorem]{Proposition}
\newtheorem{remark}[theorem]{Remark}
\newtheorem{definition}[theorem]{Definition}
\newtheorem{conjecture}[theorem]{Conjecture}
\newtheorem{problem}[theorem]{Problem}
\def\<{\langle}
\def\>{\rangle}
\def\d{{\rm d}}
\def\L{\mathcal{L}}
\def\div{{\rm div}}
\def\E{\mathbb{E}}
\def\N{\mathbb{N}}
\def\P{\mathbb{P}}
\def\R{\mathbb{R}}
\def\T{\mathbb{T}}
\def\Z{\mathbb{Z}}
\def\eps{\varepsilon}
\begin{document}

\title{Scaling limit of stochastic 2D Euler equations with transport noises to the deterministic Navier--Stokes equations}
\author{Franco Flandoli\footnote{Email: franco.flandoli@sns.it. Scuola Normale Superiore of Pisa, Piazza dei Cavalieri 7, 56124 Pisa, Italy.}
\quad Lucio Galeati\footnote{Email: lucio.galeati@iam.uni-bonn.de. Institute of Applied Mathematics, University of Bonn, Germany.}
\quad Dejun Luo\footnote{Email: luodj@amss.ac.cn. Key Laboratory of RCSDS, Academy of Mathematics and Systems Science, Chinese Academy of Sciences, Beijing 100190, China, and School of Mathematical Sciences, University of the Chinese Academy of Sciences, Beijing 100049, China. } }

\maketitle

\vspace{-10pt}

\begin{abstract}
We consider a family of stochastic 2D Euler equations in vorticity form on the torus, with transport type noises and $L^2$-initial data. Under a suitable scaling of the noises, we show that the solutions converge weakly to that of the deterministic 2D Navier--Stokes equations. Consequently, we deduce that the weak solutions of the stochastic 2D Euler equations are approximately unique and ``weakly quenched exponential mixing''.
\end{abstract}

\section{Introduction}

Let $\T^2=\R^2/\Z^2$ be the 2D torus and $\Z^2_0= \Z^2\setminus \{0\}$ the nonzero lattice points. Define
  $$\sigma_k(x)= \frac{k^\perp}{|k|} e_k(x),\quad x\in \T^2,\ k\in \Z^2_0,$$
where $k^\perp= (k_2, -k_1)$ and $\{e_k \}_{k\in \Z^2_0}$ is the usual trigonometrical basis of $L^2(\T^2)$, see the beginning of Section 2. Then $\{\sigma_k\}_{k\in \Z^2_0}$ is a complete orthonormal basis of the space of square integrable, divergence free vector fields on $\T^2$ with zero mean.

In a previous work \cite{FlaLuo-2}, the first and the third named authors studied the vorticity form of the stochastic 2D Euler equations with transport type noise:
  \begin{equation}\label{approx-eq}
  \d \xi^N + u^N\cdot\nabla\xi^N\,\d t= 2\sqrt{\nu}\,\eps_N \sum_{|k|\leq N} \frac1{|k|} \sigma_{k} \cdot \nabla\xi^N \circ \d W^{k},
  \end{equation}
where $\nu>0$ is a constant, $\{\eps_N\}_{N\geq 1}$ a sequence of positive numbers and $\{W^k\}_{k\in \Z^2_0}$ is a family of independent standard Brownian motions on some filtered probability space $(\Omega, \mathcal F, \mathcal F_t, \P)$. In the above equation, $u^N= (u^N_1, u^N_2)$ is the velocity field and $\xi^N= \nabla^\perp \cdot u^N= \partial_2 u^N_1 - \partial_1 u^N_2$ is the vorticity; conversely, $u^N =K\ast \xi^N$ where $K$ is the Biot--Savart kernel. The equation \eqref{approx-eq} has the enstrophy measure $\mu$ on $\T^2$ as the invariant measure, which is supported on $H^{-1-}(\T^2) = \bigcap_{s<-1} H^s(\T^2)$, $H^s(\T^2)$ being the usual Sobolev space on $\T^2$. For any fixed $N\geq 1$, it is known that \eqref{approx-eq} admits a stationary solution $\xi^N$ with paths in $C\big([0,T], H^{-1-}(\T^2)\big)$ (taking $\rho_0\equiv 1$ in \cite[Theorem 1.3]{FlaLuo-1}). We choose the parameter $\eps_N$ in such a way that it compensates the coefficient appearing in the It\^o--Stratonovich correction term. More precisely, let
  $$\eps_N= \bigg(\sum_{|k|\leq N} \frac1{|k|^2} \bigg)^{-1/2} \sim (\log N)^{-1/2},$$
then, in the It\^o formulation, \eqref{approx-eq} becomes
  $$\d \xi^N + u^N\cdot\nabla\xi^N\,\d t= 2\sqrt{\nu}\,\eps_N \sum_{|k|\leq N} \frac1{|k|} \sigma_{k} \cdot \nabla\xi^N \, \d W^{k} + \nu\Delta \xi^N\,\d t.$$
It was proved in \cite{FlaLuo-2} that the stationary solutions $\xi^N$ of \eqref{approx-eq} converge to the unique-in-law stationary solution of the stochastic 2D Navier--Stokes equations driven by additive space-time white noise
  \begin{equation}\label{NS-vort}
  \d\xi +u\cdot\nabla\xi\,\d t=\nu\Delta\xi \,\d t+ \sqrt{2\nu } \, \nabla^{\perp} \cdot\d W.
  \end{equation}
Here, $W= \sum_{k\in \Z^2_0} \sigma_{k} W^{k}$ is a cylindrical Brownian motion in the Hilbert space of divergence free vector fields on $\mathbb T^2$. The equation \eqref{NS-vort}, in the velocity form, has been studied intensively in the past three decades, see for instance \cite{AC, AF, DaPD}. In particular, it was shown in \cite{DaPD} that \eqref{NS-vort} has a pathwise unique strong solution for $\mu$ almost every initial data in Besov spaces of negative order. As a consequence of the Yamada--Watanabe type theorem (see e.g. \cite{Kur}), the stationary solutions of \eqref{NS-vort} are unique in law.

On the other hand, the second named author considered in \cite{Gal} a similar scaling limit for a sequence of stochastic transport linear equations, but in a different regime, namely for function-valued solutions of suitable regularity. To state the result we introduce the notation $\ell^p\ (p\in [1,\infty])$ which are the usual spaces of real sequences indexed by $\Z^2_0$ and denote the norm by $\|\cdot\|_{\ell^p}$. Let $\theta^N_\cdot \in \ell^2\, (N\geq 1)$ be a sequence verifying, for all $N\in \N$,
  \begin{equation}\label{theta}
  \theta^N_k = \theta^N_j \quad \mbox{whenever } |k|= |j|
  \end{equation}
and
  \begin{equation}\label{theta-N}
  \lim_{N\to\infty} \frac{\|\theta^N_\cdot\|_{\ell^\infty}} {\|\theta^N_\cdot\|_{\ell^2}} =0.
  \end{equation}
The main result of \cite{Gal} asserts that, if
  \begin{equation}\label{eps-N}
  \eps_N = \frac{2\sqrt{\nu}} {\|\theta^N_\cdot\|_{\ell^2}},
  \end{equation}
then the solutions $\xi^N$ of the sequence of stochastic transport linear equations ($b$ is a vector field on $\T^2$)
  $$\d \xi^N = b\cdot \nabla \xi^N\,\d t + \eps_N \sum_{k\in \Z^2_0} \theta^N_k \sigma_k \cdot \nabla \xi^N \circ \d W^k$$
converge to the solution of the parabolic Cauchy problem
  $$\partial_t \xi= \nu \Delta \xi+  b\cdot \nabla \xi, \quad \xi(0)= \xi_0.$$
For $\xi_0\in L^2(\T^2)$, this last equation admits a unique weak solution in $L^2\big(0,T; L^2(\T^2) \big)$ under mild assumptions on $b$, see for instance \cite[Lemma 3.3]{Gal}.

Motivated by the above discussions, we consider, in the regime of regular solutions (compared to the white noise solutions considered in \cite{FlaLuo-2}), the stochastic 2D Euler equations
  \begin{equation}\label{SEE-vort}
  \d \xi^N + u^N\cdot\nabla\xi^N\,\d t= \eps_N \sum_{k\in \Z^2_0} \theta^N_k \sigma_{k} \cdot \nabla\xi^N \circ \d W^{k},
  \end{equation}
where $\big\{ \theta^N_\cdot \big\}_{N\geq 1}$ satisfies \eqref{theta} and \eqref{theta-N}, and $\eps_N$ is defined as in \eqref{eps-N}. We assume $\xi^N_0 = \xi_0\in L^2(\T^2)$ with zero mean. Then one can show that the equation \eqref{SEE-vort} admits a solution $\xi^N$ (weak in both analytic and probabilistic sense), satisfying
  $$\sup_{t\in [0,T]} \int_{\T^2} |\xi^N(t,x)|^2 \,\d x <+\infty.$$
We will prove that such more regular solutions of equation \eqref{SEE-vort} converge to the unique solution of the \textit{deterministic} 2D Navier--Stokes equations
  \begin{equation}\label{NSE}
  \partial_{t}\xi +u\cdot\nabla\xi=\nu\Delta\xi, \quad \xi(0)=\xi_0 .
  \end{equation}
According to the classical theory of 2D Navier--Stokes equations (see \cite[Theorem 3.2]{Teman} for the velocity form), the above equation has a unique solution.

A direct consequence of the above scaling limit is that the transport type noises considered here regularize the 2D Euler equations asymptotically. More precisely, it is well known that the 2D Euler equations has a unique solution if the initial data $\xi_0$ belongs to $L^\infty(\T^2)$, while the uniqueness of solutions remains an open problem in the case $\xi_0\in L^p(\T^2)$ for $p<\infty$. Although we cannot prove that the stochastic 2D Euler equation \eqref{SEE-vort} has a unique solution for $L^2(\T^2)$-initial data, the above result shows that, in the limit, we get the uniquely solvable 2D Navier--Stokes equation \eqref{NSE}. As a result, the distances between the laws of weak solutions of \eqref{SEE-vort} tend to zero as $N\to \infty$. We call such a property the approximate weak uniqueness, see Section \ref{subsec-uniqueness} for more details.

Our main result is the convergence to deterministic Navier--Stokes equations; however, tuning parameters in the right way we may construct sequences converging to deterministic Euler equation. More precisely, given any viscosity solution of 2D Euler, we can find a suitable sequence converging to it, see Section \ref{sec 6.2} for more details. We do not know the converse, namely if every limiting measure constructed in this way is a superposition of viscosity solutions, but our result makes this conjecture plausible. It is very important to identify selection criteria, for instance by viscosity, by noise or by additional physical requirements, in view of the multiplicity of solutions found recently by the method of convex integration \cite{DeLellSze};\ although our result is not conclusive, it makes plausible that the zero-noise limit selects viscosity solutions. Notice that this is different from what happens for certain examples of linear transport equations \cite{AttFla}.

Our result also has interesting implications related to the mixing behavior of incompressible flows, a phenomenon which recently attracted a lot of attention, see for instance \cite{ACM14, ACM19, YZ} and the references therein. In \cite{ACM19}, Alberti et al. considered the solutions to the continuity equation
  \begin{equation}\label{CE}
  \partial_t \rho_t + \div(\rho_t u) =0
  \end{equation}
and estimated the ``mixedness'' of $\rho_t$ as $t\to \infty$ in terms of the negative Sobolev norm $\|\rho_t\|_{\dot H^{-1}}$. Here $\dot H^{s}(\T^2)\, (s\in\R)$ denote the homogeneous Sobolev spaces. They constructed a bounded and divergence free vector field $u\in C^\infty \big([0,\infty) \times \R^d, \R^d\big)$ and a bounded solution $\rho\in C^\infty \big([0,\infty) \times \R^d \big)$ to \eqref{CE} such that, for any $0<s<2$, it holds
  $$\|\rho_t\|_{\dot H^{-s}} \leq C_s\, e^{-c s t}, \quad t\geq 0,$$
where $C_s>0$ and $c>0$ are constants. Such exponential mixing result is in fact optimal, taking into account the lower bounds on functional mixing scale proved in \cite{IKX, Seis}. Using our limit result and the exponential decay of the energy and the enstrophy of the solution to the Navier--Stokes equation \eqref{NSE}, we prove in Section \ref{subsec-mixing} that the solutions to the stochastic 2D Euler equations \eqref{SEE-vort} satisfy the ``weakly quenched exponential mixing'' property, a notion inspired by discussions in \cite[Section 7]{CoKiRyZl}.

However, the decay in $\dot{H}^{-s}$-norms does not extend to the $L^{2}$-norm and our result does not imply anomalous dissipation of enstrophy. This is a difficult open question, which is discussed in Section \ref{subsec-anomalous}.

This paper is organized as follows. In Section 2, we introduce some notations and state the main results, including the existence of weak solutions to the stochastic 2D Euler equations \eqref{SEE-vort} and the scaling limit to the deterministic 2D Navier--Stokes equation \eqref{NSE}, as well as a finite dimensional convergence result. The proofs of these results are provided in Sections 3 to 5. In the last sections, we discuss the consequences of the scaling limit in more detail.

\section{Functional settings and main results}\label{sec2}

In this section, we give some more notations for functional spaces and state the main results of the paper. Let $C^\infty(\T^2)$ be the space of smooth function on $\T^2$. We write $\<\cdot , \cdot \>$ and $\|\cdot \|_{L^2}$ for the inner product and the norm in $L^2(\T^2)$. Recall also the Sobolev spaces $H^s(\T^2),\, s\in \R$. Denote by
  $$e_k(x) = \sqrt{2} \begin{cases}
  \cos(2\pi k\cdot x), & k\in \Z^2_+, \\
  \sin(2\pi k\cdot x), & k\in \Z^2_-,
  \end{cases} \quad x\in \T^2, $$
where $\Z^2_+= \{k\in \Z^2_0: (k_1>0) \mbox{ or } (k_1=0,\, k_2>0)\}$ and $\Z^2_-= -\Z^2_+$. To save notations, we shall write the vector valued spaces $L^2(\T^2, \R^2)$ and $H^s(\T^2, \R^2)$ simply as $L^2(\T^2)$ and $H^s(\T^2)$. We denote by $H$ (resp. $V$) the subspace of $L^2(\T^2)$ (resp. $H^1(\T^2)$) of functions with zero mean. Moreover, we assume $\{W^k\}_{k\in \Z^2_0}$ is a family of independent $\mathcal F_t$-Brownian motions on the filtered probability space $(\Omega, \mathcal F, \mathcal F_t, \P)$.

First, we fix $\theta_\cdot \in \ell^2$ verifying \eqref{theta} and consider the following stochastic 2D Euler equation in vorticity form:
  \begin{equation}\label{SEE-vort-1}
  \d \xi + u\cdot \nabla\xi\,\d t=  \eps \sum_{k\in \Z^2_0} \theta_k \sigma_{k} \cdot \nabla\xi \circ \d W^{k}, \quad \xi(0)= \xi_0\in H,
  \end{equation}
with
  \begin{equation}\label{epsilon}
  \eps =\frac{2\sqrt{\nu}}{\|\theta_\cdot\|_{\ell^2}} .
  \end{equation}
Using \eqref{theta}, it is not difficult to prove the simple equality (cf. \cite[Lemma 2.6]{FlaLuo-2})
  \begin{equation}\label{useful-equality}
  \sum_{k\in \Z^2_0} \theta_k^2\, \sigma_k(x)\otimes \sigma_k(x) \equiv \frac12 \|\theta_\cdot\|^2_{\ell^2} I_2,\quad x\in\mathbb{T}^2,
  \end{equation}
where $I_2$ is the $2\times 2$ identity matrix. From this we deduce the It\^o formulation of \eqref{SEE-vort-1}:
  \begin{equation}\label{SEE-vort-2}
  \d \xi + u\cdot \nabla\xi\,\d t= \nu \Delta\xi\,\d t + \eps \sum_{k\in \Z^2_0} \theta_k \sigma_{k} \cdot \nabla\xi\, \d W^{k}.
  \end{equation}
This equation is understood as follows: for any $\phi \in C^\infty(\T^2)$, it holds $\P$-a.s. for all $t\in [0,T]$,
  \begin{equation}\label{SEE-formulation}
  \<\xi_t,\phi\> = \<\xi_0,\phi\> + \int_0^t \<\xi_s, u_s\cdot \nabla\phi\>\,\d s + \nu \int_0^t \<\xi_s, \Delta\phi\>\,\d s - \eps \sum_{k\in \Z^2_0} \theta_k \int_0^t \<\xi_s, \sigma_k\cdot \nabla\phi\>\,\d W^k_s.
  \end{equation}
Recall that $u$ is related to $\xi$ via the Biot--Savart kernel $K$ on $\T^2$: $u=K\ast \xi$. Thus if $\xi\in L^2 \big(\Omega,L^2 (0,T; H) \big)$, then $u\in L^2 \big(\Omega,L^2 (0,T; V) \big)$. Under this condition, if $\xi$ (and also $u$) is $\mathcal F_t$-progressively measurable, it is clear that all the terms in the above equation makes sense. For instance, the stochastic integral is a square integrable martingale since
  $$\aligned
  \E \Bigg(\sum_{k\in \Z^2_0} \theta_k^2 \int_0^t \< \xi_s, \sigma_k\cdot \nabla\phi\>^2\,\d s \Bigg) &
  \leq \|\theta_\cdot\|_{\ell^\infty}^2 \E \Bigg( \int_0^T \sum_{k\in \Z^2_0}\< \xi_s\nabla\phi, \sigma_k \>^2\,\d s\Bigg)\\
  &\leq \|\theta_\cdot\|_{\ell^\infty}^2 \E \bigg( \int_0^T \Vert\xi_s\nabla\phi \Vert_{L^2}^2\,\d s\bigg)\\
  &\leq \|\theta_\cdot\|_{\ell^\infty}^2 \|\nabla\phi\|_\infty^2 \E \int_0^T \|\xi_s\|_{L^2}^2 \,\d s <+\infty,
  \endaligned$$
  where we used the fact that $\{\sigma_k\}_{k\in\mathbb{Z}_0^2}$ form an (incomplete) orthonormal system in $L^2(\T^2,\R^2)$.
From this result we can give the definition of solutions to \eqref{SEE-formulation}.

\begin{definition}\label{SEE-def}
We say that \eqref{SEE-formulation} has a weak solution if there exist a filtered probability space $\big( \Omega, \mathcal F, \mathcal F_t, \P\big)$, a sequence of independent $\mathcal F_t$-Brownian motions $\{W^k\}_{k\in \Z^2_0}$ and an $\mathcal F_t$-progressively measurable process $\xi\in L^2 \big(\Omega,L^2 (0,T; H) \big)$ with $\mathbb{P}$-a.s. weakly continuous trajectories such that for any $\phi \in C^\infty(\T^2) $, the equality \eqref{SEE-formulation} holds $\P$-a.s. for all $t\in [0,T]$.
\end{definition}

Note that the solution is weak in both the probabilistic and the analytic sense. Our first result is the existence of solutions to \eqref{SEE-formulation}.

\begin{theorem}\label{thm-existence}
For any $\xi_0\in H$, there exists at least one weak solution to \eqref{SEE-formulation} with trajectories in $L^\infty(0,T; H)$; more precisely,
\begin{equation}\label{energy estimate thm-existence}
\sup_{t\in [0,T]} \|\xi_t \|_{L^2} \leq \|\xi_0 \|_{L^2}\quad \mathbb{P}\mbox{-a.s.}
\end{equation}
\end{theorem}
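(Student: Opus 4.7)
The plan is a standard stochastic-compactness argument via a Galerkin approximation. For $n\in\N$, let $H_n=\mathrm{span}\{e_k : k\in\Z^2_0,\, |k|\leq n\}$, with orthogonal projection $\Pi_n$, and consider the finite-dimensional SDE on $H_n$
\begin{equation*}
\d\xi^n = -\Pi_n\bigl(u^n\cdot\nabla\xi^n\bigr)\,\d t + \eps\sum_{|k|\leq n}\theta_k\,\Pi_n\bigl(\sigma_k\cdot\nabla\xi^n\bigr)\circ \d W^k,\qquad \xi^n_0=\Pi_n\xi_0,
\end{equation*}
with $u^n=K\ast\xi^n$. The coefficients are locally Lipschitz on $H_n$, so a unique local strong solution exists. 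Applying the Stratonovich chain rule to $\|\xi^n_t\|_{L^2}^2$ produces cancellations from the self-adjointness of $\Pi_n$ combined with the divergence-freeness of $u^n$ and of each $\sigma_k$: $\<\xi^n,\Pi_n(u^n\cdot\nabla\xi^n)\>=\<\xi^n,u^n\cdot\nabla\xi^n\>=0$, and analogously for the noise. Hence $\|\xi^n_t\|_{L^2}=\|\Pi_n\xi_0\|_{L^2}\leq\|\xi_0\|_{L^2}$ $\P$-a.s.\ for all $t$, which rules out explosion and, via Biot--Savart, yields a uniform bound on $u^n$ in $L^\infty(0,T;V)$.

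For tightness, I rewrite the equation in It\^o form: the Stratonovich corrector is a second-order operator $\mathcal{L}_n$ which, by \eqref{useful-equality}, converges formally to $\nu\Delta$ as $n\to\infty$. Using the uniform energy estimate, each drift and diffusion term on the right-hand side can be controlled in a suitable negative Sobolev norm, yielding $\E\|\xi^n_t-\xi^n_s\|_{H^{-\alpha}}^2\leq C|t-s|$ for some $\alpha>1$ (the bilinear term lies in $H^{-1}$ uniformly since $u^n\xi^n\in L^1$, while the stochastic integral is handled by BDG together with the summability $\sum_k\theta_k^2<\infty$). Combined with the uniform bound $\|\xi^n\|_{L^\infty_t L^2_x}\leq\|\xi_0\|_{L^2}$ and an Aubin--Lions type argument, this provides tightness of the laws of $\xi^n$ in $C([0,T];H^{-\alpha})\cap L^2(0,T;H^{-s})$ for any $s>0$, together with strong tightness of $u^n=K\ast\xi^n$ in $L^2(0,T;L^2)$.

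Applying the Jakubowski--Skorokhod representation theorem (needed to accommodate the weak-$*$ topology on bounded sets of $L^\infty(0,T;H)$), I obtain on a new probability space random variables $\tilde\xi^n\to\tilde\xi$ almost surely in the above topologies, with $\tilde u^n\to\tilde u$ strongly in $L^2(0,T;L^2)$ and a limiting family of Brownian motions $\tilde W^k$. The nonlinearity $\int_0^t\<\tilde\xi^n_s,\tilde u^n_s\cdot\nabla\phi\>\,\d s$ passes to the limit because the weak convergence of $\tilde\xi^n$ in $L^2_{t,x}$ combines with the strong convergence of $\tilde u^n$; the It\^o corrector $\<\tilde\xi^n,\mathcal{L}_n^*\phi\>$ converges to $\nu\<\tilde\xi,\Delta\phi\>$ via \eqref{useful-equality} and dominated convergence. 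I expect the main obstacle to be the identification of the stochastic integral in the limit, which I would carry out by the standard martingale-characterization argument (\emph{à la} Debussche--Glatt-Holtz--Temam), combining the uniform BDG bound with the almost-sure convergence of the integrands to upgrade to $L^2(\Omega)$ convergence of the integrals and to recover the martingale structure with respect to the correct filtration. Finally, \eqref{energy estimate thm-existence} descends from the uniform Galerkin bound by lower semicontinuity of $\|\cdot\|_{L^2}$ under weak convergence, and the $\P$-a.s.\ weak continuity of trajectories follows from the fact that each term on the right-hand side of \eqref{SEE-formulation} is continuous in $t$.
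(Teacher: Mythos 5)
Your proposal is correct in outline and follows the same stochastic-compactness template as the paper, but with two substantive deviations worth recording. First, you discretize the \emph{Stratonovich} equation with projected noise $\Pi_n(\sigma_k\cdot\nabla\xi^n)$, which conserves $\|\xi^n\|_{L^2}$ exactly; the paper's Section 3 instead takes the Galerkin truncation of the It\^o form \eqref{SEE-vort-2}, keeping the exact dissipative term $\nu\Delta\xi_n$, so that It\^o's formula gives $\d\|\xi_n\|_{L^2}^2\le 0$ (the It\^o correction $\eps^2\sum_k\theta_k^2\|\Pi_n(\sigma_k\cdot\nabla\xi_n)\|_{L^2}^2$ is dominated by $2\nu\|\nabla\xi_n\|_{L^2}^2$) and no approximate corrector ever appears. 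Your scheme is exactly the one the paper analyses later, in Section 5 (Theorem \ref{thm sec5}), where the convergence of the corrector $\tfrac{\eps^2}{2}C_n\phi\to\nu\Delta\phi$ is established by a Fourier computation (Proposition \ref{fundamental lemma sec5}); for a \emph{fixed} $\theta\in\ell^2$ this convergence does hold (the relevant tail $\sum_{k:|k-j|>n}\theta_k^2$ vanishes as $n\to\infty$), but ``\eqref{useful-equality} and dominated convergence'' is not quite a proof --- you also need the uniform operator bound $\|C_n\phi\|_{L^2}\lesssim\|\theta\|_{\ell^2}^2\|\nabla^2\phi\|_{L^2}$ to run the increment and limit estimates. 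If you want the shortest path to the existence theorem, truncate the It\^o form as the paper does.

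The one genuine gap is in the tightness step. A bound of the form $\E\|\xi^n_t-\xi^n_s\|_{H^{-\alpha}}^2\le C|t-s|$ does \emph{not} yield tightness in $C([0,T];H^{-\alpha'})$: Kolmogorov/Simon-type criteria require $\E\|\xi^n_t-\xi^n_s\|^p\le C|t-s|^{1+\beta}$ with $\beta>0$, equivalently a uniform $W^{\gamma,p}$-in-time bound with $\gamma p>1$, and with $p=2$ and exponent $1$ you only reach $\gamma<1/2$, i.e.\ $\gamma p<1$. This is precisely why the paper estimates \emph{fourth} moments, $\E\big(\<\xi_n(t)-\xi_n(s),e_k\>^4\big)\le C|k|^8|t-s|^2$ (Lemma \ref{lem-estimate}), which give a uniform $W^{1/3,4}(0,T;H^{-\beta})$ bound and hence, via Simon's compactness theorem, tightness in $C([0,T];H^-)$. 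The repair is immediate --- your a.s.\ bound $\|\xi^n_t\|_{L^2}\le\|\xi_0\|_{L^2}$ makes all moments of the increments available --- but as written the second-moment estimate only gives tightness in $L^2(0,T;H^{-s})$, not in a space of continuous paths. The remaining steps are fine up to routine variation: ordinary Skorokhod on the Polish space $C([0,T];H^-)\times C\big([0,T];\R^{\Z^2_0}\big)$ suffices (Jakubowski is not needed, since the paper recovers the $L^\infty(0,T;L^2)$ bound a posteriori by lower semicontinuity, Lemma \ref{lem-bddness}); your martingale-characterization identification of the stochastic integral replaces the paper's use of Skorokhod's lemma on convergence of stochastic integrals; and the derivation of \eqref{energy estimate thm-existence} and of weak continuity of trajectories coincides with the paper's.
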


Next, we take a sequence $\theta^N \in \ell^2$, satisfying \eqref{theta} and \eqref{theta-N}, and consider the stochastic 2D Euler equations \eqref{SEE-vort}. Similarly to the above discussions, \eqref{SEE-vort} is understood as follows: for any $\phi\in C^\infty(\T^2)$ and $t\in [0,T]$,
  \begin{equation}\label{SEE-approx-formulation}
  \big\<\xi^N_t,\phi \big\> = \<\xi_0,\phi\> + \int_0^t \big\<\xi^N_s, u^N_s\cdot \nabla\phi \big\>\,\d s + \nu \int_0^t \big\<\xi^N_s, \Delta\phi \big\>\,\d s - \eps_N \sum_{k\in \Z^2_0} \theta^N_k \int_0^t \big\<\xi^N_s, \sigma_k\cdot \nabla\phi \big\>\,\d W^k_s.
  \end{equation}
We remark that Theorem \ref{thm-existence} only provides us with weak solutions, thus the processes $\xi^N_\cdot$ might be defined on different probability spaces. The relevant notion of convergence of these processes is the weak convergence of their laws. Here is the main result of this paper.

\begin{theorem}\label{thm-main}
Assume that the conditions \eqref{theta}--\eqref{eps-N} hold. Let $Q^N$ be the law of $\xi^N_\cdot$, $N\geq 1$. Then the family $\big\{ Q^N \big\}_{N\geq 1}$ is tight in $C([0,T];H^-)$ and it converges weakly to $\delta_{\xi_\cdot}$, where $\xi_\cdot$ is the unique solution of the 2D Navier--Stokes equations \eqref{NSE}.
\end{theorem}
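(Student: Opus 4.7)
The plan is the classical scheme for this kind of scaling limit: prove tightness of $\{Q^N\}$ in $C([0,T]; H^-)$, pass to any subsequential weak limit and identify it as a solution of the deterministic equation \eqref{NSE}, and then invoke uniqueness of \eqref{NSE} to conclude that the unique possible limit is $\delta_{\xi_\cdot}$, forcing weak convergence of the entire sequence.

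For tightness, the pathwise bound \eqref{energy estimate thm-existence} confines the trajectories to the ball $\{\|\xi\|_{L^2} \leq \|\xi_0\|_{L^2}\}$, which is metrizable and compact in $H^-$. For time regularity I would test \eqref{SEE-approx-formulation} against an arbitrary $\phi \in C^\infty(\T^2)$: the drift part is Lipschitz in $t$ with a constant depending only on $\|\xi_0\|_{L^2}$ and $\phi$, using $\|u^N_s\|_{L^2} \leq C \|\xi^N_s\|_{L^2}$ from Biot--Savart, while the martingale part $M^{N,\phi}_t$ satisfies, by the same calculation already shown before Definition \ref{SEE-def},
\[ \E \big[M^{N,\phi}\big]_T \leq \eps_N^2 \|\theta^N_\cdot\|_{\ell^\infty}^2 \|\nabla\phi\|_\infty^2 T \|\xi_0\|_{L^2}^2 = 4\nu T \|\nabla\phi\|_\infty^2 \|\xi_0\|_{L^2}^2 \, \frac{\|\theta^N_\cdot\|_{\ell^\infty}^2}{\|\theta^N_\cdot\|_{\ell^2}^2}, \]
which vanishes as $N \to \infty$ by \eqref{theta-N}. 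These bounds give uniform equicontinuity in probability of $\<\xi^N_\cdot, \phi\>$ and, together with the weak compactness in $H$ and a countable separating family of test functions, yield tightness in $C([0,T]; H^-)$.

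Once a subsequential limit $Q^{N_k} \Rightarrow Q^\infty$ is extracted, I would invoke Skorokhod's representation to obtain versions converging almost surely in $C([0,T]; H^-)$, still satisfying the $L^2$ bound. The martingale part drops out in the limit by the $L^2(\Omega)$ estimate above. The main obstacle is the quadratic nonlinearity $\int_0^t \<\xi^N_s, u^N_s \cdot \nabla\phi\>\,\d s = -\int_0^t \<u^N_s \xi^N_s, \nabla\phi\>\,\d s$, for which $H^-$-weak convergence alone is insufficient. To bypass this I would interpolate: the a.s.\ convergence in $C([0,T]; H^-)$ combined with the uniform $L^\infty(0,T; H)$ bound upgrades to strong convergence in $C([0,T]; H^{-\delta})$ for every $\delta > 0$, and Biot--Savart lifts this to strong convergence of $u^{N_k}$ in $C([0,T]; H^{1-\delta})$, in particular in $C([0,T]; L^2)$. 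Pairing this strong convergence with the weak convergence of $\xi^{N_k}$ in $L^2([0,T]\times \T^2)$ suffices to pass to the limit in the nonlinear term. The limit $\xi$ is then a weak solution of the deterministic Navier--Stokes equation \eqref{NSE} in $L^\infty(0,T; H)$ with initial datum $\xi_0$; by the classical 2D uniqueness cited after \eqref{NSE}, $\xi = \xi_\cdot$ almost surely, so $Q^\infty = \delta_{\xi_\cdot}$ and the whole sequence converges. The hardest part of the program is the compactness upgrade needed to treat the nonlinearity; the vanishing of the noise in the limit is essentially encoded in the quadratic-variation estimate above.
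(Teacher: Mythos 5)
Your proposal is correct, and its quantitative core coincides with the paper's: the pathwise enstrophy bound, the It\^o-isometry computation showing that the martingale part is of order $\eps_N\big\|\theta^N_\cdot\big\|_{\ell^\infty}$ in $L^2(\Omega)$ and hence vanishes by \eqref{theta-N}, the strong-(velocity)-times-weak-(vorticity) pairing for the nonlinear term via Biot--Savart smoothing, and uniqueness for 2D Navier--Stokes to pass from subsequential to full convergence. The implementation differs in two places. For tightness, the paper does not argue coordinate-wise equicontinuity directly; it proves the fourth-moment increment bound $\E\big(\big\<\xi^N_t-\xi^N_s,e_k\big\>^4\big)\leq C|k|^8|t-s|^2$ (Lemma \ref{lem-estimate 2}), sums it into a uniform $W^{1/3,4}\big(0,T;H^{-1-\beta}\big)$ estimate and invokes Simon's compact embedding into $C\big([0,T];H^{-\delta}\big)$. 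Your route via compact containment in the $L^2$-ball plus tightness of the real processes $\<\xi^N_\cdot,\phi\>$ for a countable separating family is equally valid, but note that for the martingale part you need the increment bound $\E\big([M^{N,\phi}]_t-[M^{N,\phi}]_s\big)\leq C|t-s|$ (which your computation does give), not merely the bound on $[M^{N,\phi}]_T$, in order to extract a Kolmogorov--Chentsov modulus. For the identification, the paper deliberately avoids Skorokhod representation: it notes that the laws are tight on $L^2\big(0,T;H^{-1}\big)\cap L^2_{R,w}$, where $L^2_{R,w}$ is the $R$-ball of $L^2(0,T;H)$ with the weak topology (metrizable and compact), shows that the drift part $F_\phi$ of \eqref{SEE-approx-formulation} is a continuous bounded functional on that space (Lemma \ref{lem continuity sec 4}), and applies the continuous mapping theorem to conclude $F_\phi(\xi)=0$ almost surely under the limit law. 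Both routes work; the paper's buys a Skorokhod-free argument at the cost of introducing the auxiliary space $L^2_{R,w}$, while yours is closer to the argument the paper itself uses for Theorem \ref{thm-existence} in Section \ref{sec3}. In either case one should, as you implicitly do, run the limit over a countable dense family of test functions before invoking uniqueness of \eqref{NSE}.
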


Theorem \ref{thm-main} also implies convergence of the associated advected passive scalars, see Corollary \ref{corollary sec4} for the precise statement.

\begin{remark}\label{rem-theta}
If $\xi_0 \in L^\infty(\T^2)$, then under slightly stronger conditions on $\theta_\cdot$ (e.g. assume $\theta_k \sim |k|^{-2-\delta}$ for some $\delta>0$), the equation \eqref{SEE-vort-1} has a unique solution in $L^\infty \big( [0,T] \times \T^2 \big)$, see for instance \cite[Theorem 2.10]{BrFlMa}. Note that in the approximating equations \eqref{SEE-approx-formulation}, we can take $\theta^N\in \ell^2$ such that there are only finitely many $k$ for which $\theta^N_k\neq 0$, and at the same time satisfying \eqref{theta-N}, for instance, $\theta^N_k = {\bf 1}_{\{|k|\leq N \}}$.  Therefore, if we approximate $\xi_0\in L^2(\T^2)$ by a sequence of bounded functions $\xi^N_0 \in L^\infty(\T^2)$, then the approximating sequence $\xi^N_\cdot\, (N\geq 1)$ are unique solutions of the equations \eqref{SEE-approx-formulation}. Moreover in this case we can consider the sequence $\xi^N_\cdot$ to be defined on the same probability space $(\Omega, \mathcal F, \P)$, again by the results in \cite{BrFlMa}; thus convergence in law to a deterministic limit implies also convergence in probability. The energy bound \eqref{energy estimate thm-existence} then also implies convergence in $L^p(\Omega, \mathbb{P})$, for any $p< \infty$.
\end{remark}

In Sections \ref{sec3} and \ref{sec4}, we prove Theorems \ref{thm-existence} and \ref{thm-main} respectively. Then in Section \ref{sec5} we show that the same result can be achieved, under the same scaling, already working with finite dimensional approximations of Galerkin type. Theorem \ref{thm sec5} below shows that, in some sense, the two limit procedures contained in Theorems \ref{thm-existence} and \ref{thm-main} can be united into a single approximation. Moreover, it provides explicit examples, for any $\xi_0\in H$, of H\"older continuous in time, spatially smooth, enstrophy-preserving functions which converge in $C([0,T];H^-)$ to the dissipative solution of 2D Navier--Stokes equation with initial data $\xi_0$. See Section \ref{subsec-anomalous} for further discussions. Denote by $\Pi_N$ the orthogonal projection of $L^2(\T^2)$ into $H_N=\text{span}\{e_k: k\in\mathbb{Z}^2_0, \vert k\vert\leq N \}$; we consider for each $N$ the solution $\tilde\xi^N$ of the SDE:
  \begin{equation}\label{SDE sec2}
  \d \tilde\xi^N = -\Pi_N\big( \big(K\ast \tilde\xi^N \big)\cdot\nabla\tilde\xi^N \big)\,\d t + \varepsilon_N \sum_{k\in\mathbb{Z}^2_0}\theta^N_k \Pi_N\big( \sigma_k\cdot\nabla \tilde\xi^N \big)\circ\d W^k, \quad \tilde\xi^N_0=\Pi_N \xi_0.
  \end{equation}
The variables $\big\{\tilde\xi^N \big\}_{N\in \N}$ are defined on the same probability space with respect to the same Brownian motions $\{W^k \}_{k\in\mathbb{Z}^2_0}$. In this case we can prove the following

\begin{theorem}\label{thm sec5}
Suppose the sequence $\big\{\theta^N \big\}_{N\geq 1}\subset \ell^2$ satisfies \eqref{theta}--\eqref{eps-N}, and the additional condition
\begin{equation}\label{condition thm5}
\lim_{N\to\infty} \big\Vert \theta^N_\cdot \big\Vert_{\ell^2}^{-2} \sum_{k: \vert k-j\vert>N} \big(\theta^N_k \big)^2 =0\quad \forall \, j\in\mathbb{Z}_0^2.
\end{equation}
Then the sequence $\big\{ \tilde\xi^N \big\}_{N\geq 1}$ converges in $C([0,T];H^-)$ to $\xi_\cdot$ in probability, where $\xi_\cdot$ is the unique solution of Navier--Stokes equation \eqref{NSE} with initial data $\xi_0$.
\end{theorem}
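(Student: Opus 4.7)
The plan is to mirror the proof strategy of Theorem \ref{thm-main}, exploiting three features afforded by the Galerkin setting: a pathwise enstrophy bound, the fact that all $\tilde\xi^N$ sit on the same probability space, and the extra hypothesis \eqref{condition thm5}. Equation \eqref{SDE sec2} is a finite-dimensional Stratonovich SDE on $H_N$ with smooth coefficients. Applying It\^o's formula to $\|\tilde\xi^N_t\|_{L^2}^2$ in the Stratonovich sense, both the projected transport drift and each noise term give zero contribution because $K\ast\tilde\xi^N$ and each $\sigma_k$ are divergence-free and $\Pi_N$ is self-adjoint. Consequently $\|\tilde\xi^N(t)\|_{L^2}=\|\Pi_N\xi_0\|_{L^2}\leq\|\xi_0\|_{L^2}$ pathwise, which yields both global existence and a uniform energy bound.

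The central technical task is converting \eqref{SDE sec2} to It\^o form and analysing the linear corrector
\[
L^N\psi := \frac{\varepsilon_N^2}{2}\sum_{k\in\Z^2_0}(\theta^N_k)^2\,\Pi_N\bigl(\sigma_k\cdot\nabla\Pi_N(\sigma_k\cdot\nabla\psi)\bigr).
\]
For a smooth test function $\phi$ with Fourier support in $\{|j|\leq M\}$ and $N>M$, two integrations by parts (using $\div\sigma_k=0$) together with self-adjointness of $\Pi_N$ yield the splitting
\[
\langle L^N\tilde\xi^N,\phi\rangle = \nu\,\langle\tilde\xi^N,\Delta\phi\rangle + E^N(\phi),\qquad E^N(\phi) := \frac{\varepsilon_N^2}{2}\sum_k(\theta^N_k)^2\,\langle\sigma_k\cdot\nabla\tilde\xi^N,(I-\Pi_N)(\sigma_k\cdot\nabla\phi)\rangle,
\]
the leading term arising from \eqref{useful-equality} together with the identity $\sigma_k\cdot\nabla\sigma_k\equiv 0$, which annihilates the first-order commutator. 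The factor $(I-\Pi_N)(\sigma_k\cdot\nabla\phi)$ is supported in Fourier modes $m=k\pm j$ with $|j|\leq M$ and $|m|>N$, forcing $|k\pm j|>N$; moreover, by orthogonality of Fourier modes only the low-frequency projection $\Pi_M\tilde\xi^N$ can contribute to the inner product. These two observations reduce the bound to
\[
|E^N(\phi)|\leq C_\phi\,\nu\,\|\xi_0\|_{L^2}\,\|\theta^N\|_{\ell^2}^{-2}\sum_{j:\,|j|\leq M}\sum_{|k-j|>N}(\theta^N_k)^2,
\]
which vanishes as $N\to\infty$ precisely by \eqref{condition thm5}. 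This is the only place where \eqref{condition thm5} enters, and it is the main obstacle that distinguishes the proof from that of Theorem \ref{thm-main}, where the absence of a projector in the noise made the corrector identification automatic.

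The remaining steps are standard. Tightness of the laws of $\tilde\xi^N$ in $C([0,T];H^-)$ follows from the uniform $L^\infty(0,T;H)$ bound combined with weak-in-time H\"older estimates on $\langle\tilde\xi^N_\cdot,\phi\rangle$ obtained by applying Burkholder's inequality to the martingale piece, whose quadratic variation is controlled by $\varepsilon_N^2\|\theta^N\|_{\ell^\infty}^2 = 4\nu\|\theta^N\|_{\ell^\infty}^2/\|\theta^N\|_{\ell^2}^2 \to 0$ thanks to \eqref{theta-N}. Passing to the limit along a subsequence (via Skorokhod representation if convenient), strong convergence of $u^N = K\ast\tilde\xi^N$ in $L^2(0,T;L^q)$, obtained from the compact embedding $V\hookrightarrow L^q$ together with the time-regularity estimates, handles the nonlinear drift; the corrector converges to $\nu\langle\xi,\Delta\phi\rangle$ by the analysis above; and the martingale disappears. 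The resulting limit $\xi$ solves \eqref{NSE}, which admits a unique solution by classical theory.

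Since the limit is the deterministic $\xi$ and all $\tilde\xi^N$ live on the same filtered probability space, weak convergence of laws to $\delta_\xi$ automatically upgrades to convergence in probability in $C([0,T];H^-)$, which is exactly the assertion of the theorem.
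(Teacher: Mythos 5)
Your proposal is correct and follows essentially the same route as the paper: pathwise enstrophy conservation via the Stratonovich chain rule, identification of the It\^o--Stratonovich corrector as $\nu\Delta$ plus an error coming from the projector $\Pi_N^\perp$ hitting $\sigma_k\cdot\nabla\phi$ (whose Fourier modes sit at $k\pm j$, so the error is controlled by $\sum_{|k-j|>N}(\theta^N_k)^2$ and killed by \eqref{condition thm5}), then tightness, limit identification as in Section \ref{sec4}, and the upgrade from convergence in law to convergence in probability because the limit is deterministic. The paper packages the corrector as the convolution operator $C_N\phi=(\Pi_N A_N)\ast\nabla^2\phi$ acting on the test function and splits via $\Pi_N+\Pi_N^\perp=\delta$, but this is the same computation as your decomposition of $\langle L^N\tilde\xi^N,\phi\rangle$.
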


\begin{remark}
\begin{itemize}
\item[\rm (i)] It can be checked for instance that condition \eqref{condition thm5} is satisfied for $\big\{\theta^N_\cdot \big\}_N$ given by $\theta^N_k = \vert k\vert^{-\alpha}\,{\bf 1}_{\{\vert k\vert\leq N \}}$, for any $\alpha\in [0,1]$.
\item[\rm (ii)] The dissipative nature of the limit equation \eqref{NSE} implies that convergence in $C([0,T]; H)$ cannot hold and, therefore, that higher Sobolev norms of such sequences must explode; see also Remark \ref{rem end sec5}.
\end{itemize}
\end{remark}

\section{Existence of solutions to \eqref{SEE-formulation}}\label{sec3}

In this section we give a proof of Theorem \ref{thm-existence} by using the Galerkin approximation and the compactness method.

To use the method of Galerkin approximation, we introduce some notations. For $N\geq 1$, let $H_N={\rm span}\{e_k: k\in \Z^2_0, |k|\leq N\}$ which is a finite dimensional subspace of $H$. Denote by $\Pi_N: H\to H_N$ the orthogonal projection: $\Pi_N \xi= \sum_{|k|\leq N} \<\xi, e_k\> e_k$. $\Pi_N$ can also act on vector valued functions. Let
  $$b_N(\xi) = \Pi_N\big( (K\ast \Pi_N\xi) \cdot \nabla(\Pi_N\xi) \big), \quad  G_N^k(\xi)= \Pi_N\big( \sigma_k \cdot \nabla(\Pi_N\xi) \big),\quad k\in \Z^2_0.$$
Note that, for fixed $N$, there are only finitely many $k\in \Z^2_0$ such that $G_N^k$ is not zero. We shall view $b_N$ and $G_N^k$ as vector fields on $H_N$ whose generic element is denoted by $\xi_N$. These vector fields have the following useful properties:
  \begin{equation}\label{properties}
  \big\< b_N(\xi_N), \xi_N \big\> = \big\< G_N^k(\xi_N), \xi_N \big\>=0\quad \mbox{for all } \xi_N\in H_N,
  \end{equation}
which can be proved easily from the definitions of $b_N$ and $G_N^k$, and the integration by parts formula.
Consider the finite dimensional version of \eqref{SEE-vort-2} on $H_N$:
  \begin{equation}\label{SDE}
  \d\xi_N(t)= -b_N(\xi_N(t))\,\d t + \nu \Delta \xi_N(t)\,\d t + \eps \sum_{k\in \Z^2_0} \theta_k G_N^k(\xi_N(t)) \, \d W^{k}_t, \quad \xi_N(0)= \Pi_N\xi_0,
  \end{equation}
where $\xi_0\in H$ is the initial condition in Theorem \ref{thm-existence}. We remark that the sum over $k$ is a finite sum. Its generator is
  $$\L_N \varphi(\xi_N) = \< - b_N(\xi_N) +\nu \Delta \xi_N, \nabla_N \varphi(\xi_N)\> + \frac{\eps^2}2 \sum_{k\in \Z^2_0} \theta_k^2 \, {\rm Tr}\big[ \big(G_N^k \otimes G_N^k \big) \nabla_N^2 \varphi \big](\xi_N)$$
for any $\varphi\in C_b^2(H_N)$, where $\nabla_N$ is the gradient operator on $H_N$.

\begin{lemma}\label{energy-estimate}
The equation \eqref{SDE} has a unique strong solution $\xi_N(t)$ satisfying
  $$\sup_{t\in [0,T]} \|\xi_N(t) \|_{L^2} \leq \|\xi_N(0) \|_{L^2} \quad \P \mbox{\rm-}a.s.$$
\end{lemma}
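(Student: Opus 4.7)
The plan is to combine standard finite-dimensional SDE theory with a pathwise energy identity that exploits the algebraic structure \eqref{properties} together with the Itô--Stratonovich compensation built into the scaling \eqref{epsilon}.

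First I would address local existence and uniqueness. Since $\xi_N$ lives in the finite-dimensional space $H_N$ (on which all Sobolev norms are equivalent), the drift $-b_N(\xi_N)+\nu\Delta\xi_N$ is polynomial (quadratic) in $\xi_N$ and each $G_N^k$ is linear in $\xi_N$, so the coefficients of \eqref{SDE} are smooth and locally Lipschitz. Classical SDE theory then yields a unique strong solution defined up to some (possibly finite) explosion time $\tau_N=\lim_{R\to\infty}\tau_R$, where $\tau_R=\inf\{t\geq 0: \|\xi_N(t)\|_{L^2}\geq R\}$.

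Next I would establish the a priori bound on the stopped process by Itô's formula applied to $\|\xi_N(t\wedge\tau_R)\|_{L^2}^2$. The key cancellations are:
\begin{itemize}
\item[(i)] the nonlinear drift term contributes $-2\langle b_N(\xi_N),\xi_N\rangle\,\d t=0$ by \eqref{properties};
\item[(ii)] the stochastic integral $2\varepsilon\sum_k\theta_k\langle G_N^k(\xi_N),\xi_N\rangle\,\d W^k$ vanishes identically, again by \eqref{properties}, so no martingale term survives;
\item[(iii)] the Laplacian contributes $-2\nu\|\nabla\xi_N\|_{L^2}^2\,\d t$;
\item[(iv)] the Itô correction is $\varepsilon^2\sum_k\theta_k^2\|G_N^k(\xi_N)\|_{L^2}^2\,\d t$.
\end{itemize}
For (iv), since $\Pi_N$ is an orthogonal projection, $\|G_N^k(\xi_N)\|_{L^2}^2\leq\|\sigma_k\cdot\nabla\xi_N\|_{L^2}^2$, and \eqref{useful-equality} gives
\begin{equation*}
\sum_{k\in\Z^2_0}\theta_k^2\|\sigma_k\cdot\nabla\xi_N\|_{L^2}^2
=\int_{\T^2}\nabla\xi_N\cdot\Bigl(\sum_{k\in\Z^2_0}\theta_k^2\,\sigma_k\otimes\sigma_k\Bigr)\nabla\xi_N\,\d x
=\tfrac12\|\theta_\cdot\|_{\ell^2}^2\,\|\nabla\xi_N\|_{L^2}^2.
\end{equation*}
Combined with $\varepsilon^2=4\nu/\|\theta_\cdot\|_{\ell^2}^2$, the Itô correction is bounded by $2\nu\|\nabla\xi_N\|_{L^2}^2\,\d t$, which is exactly cancelled by the dissipation in (iii). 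Collecting everything yields the pathwise differential inequality
\begin{equation*}
\d\|\xi_N(t)\|_{L^2}^2=\Bigl[-2\nu\|\nabla\xi_N(t)\|_{L^2}^2+\varepsilon^2\sum_{k\in\Z^2_0}\theta_k^2\|G_N^k(\xi_N(t))\|_{L^2}^2\Bigr]\d t\leq 0,
\end{equation*}
so that $\|\xi_N(t\wedge\tau_R)\|_{L^2}\leq\|\xi_N(0)\|_{L^2}$ almost surely.

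Finally, this a priori bound forbids explosion: for any $R>\|\xi_N(0)\|_{L^2}$ the stopping time $\tau_R$ is almost surely $+\infty$, hence $\tau_N=+\infty$, and the stated pathwise energy estimate follows by letting $R\to\infty$. The main point to be careful about is that the vanishing of the martingale part in (ii), made possible by \eqref{properties}, is what upgrades a standard expectation-level supermartingale argument to a genuinely pathwise bound; the rest is a routine combination of the cancellation identities \eqref{properties}--\eqref{useful-equality} with the precise value of $\varepsilon$ in \eqref{epsilon}.
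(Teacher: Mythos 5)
Your proposal is correct and follows essentially the same route as the paper: smoothness of the finite-dimensional coefficients for local well-posedness, Itô's formula for $\|\xi_N\|_{L^2}^2$ with the cancellations from \eqref{properties}, the projection bound $\|G_N^k(\xi_N)\|_{L^2}\leq\|\sigma_k\cdot\nabla\xi_N\|_{L^2}$, and the exact compensation of the Itô correction by the viscous term via \eqref{useful-equality} and \eqref{epsilon}. The only difference is that you spell out the stopping-time/non-explosion step that the paper leaves implicit.
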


\begin{proof}
The vector fields $b_N$ and $G^k_N$ are respectively quadratic and linear on the finite dimensional space $H_N$, therefore they are smooth. By the standard SDE theory, local existence and uniqueness of strong solutions to \eqref{SDE} holds for any initial data. By the It\^o formula,
  \begin{equation}\label{energy-estimate-1}
  \aligned
  \d\|\xi_N(t) \|_{L^2}^2=&\ -2 \<\xi_N(t), b_N(\xi_N(t))\>\,\d t +2\nu \<\xi_N(t), \Delta \xi_N(t) \> \,\d t\\
  &\ + 2\,\eps \sum_{k\in \Z^2_0} \theta_k \big\<\xi_N(t), G_N^k(\xi_N(t)) \big\>\, \d W^{k}_t+ \eps^2 \sum_{k\in \Z^2_0} \theta_k^2 \big\| G_N^k(\xi_N(t))\big\|_{L^2}^2\,\d t.
  \endaligned
  \end{equation}
The first and the third terms on the right hand side vanish due to \eqref{properties}. Moreover, noting that $\Pi_N:H \to H_N$ is an orthogonal projection,
  \begin{equation}\label{energy-estimate-2}
  \big\| G_N^k(\xi_N(t))\big\|_{L^2} = \big\| \Pi_N( \sigma_k \cdot \nabla \xi_N(t))\big\|_{L^2} \leq \| \sigma_k \cdot \nabla \xi_N(t)\|_{L^2}.
  \end{equation}
Therefore,
  $$\eps^2 \sum_{k\in \Z^2_0} \theta_k^2 \big\| G_N^k(\xi_N(t))\big\|_{L^2}^2 \leq \eps^2 \sum_{k\in \Z^2_0} \theta_k^2 \int_{\T^2} (\sigma_k \cdot \nabla \xi_N(t))^2 \,\d x = 2\nu \| \nabla \xi_N(t) \|_{L^2}^2, $$
where the last equality is due to \eqref{useful-equality} and \eqref{epsilon}. Combining these results with \eqref{energy-estimate-1} we obtain $\d \|\xi_N(t) \|_{L^2}^2 \leq 0$, which implies the desired inequality and also the global existence of solution to \eqref{SDE}.
\end{proof}

\begin{remark}
Due to the inequality \eqref{energy-estimate-2}, the solution of \eqref{SDE} does not preserve the $L^2$-norm, unlike the equations \eqref{SDE sec2}. Therefore, although \eqref{SEE-vort-2} is formally equivalent to the conservative Stratonovich equation \eqref{SEE-vort-1}, as the Galerkin approximation of \eqref{SEE-vort-2}, the equation \eqref{SDE} is no longer conservative.
\end{remark}

Lemma \ref{energy-estimate} shows that $\{\xi_N(\cdot)\}_{N\geq 1}$ is bounded in $L^p \big(\Omega,L^p(0,T; H) \big)$ for any $p>2$:
  \begin{equation}\label{key-bound}
  \E \int_0^T \|\xi_N(t) \|_{L^2}^p \,\d t \leq T\|\xi_N(0) \|_{L^2}^p \leq T\|\xi_0 \|_{L^2}^p.
  \end{equation}
Thus we can find a weakly convergent subsequence. Denote by $u_N = K\ast \xi_N,\, N\geq 1$; then $\{u_N(\cdot)\}_{N\geq 1}$ is bounded in $L^2 \big(\Omega, L^2(0,T; V) \big)$. In order to pass to the limit in the nonlinear term, we need $u_N$ to be strongly convergent in $L^2 \big(\Omega,L^2(0,T; H) \big)$. In fact, we will show that the laws $\eta_N$ of $u_N(\cdot)$ are tight in $C\big([0,T], H^{1-}(\T^2) \big)$. To this end we first recall the compactness result by J. Simon \cite[Corollary 9, p.90]{Simon}.

Take any $\delta\in (0,1)$ small enough and $\beta> 4$ (this choice is due to computations below the proof of Lemma \ref{lem-estimate}). We have the compact inclusions
  $$V= H^1 \subset H^{1-\delta} \subset H^{-\beta},$$
and there exists $C>0$ such that
  $$\|f\|_{H^{1-\delta}} \leq C \|f\|_V^{1-\kappa} \|f\|_{H^{-\beta}}^{\kappa}, \quad f\in V,$$
where $\kappa = \delta/(1+\beta)$. Recall that, for $\alpha\in (0,1)$, $p>1$ and a normed linear space $(Y, \|\cdot \|_Y)$, the fractional Sobolev space $W^{\alpha, p}(0,T; Y)$ is defined as those functions $f\in L^p(0,T; Y)$ such that
  $$\int_0^T\! \int_0^T \frac{\|f(t)-f(s) \|_Y^p}{|t-s|^{1+ \alpha p}}\,\d t\d s< +\infty. $$
The next result follows from \cite[Corollary 9, p.90]{Simon}.

\begin{theorem} \label{thm-simon}
Let $\beta>4$ be given. If $p> 12 (1+\beta -\delta)/\delta$, then
  $$L^p(0,T; V) \cap W^{1/3, 4} \big(0,T; H^{-\beta} \big) \subset C\big( [0,T]; H^{1-\delta} \big)$$
with compact inclusion.
\end{theorem}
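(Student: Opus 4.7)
The plan is to derive Theorem \ref{thm-simon} as a direct specialization of Simon's general compactness result \cite[Corollary 9, p.90]{Simon}, applied with the triple of spaces $(X,B,Y)=(V,\, H^{1-\delta},\, H^{-\beta})$. First I would record the two structural ingredients needed by that corollary: the embedding $V=H^1\subset\subset H^{1-\delta}$ is compact by Rellich--Kondrachov on $\T^2$, and the interpolation inequality $\|f\|_{H^{1-\delta}}\leq C\|f\|_V^{1-\kappa}\|f\|_{H^{-\beta}}^{\kappa}$ with $\kappa=\delta/(1+\beta)$, already stated in the excerpt, provides the quantitative link between the three spaces.

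Next I would set up the two time-regularity pieces. On the $Y$-side, the Sobolev embedding in time $W^{1/3,4}(0,T;Y)\hookrightarrow C^{1/3-1/4}(0,T;Y)=C^{1/12}(0,T;Y)$ supplies, on any bounded family, a uniform Hölder modulus $\|f(t)-f(s)\|_{H^{-\beta}}\lesssim |t-s|^{1/12}$. On the $V$-side only an $L^p(0,T;V)$ bound is available, so the pointwise values $\|f(t)\|_V$ cannot be controlled directly. To reconcile the two, Simon's argument averages $f$ over small time windows and combines the $L^p$-$V$-control of such averages with the Hölder-$H^{-\beta}$-modulus through the interpolation above, yielding equicontinuity into $H^{1-\delta}$. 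Together with pointwise relative compactness (from $V\subset\subset H^{1-\delta}$ and the $L^p$-$V$-bound), Ascoli--Arzelà then delivers relative compactness in $C([0,T];H^{1-\delta})$.

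I would then verify that the threshold $p>12(1+\beta-\delta)/\delta$ is exactly the arithmetic requirement that the interpolation produces a strictly positive Hölder exponent in $H^{1-\delta}$. Heuristically, distributing $|t-s|^{1/12}$ through the factor $\|\cdot\|_{H^{-\beta}}^{\kappa}$ yields $|t-s|^{\kappa/12}$, while the factor $\|\cdot\|_V^{1-\kappa}$ is controlled by an $L^p$-average over a window of size $|t-s|$, costing $|t-s|^{-(1-\kappa)/p}$. Net positivity of the resulting exponent is equivalent to $(1-\kappa)/p<\kappa/12$, which after substituting $\kappa=\delta/(1+\beta)$ reduces precisely to $p>12(1+\beta-\delta)/\delta$.

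The main (and essentially only) obstacle is the bookkeeping: matching the hypotheses of \cite[Corollary 9, p.90]{Simon} to the present triple of spaces with exponents $s=1/3$, $r=4$ and $\kappa=\delta/(1+\beta)$. Once this is checked, no further analytical input is needed; the statement requires nothing beyond the embedding chain and interpolation already recorded above.
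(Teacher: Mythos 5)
Your proposal is correct and matches the paper's treatment: the paper proves Theorem \ref{thm-simon} simply by invoking \cite[Corollary 9, p.90]{Simon} with the triple $V\subset H^{1-\delta}\subset H^{-\beta}$, the interpolation exponent $\kappa=\delta/(1+\beta)$, and the same arithmetic check $\kappa/3-\kappa/4-(1-\kappa)/p>0$, which is exactly $p>12(1+\beta-\delta)/\delta$. Your additional sketch of the mechanics behind Simon's corollary is a faithful account of its proof but is not needed beyond the citation.
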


If we can prove that $\{\eta_N \}_{N\in \N}$ are tight on $C\big( [0,T]; H^{1-\delta} \big)$ for any $\delta\in (0,1)$, then the tightness of $\{\eta_N \}_{N\in \N}$ on $C\big([0,T], H^{1-}(\T^2) \big)$ follows immediately.

To show the tightness of $\{\eta_N\}_{N\geq 1}$ on $ C\big( [0,T]; H^{1-\delta} \big)$, by Theorem \ref{thm-simon}, it is sufficient to prove, for each $N\geq 1$,
  \begin{equation}\label{tightness-estimate-1}
  \E \int_0^T \|u_N(t)\|_V^p\, \d t + \E \int_0^T\! \int_0^T \frac{\|u_N(t) - u_N(s)\|_{H^{-\beta}}^4}{|t-s|^{7/3}}\,\d t\d s \leq C.
  \end{equation}
By \eqref{key-bound}, we immediately get the uniform boundedness of $\{u_N(\cdot)\}_{N\geq 1}$ in $L^p \big(\Omega, L^p(0,T; V) \big)$. It remains to estimate the second expected value.

\begin{lemma}\label{lem-estimate}
There is a constant $C>0$ such that for any $N\geq 1$ and $0\leq s<t\leq T$,
  $$\E \big(\<\xi_N(t) - \xi_N(s), e_k\>^4 \big) \leq C |k|^8 |t-s|^2 \quad \mbox{for all } k\in \Z^2_0.$$
\end{lemma}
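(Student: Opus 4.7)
The plan is to project the SDE \eqref{SDE} onto the mode $e_k$ and estimate the three resulting contributions separately. If $|k|>N$ then $\Pi_N e_k=0$ and $\<\xi_N(t),e_k\>\equiv 0$, so the inequality is trivial; we therefore focus on $|k|\leq N$, in which case $\Pi_N e_k=e_k$ and all projections can be dropped when testing against $e_k$. Writing
\[
\<\xi_N(t)-\xi_N(s),e_k\>=A_{s,t}+B_{s,t}+M_{s,t},
\]
with
\[
A_{s,t}=-\!\int_s^t\!\<b_N(\xi_N(r)),e_k\>\,\d r,\quad B_{s,t}=\nu\!\int_s^t\!\<\Delta\xi_N(r),e_k\>\,\d r,\quad M_{s,t}=\eps\sum_j\theta_j\!\int_s^t\!\<G_N^j(\xi_N(r)),e_k\>\,\d W^j_r,
\]
convexity gives $(A+B+M)^4\leq 27(A^4+B^4+M^4)$, so it suffices to bound each piece.

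For the nonlinear drift, integration by parts (using $\div(K\ast\xi_N)=0$) yields
\[
\<b_N(\xi_N),e_k\>=-\<\xi_N,(K\ast\xi_N)\cdot\nabla e_k\>,
\]
and combining $\|K\ast\xi_N\|_{L^2}\lesssim\|\xi_N\|_{L^2}$, $\|\nabla e_k\|_{L^\infty}\lesssim |k|$ with the energy bound $\|\xi_N(r)\|_{L^2}\leq\|\xi_0\|_{L^2}$ from Lemma \ref{energy-estimate} gives $|A_{s,t}|\lesssim |k|(t-s)$ and hence $A_{s,t}^4\lesssim |k|^4(t-s)^2$ (absorbing two powers of $(t-s)$ into the finite constant $T^2$). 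For the Laplacian term, the identity $\<\Delta\xi_N,e_k\>=-4\pi^2|k|^2\<\xi_N,e_k\>$ together with the energy bound yields $|B_{s,t}|\lesssim |k|^2(t-s)$, whence $B_{s,t}^4\lesssim |k|^8(t-s)^2$; this is the term which produces the $|k|^8$ factor in the final estimate.

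For the stochastic term I would apply the Burkholder--Davis--Gundy inequality, so that $\E[M_{s,t}^4]\lesssim\E[\langle M\rangle_{s,t}^2]$ where
\[
\langle M\rangle_{s,t}=\eps^2\sum_j\theta_j^2\int_s^t\<\xi_N(r),\sigma_j\cdot\nabla e_k\>^2\,\d r.
\]
The key observation is that, viewing $\xi_N(r)\nabla e_k$ as an element of $L^2(\T^2,\R^2)$ and using Bessel's inequality for the orthonormal system $\{\sigma_j\}$,
\[
\sum_j\<\xi_N(r),\sigma_j\cdot\nabla e_k\>^2\leq\|\xi_N(r)\nabla e_k\|_{L^2}^2\leq\|\nabla e_k\|_{L^\infty}^2\,\|\xi_N(r)\|_{L^2}^2\lesssim |k|^2\|\xi_0\|_{L^2}^2.
\]
Estimating $\eps^2\theta_j^2\leq\eps^2\|\theta_\cdot\|_{\ell^\infty}^2\leq\eps^2\|\theta_\cdot\|_{\ell^2}^2=4\nu$ on the prefactor (so the scaling \eqref{epsilon} plays its usual compensating role) gives the pathwise bound $\langle M\rangle_{s,t}\lesssim |k|^2(t-s)$, and therefore $\E[M_{s,t}^4]\lesssim |k|^4(t-s)^2$.

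Summing the three contributions produces the claimed $C|k|^8(t-s)^2$ bound. The only mildly delicate step is the stochastic one, where it is important to sum over $j$ before exploiting \eqref{epsilon}: bounding $\theta_j^2$ by $\|\theta_\cdot\|_{\ell^2}^2$ termwise would lose the crucial gain provided by orthonormality of $\{\sigma_j\}$ and produce a suboptimal dependence on $N$. Everything else is a routine application of Jensen's inequality, integration by parts and the uniform energy estimate from Lemma \ref{energy-estimate}.
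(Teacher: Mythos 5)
Your proposal is correct and follows essentially the same route as the paper: split the increment into drift, Laplacian and martingale parts via the weak formulation, bound the first two pathwise using the energy estimate of Lemma \ref{energy-estimate} (with the Laplacian term supplying the $|k|^8$), and control the martingale by Burkholder together with Bessel's inequality for the orthonormal system $\{\sigma_j\}$ and the relation $\eps\|\theta_\cdot\|_{\ell^\infty}\le\eps\|\theta_\cdot\|_{\ell^2}=2\sqrt{\nu}$. No substantive differences from the paper's argument.
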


\begin{proof}
It is enough to consider $|k|\leq N$. By \eqref{SDE}, we have
  \begin{equation}\label{lem-estimate-1}
  \aligned
  \<\xi_N(t) - \xi_N(s), e_k\> =&\, \int_s^t \<\xi_N(r), u_N(r)\cdot \nabla e_k\>\,\d r + \nu \int_s^t \<\xi_N(r), \Delta e_k\>\,\d r \\
  & - \eps \sum_{l\in \Z^2_0} \theta_l \int_s^t \<\xi_N(r), \sigma_l\cdot \nabla e_k\>\,\d W^l_r.
  \endaligned
  \end{equation}
Using the H\"older inequality and Lemma \ref{energy-estimate}, we obtain
  $$\aligned
  \E \bigg( \Big| \int_s^t \<\xi_N(r), u_N(r)\cdot \nabla e_k\>\,\d r \Big|^4 \bigg) &\leq |t-s|^3\, \E \int_s^t \<\xi_N(r), u_N(r)\cdot \nabla e_k\>^4\,\d r \\
  & \leq |t-s|^3\, \E \int_s^t \|\xi_N(r)\|_{L^2}^4 \| u_N(r) \|_{L^2}^4 \|\nabla e_k\|_\infty^4 \,\d r \\
  &\leq C \|\xi_0\|_{L^2}^8 |k|^4 |t-s|^4,
  \endaligned$$
where the last step is due to the fact $\nabla e_k = 2\pi k e_{-k}$. In the same way, since $\Delta e_k=- 4\pi^2\vert k\vert^2\,e_k$,
  $$\E \bigg( \Big| \int_s^t \<\xi_N(r), \Delta e_k\>\,\d r \Big|^4 \bigg) \leq C \|\xi_0\|_{L^2}^4 |k|^8 |t-s|^4.$$
Next, by Burkholder's inequality,
  $$\aligned
  \E \bigg( \Big| \eps \sum_{l\in \Z^2_0} \theta_l \int_s^t \<\xi_N(r), \sigma_l\cdot \nabla e_k\>\,\d W^l_r \Big|^4 \bigg) &\leq C \eps^4\, \E \bigg( \Big| \sum_{l\in \Z^2_0} \theta_l^2 \int_s^t \<\xi_N(r), \sigma_l\cdot \nabla e_k\>^2\,\d r \Big|^2 \bigg) .
  \endaligned $$
We have
  $$\aligned
  \sum_{l\in \Z^2_0} \theta_l^2 \<\xi_N(r), \sigma_l\cdot \nabla e_k\>^2 &\leq \|\theta \|_{\ell^\infty}^2 \sum_{l\in \Z^2_0} \<\xi_N(r)\nabla e_k, \sigma_l\>^2 \\
  &\leq \|\theta \|_{\ell^\infty}^2 \|\xi_N(r)\nabla e_k \|_{L^2}^2 \leq C\|\theta \|_{\ell^\infty}^2 |k|^2 \|\xi_0\|_{L^2}^2,
  \endaligned$$
where we have used the fact that $\{\sigma_l\}_{l\in \Z^2_0}$ is an orthonormal family. Therefore,
  $$\E \bigg( \Big| \eps \sum_{l\in \Z^2_0} \theta_l \int_s^t \<\xi_N(r), \sigma_l\cdot \nabla e_k\>\,\d W^l_r \Big|^4 \bigg) \leq C\eps^4 \|\theta \|_{\ell^\infty}^4 |k|^4 \|\xi_0\|_{L^2}^4 |t-s|^2 \leq C' |k|^4 |t-s|^2. $$
Combining the above estimates with \eqref{lem-estimate-1} we finally get the desired inequality.
\end{proof}

Using the above estimate and Cauchy's inequality,
  $$\aligned
  \E \big[\|\xi_N(t) - \xi_N(s)\|_{H^{-\beta-1}}^4 \big] &= \E \Bigg[\sum_{k\in \Z^2_0 } \frac{\<\xi_N(t) - \xi_N(s), e_k\>^2 } {|k|^{2(\beta +1)}} \Bigg]^2\\
  &\leq \Bigg[\sum_{k\in \Z^2_0 } \frac{1 } {|k|^{2(\beta +1)}} \Bigg] \Bigg[\sum_{k\in \Z^2_0 } \frac{\E \big( \<\xi_N(t) - \xi_N(s), e_k\>^4 \big) } {|k|^{2(\beta +1)}} \Bigg] \\
  &\leq  C|t-s|^2 \sum_{k\in \Z^2_0 } \frac1{|k|^{2(\beta +1) -8}} \leq C'|t-s|^2,
  \endaligned$$
since $\beta>4$. Consequently,
  $$\E \big[\|u_N(t) - u_N(s)\|_{H^{-\beta}}^4 \big] \leq C'|t-s|^2,$$
which implies
  $$\E \int_0^T\! \int_0^T \frac{\|u_N(t) - u_N(s)\|_{H^{-\beta}}^4}{|t-s|^{7/3}}\,\d t\d s \leq C .$$
Thus we have proved \eqref{tightness-estimate-1} and we obtain the tightness of $\{\eta_N\}_{N\geq 1}$ on $ C\big( [0,T]; H^{1-} \big)$. Equivalently, we have proved the tightness of the laws $\bar\eta_N$ of $\xi_N\, (N\geq 1)$ on $\mathcal X:= C\big( [0,T]; H^{-} \big)$.

Since we are dealing with the SDEs \eqref{SDE}, we need to consider $\bar\eta_N$ together with the laws of Brownian motions $\big\{ (W^k_t)_{0\leq t\leq T}: k\in \Z^2_0 \big\}$. To this end, we endow $\R^{\Z^2_0}$ with the metric
  $$d_\infty(a,b)= \sum_{k\in \Z^2_0} \frac{|a_k-b_k| \wedge 1}{2^{|k|}}, \quad a,b \in \R^{\Z^2_0}.$$
Then $\big( \R^{\Z^2_0}, d_\infty(\cdot, \cdot) \big)$ is separable and complete (see \cite[Example 1.2, p.9]{Billingsley}). The distance in $\mathcal Y:= C\big([0,T], \R^{\Z^2_0} \big)$ is given by
  $$d_{\mathcal Y}(w,\hat w) = \sup_{t\in [0,T]} d_\infty(w(t), \hat w(t)),\quad w, \hat w \in \mathcal Y,$$
which makes $\mathcal Y$ a Polish space. Denote by $\mathcal W$ the law on $\mathcal Y$ of the sequence of independent Brownian motions $\big\{ (W^k_t)_{0\leq t\leq T}: k\in \Z^2_0 \big\}$.

To simplify the notations, we write $W_\cdot= (W_t)_{0\leq t\leq T}$ for the whole sequence of processes $\big\{ (W^k_t)_{0\leq t\leq T}: k\in \Z^2_0 \big\}$ in $\mathcal Y$. For any $N\geq 1$, denote by $P_N$ the joint law of $(\xi_N(\cdot), W_\cdot )$ on
  $$\mathcal X \times \mathcal Y = C\big( [0,T]; H^{-} \big)\times C\big([0,T], \R^{\Z^2_0} \big).$$
Since the marginal laws $\{ \bar\eta_N \}_{N\in \N}$ and $\{\mathcal W\}$ are respectively tight on $\mathcal X$ and $\mathcal Y$, we conclude that $\{ P_N \}_{N\in \N}$ is tight on $\mathcal X \times \mathcal Y$. The Prohorov theorem (see \cite[Theorem 5.1, p.59]{Billingsley}) implies that there exists a subsequence $\{N_i\}_{i\in \N}$ such that $P_{N_i}$ converge weakly as $i\to \infty$ to some probability measure $P$ on $\mathcal X \times \mathcal Y$. By Skorokhod's representation theorem (\cite[Theorem 6.7, p.70]{Billingsley}), there exist a probability space $\big(\tilde\Omega, \tilde{\mathcal F}, \tilde \P \big)$, and stochastic processes $\big(\tilde \xi_{N_i}(\cdot), \tilde W^{N_i}_\cdot \big)_{i\in \N}$ and $\big(\tilde \xi(\cdot), \tilde W_\cdot \big)$ on this space with the corresponding laws $P_{N_i}$ and $P$ respectively, such that $\big(\tilde \xi_{N_i}(\cdot), \tilde W^{N_i}_\cdot \big)$ converge $\tilde\P$-a.s. in $\mathcal X\times \mathcal Y$ to the limit $\big(\tilde \xi(\cdot), \tilde W_\cdot \big)$. We are going to prove that $\big(\tilde \xi(\cdot), \tilde W_\cdot \big)$ is a weak solution to the equation \eqref{SEE-formulation}.

Denote by $\tilde u_{N_i} = K\ast \tilde \xi_{N_i}$ and $\tilde u = K\ast \tilde \xi$ which are the velocity fields defined on the new probability space $\big(\tilde\Omega, \tilde{\mathcal F}, \tilde \P \big)$. By the above discussions, we know that
  \begin{equation}\label{strong-convergence}
  \tilde\P \mbox{-a.s.}, \quad \tilde \xi_{N_i}(\cdot) \mbox{ converge strongly to } \tilde \xi(\cdot) \mbox{ in } C([0,T]; H^{-}),
  \end{equation}
which implies that
  \begin{equation}\label{strong-convergence-1}
  \tilde\P \mbox{-a.s.}, \quad \tilde u_{N_i}(\cdot) \mbox{ converge strongly to } \tilde u(\cdot) \mbox{ in } C([0,T]; H^{1-}).
  \end{equation}
The new processes $\tilde \xi_{N_i}(\cdot)$ (resp. $\tilde u_{N_i}(\cdot)$) have the same law with $\xi_{N_i}(\cdot)$ (resp. $u_{N_i}(\cdot)$), and thus by Lemma \ref{energy-estimate}, we have
  \begin{equation}\label{sect-3.1}
  \sup_{t\in [0,T]} \big\Vert \nabla^\perp\cdot \tilde u_{N_i}(t) \big\Vert_{L^2}
  = \sup_{t\in [0,T]} \big\Vert \tilde\xi_{N_i}(t) \big\Vert_{L^2}
  \leq \Vert \xi_0 \Vert_{L^2}\quad \tilde\P \mbox{-a.s.}
  \end{equation}

\begin{lemma}\label{lem-bddness}
The process $\tilde\xi$ has $\tilde{\mathbb{P}}$-a.s. weakly continuous trajectories in $L^2$ and satisfies
\begin{equation}\label{estim-final}
\sup_{t\in [0,T]} \Vert \tilde\xi(t)\Vert_{L^2}\leq \Vert \xi_0\Vert_{L^2}\quad \tilde{\mathbb{P}}\text{-a.s.}
\end{equation}
\end{lemma}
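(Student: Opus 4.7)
The plan is to leverage the uniform estimate \eqref{sect-3.1} together with the almost-sure convergence $\tilde\xi_{N_i}\to\tilde\xi$ in $C([0,T];H^-)$, working on the full-measure event where both hold. First I fix an arbitrary $t\in[0,T]$: since $\{\tilde\xi_{N_i}(t)\}_i$ is bounded in $H$ by $\|\xi_0\|_{L^2}$, Banach--Alaoglu yields a subsequence converging weakly in $H$ to some $\eta$. For every $\phi\in C^\infty(\T^2)$, the weak $H$-convergence and the strong $H^-$-convergence both compute $\lim_k\<\tilde\xi_{N_{i_k}}(t),\phi\>$, forcing $\eta=\tilde\xi(t)$ as distributions. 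Hence $\tilde\xi(t)\in H$ with $\|\tilde\xi(t)\|_{L^2}\leq\liminf_k\|\tilde\xi_{N_{i_k}}(t)\|_{L^2}\leq\|\xi_0\|_{L^2}$ by weak lower semicontinuity. Taking a countable intersection over $t$ in $D:=(\mathbb{Q}\cap[0,T])\cup\{T\}$ gives a single full-measure event on which the bound holds simultaneously at all $t\in D$.

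Next I upgrade $C([0,T];H^-)$-continuity to weak continuity in $H$. For $\phi\in C^\infty(\T^2)$, $t\mapsto\<\tilde\xi(t),\phi\>$ is continuous on $[0,T]$ as the uniform-in-$t$ limit in $H^-$ of the continuous maps $t\mapsto\<\tilde\xi_{N_i}(t),\phi\>$. For general $\phi\in H$, approximate by smooth $\phi_n\to\phi$ in $L^2$; for $s,t\in D$, the triangle inequality together with the pointwise bound on $D$ give
$$|\<\tilde\xi(t),\phi\>-\<\tilde\xi(s),\phi\>|\leq|\<\tilde\xi(t),\phi_n\>-\<\tilde\xi(s),\phi_n\>|+2\|\xi_0\|_{L^2}\|\phi-\phi_n\|_{L^2}.$$
Letting $n\to\infty$ shows $t\mapsto\<\tilde\xi(t),\phi\>$ is uniformly continuous on $D$, hence extends continuously to $[0,T]$; this extension must coincide with the $H^-$-pairing already known to be continuous, proving weak continuity of $\tilde\xi$ into $H$.

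Finally, the bound \eqref{estim-final} follows by density: for any $t\in[0,T]$, choose $t_k\in D$ with $t_k\to t$; weak continuity yields $\tilde\xi(t_k)\rightharpoonup\tilde\xi(t)$ in $H$, and lower semicontinuity of the norm together with the bound at $t_k$ gives $\|\tilde\xi(t)\|_{L^2}\leq\liminf_k\|\tilde\xi(t_k)\|_{L^2}\leq\|\xi_0\|_{L^2}$. I expect the main subtlety to be the uncountable-time issue: pointwise almost-sure estimates do not directly supply a uniform-in-$t$ bound, and the identification $\eta=\tilde\xi(t)$ obtained by comparing the two convergence modes is what lets the dense-set argument close the gap.
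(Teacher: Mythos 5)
Your proof is correct and rests on the same two pillars as the paper's: the uniform bound \eqref{sect-3.1} plus the almost-sure convergence in $C([0,T];H^-)$, combined via weak compactness of bounded sets in $L^2$, identification of the weak limit through the $H^-$-pairing against smooth test functions, and weak lower semicontinuity of the norm. The one structural difference is where the weak compactness is applied: the paper first extracts a weak-$\ast$ limit in $L^\infty(0,T;L^2)$, which only yields the bound for Lebesgue-a.e.\ $t$ and therefore forces the subsequent dense-set $S_\omega$ argument to reach all times; you instead apply weak compactness pointwise at each fixed $t$, which (since the null set coming from \eqref{sect-3.1} and \eqref{strong-convergence} does not depend on $t$ --- the sup over $t$ is already inside the a.s.\ statement, and one intersects only over the countably many $i$) gives \eqref{estim-final} at \emph{every} $t\in[0,T]$ in one stroke. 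This is actually a small streamlining of the paper's route. Ironically, you then reintroduce the dense set $D$ and a continuity-extension step out of concern for an ``uncountable-time issue'' that is not present in your setup; that detour is redundant but harmless, and the weak-continuity argument (continuity of $t\mapsto\langle\tilde\xi(t),\phi\rangle$ for smooth $\phi$ from the $H^-$-continuity, upgraded to all $\phi\in L^2$ by density using the uniform bound) is sound and equivalent to the paper's subsequence argument.
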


\begin{proof}
Thanks to \eqref{sect-3.1}, there exists a set $\Gamma\subset \tilde{\Omega}$ of full measure such that, for every $\omega\in \Gamma$, \eqref{strong-convergence} holds and
\begin{equation}\label{estim lem-bddness}
\sup_{i\geq 1} \sup_{t\in [0,T]} \Vert \tilde\xi_{N_i}(\omega, t)\Vert_{L^2} \leq \Vert \xi_0\Vert_{L^2}.
\end{equation}
Let us fix $\omega\in \Gamma$. Then by \eqref{estim lem-bddness} the sequence $\{\tilde\xi_{N_i} (\omega,\cdot)\}_{i\geq 1}$ is bounded in $L^\infty(0,T;L^2)$ and so we can extract a subsequence (not relabelled for simplicity) which is weak-$\ast$ convergent. But weak-$\ast$ convergence in $L^\infty(0,T;L^2)$ implies weak-$\ast$ convergence in $L^\infty(0,T;H^{-})$, which implies by \eqref{strong-convergence} that the limit is necessarily $\tilde\xi$; therefore by properties of weak-$\ast$ convergence
\begin{equation}\nonumber
\Vert \tilde\xi(\omega,\cdot)\Vert_{L^\infty(0,T;L^2)}\leq \liminf_N \Vert \tilde\xi_N(\omega,\cdot)\Vert_{L^\infty(0,T;L^2)}\leq \Vert \xi_0\Vert_{L^2}.
\end{equation}
In particular, there exists a subset $S_\omega\subset [0,T]$ of full Lebesgue measure (thus dense) such that $\Vert \tilde\xi(\omega,s) \Vert_{L^2}\leq \Vert\xi_0\Vert_{L^2}$ for every $s\in S_\omega$. Now let $t\in [0,T]\setminus S_\omega$ and consider a sequence $t_n\to t$, $t_n\in S_\omega$. Then the sequence $\{\tilde\xi(\omega, t_n)\}_n$ is uniformly bounded in $L^2$ and we can therefore extract a weakly convergent subsequence; but $\tilde\xi(\omega,\cdot)\in C([0,T];H^{-})$, therefore $\tilde\xi(\omega,t_n)\to \tilde\xi(\omega, t)$ in $H^{-}$ and so the weak limit must be $\tilde\xi(\omega,t)$. By properties of weak convergence we have
\begin{equation}\nonumber
\Vert \tilde\xi(\omega,t)\Vert_{L^2}\leq \liminf_n \Vert \tilde\xi(\omega,t_n)\Vert_{L^2}\leq \Vert \xi_0\Vert_{L^2}.
\end{equation}
As the reasoning holds for any $t\in [0,T]\setminus S_\omega$, for any $\omega\in\Gamma$, we have obtained
\begin{equation}\nonumber
\sup_{t\in [0,T]} \Vert \tilde\xi(\omega,t)\Vert_{L^2}\leq \Vert \xi_0\Vert_{L^2}\quad \forall\,\omega\in \Gamma,
\end{equation}
namely \eqref{estim-final}. It remains to show that, for every $\omega\in\Gamma$, $t\mapsto \tilde\xi(\omega,t)$ is weakly continuous in $L^2$. Let $t_n\to t$, then by \eqref{estim-final} the sequence $\{\tilde\xi(\omega,t_n)\}_n$ is bounded in $L^2$ and so it admits a weakly convergent subsequence. But $\tilde\xi(\omega,\cdot)\in C([0,T];H^{-})$, therefore the weak limit is necessarily $\tilde\xi (\omega,t)$; as the reasoning holds for any subsequence of $\{\tilde\xi(\omega,t_n)\}_n$, we deduce that the whole sequence is weakly converging to $\tilde\xi(\omega,t)$.
\end{proof}

Finally we can give the

\begin{proof}[Proof of Theorem \ref{thm-existence}]
The processes $\big(\tilde \xi_{N_i}(\cdot), \tilde W^{N_i}_\cdot \big)$ on the new probability space $\big(\tilde\Omega, \tilde{\mathcal F}, \tilde \P \big)$ have the same laws with that of $(\xi_{N_i}(\cdot), W_\cdot )$, which satisfy the equation \eqref{SDE} with $N$ replaced by $N_i$. Some classical arguments show that the stochastic integrals involved below make sense, see e.g. \cite[Section 2.6, p.89]{Krylov}. Therefore, for any $\phi\in C^\infty(\T^2)$, one has, $\tilde \P$-a.s for all $t\in [0,T]$,
  \begin{equation}\label{proof-0}
  \aligned
  \big\< \tilde\xi_{N_i}(t),\phi \big\> =&\, \big\< \xi_{N_i}(0),\phi \big\> +\int_0^t \big\< \tilde\xi_{N_i}(s), \tilde u_{N_i}(s)\cdot \nabla \phi \big\>\,\d s + \nu \int_0^t \big\<\tilde\xi_{N_i}(s), \Delta \phi \big\>\,\d s \\
  & - \eps \sum_{k\in \Z^2_0} \theta_k \int_0^t \big\<\tilde\xi_{N_i}(s), \sigma_k\cdot \nabla \phi \big\>\,\d\tilde W^{N_i,k}_s.
  \endaligned
  \end{equation}
We regard all the quantities as real valued stochastic processes. From the above discussions, we can prove that, as $i\to \infty$, all the terms of the first line converge in $L^1\big( \tilde\Omega, C([0,T], \R)\big)$ to the corresponding ones. Indeed, considering $\<\cdot, \cdot\>$ as the duality between distributions and smooth functions, then \eqref{strong-convergence} implies that, $\tilde\P$-a.s., $\big\< \tilde\xi_{N_i}(\cdot),\phi \big\>$ converge in $C([0,T], \R)$ to $\big\< \tilde\xi(\cdot),\phi \big\>$. Moreover, by \eqref{sect-3.1},
  $$\big| \big\< \tilde\xi_{N_i}(t),\phi \big\>\big| \leq \|\xi_0\|_{L^2} \|\phi\|_{L^2} \quad \tilde\P \mbox{-a.s. for all } t\in [0,T].$$
Thus the dominated convergence theorem implies the desired result. For the nonlinear term, we have
  $$\aligned
  &\ \E_{\tilde \P} \bigg[\sup_{t\in [0,T]}\bigg| \int_0^t \big\< \tilde\xi_{N_i}(s), \tilde u_{N_i}(s)\cdot \nabla \phi \big\>\,\d s - \int_0^t \big\< \tilde\xi(s), \tilde u(s)\cdot \nabla \phi \big\>\,\d s \bigg| \bigg] \\
  \leq &\ \E_{\tilde \P} \bigg[\sup_{t\in [0,T]}\bigg| \int_0^t \big\< \tilde\xi_{N_i}(s), \tilde u_{N_i}(s)\cdot \nabla \phi \big\>\,\d s - \int_0^t \big\< \tilde\xi_{N_i}(s), \tilde u(s)\cdot \nabla \phi \big\>\,\d s \bigg| \bigg]\\
  &\, + \E_{\tilde \P} \bigg[ \sup_{t\in [0,T]}\bigg| \int_0^t \big\< \tilde\xi_{N_i}(s), \tilde u(s)\cdot \nabla \phi \big\>\,\d s - \int_0^t \big\< \tilde\xi(s), \tilde u(s)\cdot \nabla \phi \big\>\,\d s \bigg| \bigg].
  \endaligned$$
By \eqref{sect-3.1}, the sequence $\tilde u_{N_i}$ is almost surely bounded in $L^2(0,T; H)$. This plus the almost sure convergence \eqref{strong-convergence-1} of $\tilde u_{N_i}$ to $\tilde u$ in $C([0,T]; H^{1-})$ implies that $\tilde u_{N_i}$ converge strongly in $L^2\big(\tilde\Omega, L^2(0,T; H) \big)$ to  $\tilde u$. Thanks to \eqref{sect-3.1} and \eqref{estim-final}, the first term on the right hand side vanishes as $i\to \infty$. For the second term, by \eqref{strong-convergence}, the quantity in the square bracket tends to 0 $\tilde\P$-a.s., which together with the bounds \eqref{sect-3.1} and \eqref{estim-final}, the dominated convergence theorem leads to the desired result.

It remains to show the convergence of the stochastic integrals. Fix any $M\in \N$; we have
  \begin{equation}\label{proof-1}
  \aligned
  &\ \E_{\tilde \P} \bigg[\sup_{t\in [0,T]}\bigg| \sum_{k\in \Z^2_0} \theta_k \int_0^t \big\<\tilde\xi_{N_i}(s), \sigma_k\cdot \nabla \phi \big\>\,\d \tilde W^{N_i,k}_s - \sum_{k\in \Z^2_0} \theta_k \int_0^t \big\<\tilde\xi(s), \sigma_k\cdot \nabla \phi \big\>\,\d\tilde W^{k}_s \bigg| \bigg] \\
  \leq &\ \E_{\tilde \P} \bigg[ \sup_{t\in [0,T]}\bigg| \sum_{|k|\leq M} \theta_k \bigg(\int_0^t \big\<\tilde\xi_{N_i}(s), \sigma_k\cdot \nabla \phi \big\>\,\d \tilde W^{N_i,k}_s - \int_0^t \big\<\tilde\xi(s), \sigma_k\cdot \nabla \phi \big\>\,\d\tilde W^{k}_s \bigg) \bigg| \bigg] \\
  &\, + \E_{\tilde \P} \bigg[ \sup_{t\in [0,T]}\bigg| \sum_{|k|> M} \theta_k \int_0^t \big\<\tilde\xi_{N_i}(s), \sigma_k\cdot \nabla \phi \big\>\,\d \tilde W^{N_i,k}_s \bigg| \bigg] \\
  &\, + \E_{\tilde \P} \bigg[ \sup_{t\in [0,T]}\bigg| \sum_{|k|> M} \theta_k \int_0^t \big\<\tilde\xi(s), \sigma_k\cdot \nabla \phi \big\>\,\d \tilde W^{k}_s \bigg| \bigg].
  \endaligned
  \end{equation}
We denote the three expectations on the right hand side by $J_{N_i}^{(n)},\, n=1,2,3$. First,
  $$\aligned
  \big| J_{N_i}^{(2)} \big| &\leq C\, \E_{\tilde \P} \bigg[ \bigg( \sum_{|k|> M} \theta_k^2 \int_0^T \big\<\tilde\xi_{N_i}(s), \sigma_k\cdot \nabla \phi \big\>^2\,\d s \bigg)^{1/2} \bigg] \\
  &\leq C\|\theta\|_{\ell^\infty_{>M}}\, \E_{\tilde \P} \bigg[ \bigg( \sum_{|k|> M} \int_0^T \big\<\tilde\xi_{N_i}(s)\nabla \phi , \sigma_k \big\>^2 \,\d s \bigg)^{1/2} \bigg] \\
  &\leq C\|\theta\|_{\ell^\infty_{>M}}\, T^{1/2} \|\xi_0\|_{L^2} \|\nabla\phi \|_\infty,
  \endaligned$$
where $\|\theta\|_{\ell^\infty_{>M}} = \sup_{|k|>M} |\theta_k|$ tends to 0 as $M\to \infty$, due to $\theta\in \ell^2$. Similar estimate holds for $J_{N_i}^{(3)}$ by Lemma \ref{lem-bddness}.

Finally, we deal with $J_{N_i}^{(1)}$ for which we need Skorohod's result for convergence of stochastic integrals, see for instance \cite[Lemma 5.2]{GyMa} and \cite[Lemma 3.2]{Luo} for a slightly more general version. By the discussions above Lemma \ref{lem-bddness}, we known that as $i\to \infty$, $\tilde\P$-a.s. for all $s\in [0,T]$, $\big\<\tilde\xi_{N_i}(s), \sigma_k\cdot \nabla \phi \big\> \to \big\<\tilde\xi(s), \sigma_k\cdot \nabla \phi \big\>$ and $\tilde W^{N_i,k}_s \to \tilde W^{k}_s$. Since there are only finitely many stochastic integrals, by  \cite[Lemma 3.2]{Luo}, it is sufficient to show that, for any $|k|\leq M$,
  $$\bigg(\E_{\tilde \P} \int_0^T \big\<\tilde\xi(s), \sigma_k\cdot \nabla \phi \big\>^4 \,\d s \bigg) \bigvee \bigg(\sup_{i\geq 1} \E_{\tilde \P} \int_0^T \big\<\tilde\xi_{N_i}(s), \sigma_k\cdot \nabla \phi \big\>^4 \,\d s \bigg)<+\infty.$$
Indeed, by Lemma \ref{lem-bddness},
  $$\E_{\tilde \P} \int_0^T \big\<\tilde\xi(s), \sigma_k\cdot \nabla \phi \big\>^4 \,\d s \leq \E_{\tilde \P} \int_0^T \big\|\tilde\xi(s) \big\|_{L^2}^4 \big\|\sigma_k\cdot \nabla \phi \big\|_{L^2}^4  \,\d s \leq T \|\xi_0\|_{L^2}^4 \|\nabla\phi\|_\infty^4. $$
Analogous uniform estimate holds for the second part. Therefore we obtain $\lim_{i\to \infty} J_{N_i}^{(1)} =0$. First letting $i\to \infty$ and then $M\to \infty$ in \eqref{proof-1}, we have proved the convergence of stochastic integrals.

Therefore, letting $i\to \infty$ in \eqref{proof-0}, we obtain, $\tilde\P$-a.s. for all $t\in [0,T]$,
  $$\aligned
  \big\< \tilde\xi(t),\phi \big\> =&\, \big\< \xi(0),\phi \big\> +\int_0^t \big\< \tilde\xi(s), \tilde u(s)\cdot \nabla \phi \big\>\,\d s + \nu \int_0^t \big\<\tilde\xi(s), \Delta \phi \big\>\,\d s \\
  & - \eps \sum_{k\in \Z^2_0} \theta_k \int_0^t \big\<\tilde\xi(s), \sigma_k\cdot \nabla \phi \big\>\,\d\tilde W^{k}_s.
  \endaligned $$
This completes the proof.
\end{proof}

\section{Convergence to 2D Navier--Stokes equations} \label{sec4}

In this section we show that the solutions to \eqref{SEE-approx-formulation} converge weakly to the unique solution of the deterministic 2D Navier--Stokes equations.

Let us briefly recall the setting: we fix $\xi_0\in L^2$ and $\nu>0$, we consider a sequence $\big\{\theta^N_\cdot \big\}_{N\geq 1}$ satisfying \eqref{theta} and \eqref{theta-N}, and define $\varepsilon_N$ by \eqref{eps-N}. For each $N$, we consider a weak solution $\xi^N$ of \eqref{SEE-approx-formulation} with initial data $\xi_0$, satisfying
  \begin{equation}\label{sect-4.1}
  \sup_{t\in [0,T]} \big\|\xi^N_t \big\|_{L^2} \leq \|\xi_0 \|_{L^2} \quad \P\mbox{-a.s.},
  \end{equation}
whose existence is granted by Theorem \ref{thm-existence}. Since we are dealing with weak solutions, the processes $\xi^N$ might be defined on different probability space; however, for the sake of simplicity, in the following we do not distinguish the notations $\E$, $\P$, $\Omega$, etc.

Let us immediately remark that conditions \eqref{theta-N} and \eqref{eps-N} together imply
$$ \lim_{N\to\infty} \varepsilon_N \big\Vert \theta^N_\cdot \big\Vert_{\ell^\infty} =0,$$
therefore the sequence $\big\{\varepsilon_N \big\Vert \theta^N_\cdot \big\Vert_{\ell^\infty} \big\}_{N\geq 1}$ is bounded by a suitable constant.

Let $Q^N$ denote the law of $\xi^N$, $N\geq 1$. Similarly as in Section 3, we can show that $\{Q^N\}_{N\geq 1}$ is tight on $C([0,T];H^-)$, which can be reduced to show that it is tight on $C\big([0,T];H^{-\delta} \big)$ for any $\delta\in (0,1)$. We sketch the proof here. First, similar to Theorem \ref{thm-simon}, we have the following result: given $\beta>4$, if $p> 12 (1+\beta -\delta)/\delta$, then
  $$L^p(0,T; H) \cap W^{1/3, 4} \big(0,T; H^{-1-\beta} \big) \subset C\big( [0,T]; H^{-\delta} \big) $$
is a  compact embedding. Thus, by \eqref{sect-4.1}, to prove the tightness of $\{Q^N\}_{N\geq 1}$ on $C\big([0,T];H^{-\delta} \big)$, it is enough to show that
\begin{equation}\label{sect-4-bound}
\sup_{N\geq 1} \mathbb{E}\int_0^T\! \int_0^T \frac{\big\Vert \xi^N_t-\xi^N_s \big\Vert_{H^{-1-\beta}}^4}{\vert t-s\vert^{7/3}}\,\d t\d s <\infty.
\end{equation}
To this aim, it suffices to obtain estimates similar to those of Lemma \ref{lem-estimate}, taking care that all the constants involved do not depend on $\theta^N_\cdot$ nor $\varepsilon_N$.

\begin{lemma}\label{lem-estimate 2}
There is a constant $C>0$ such that for any $N\geq 1$, $0\leq s<t \leq T$,
  $$\E \big( \big\<\xi^N_t - \xi^N_s, e_k \big\>^4 \big) \leq C |k|^8 |t-s|^2 \quad \mbox{for all } k\in \Z^2_0.$$
\end{lemma}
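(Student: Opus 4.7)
The strategy is a direct adaptation of Lemma \ref{lem-estimate} to the sequence $\big\{\xi^N \big\}$, with the crucial caveat that every constant must be independent of $N$, $\theta^N_\cdot$ and $\varepsilon_N$. The key observation that makes this possible is the one already recorded in the paragraph preceding the statement: the sequence $\varepsilon_N \|\theta^N_\cdot\|_{\ell^\infty}$ is uniformly bounded, as a consequence of \eqref{theta-N} and \eqref{eps-N}.

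First, I would test \eqref{SEE-approx-formulation} against $\phi = e_k$ to obtain
\begin{equation}\nonumber
\big\<\xi^N_t - \xi^N_s, e_k\big\> = \int_s^t \big\<\xi^N_r, u^N_r\cdot\nabla e_k\big\>\,\d r + \nu \int_s^t \big\<\xi^N_r, \Delta e_k\big\>\,\d r - \varepsilon_N \sum_{l\in\Z^2_0} \theta^N_l \int_s^t \big\<\xi^N_r, \sigma_l\cdot\nabla e_k\big\>\,\d W^l_r,
\end{equation}
and then estimate the fourth moment of each of the three pieces separately.

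For the first piece, by H\"older's inequality, the pointwise bound $\|\nabla e_k\|_\infty \leq C|k|$, the identity $u^N = K\ast \xi^N$ (so $\|u^N_r\|_{L^2}\leq C\|\xi^N_r\|_{L^2}$), and the uniform energy bound \eqref{sect-4.1}, one obtains $C\|\xi_0\|_{L^2}^8\,|k|^4\,|t-s|^4$. For the second piece, since $\Delta e_k = -4\pi^2|k|^2 e_k$, the same argument yields $C\|\xi_0\|_{L^2}^4\,|k|^8\,|t-s|^4$. For the third (stochastic) piece, I would apply Burkholder's inequality and then exploit the orthonormality of $\{\sigma_l\}_{l\in\Z^2_0}$ in $L^2$ to bound
\begin{equation}\nonumber
\sum_{l\in\Z^2_0} (\theta^N_l)^2 \big\<\xi^N_r, \sigma_l\cdot\nabla e_k\big\>^2 \leq \|\theta^N_\cdot\|_{\ell^\infty}^2\, \big\|\xi^N_r\nabla e_k\big\|_{L^2}^2 \leq C\|\theta^N_\cdot\|_{\ell^\infty}^2\, |k|^2\, \|\xi_0\|_{L^2}^2,
\end{equation}
yielding the fourth-moment bound $C\big(\varepsilon_N\|\theta^N_\cdot\|_{\ell^\infty}\big)^4 |k|^4\, \|\xi_0\|_{L^2}^4\, |t-s|^2$. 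Here the uniform bound on $\varepsilon_N\|\theta^N_\cdot\|_{\ell^\infty}$ is exactly what removes the $N$-dependence.

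Finally, collecting the three contributions and using $|t-s|^4 \leq T^2|t-s|^2$ to downgrade the (smaller-in-time but larger-in-$|k|$) deterministic terms to the common scale $|t-s|^2$, one obtains a bound of the form $C|k|^8 |t-s|^2$ with $C$ depending only on $T$, $\nu$, $\|\xi_0\|_{L^2}$ and the uniform constant $\sup_N \varepsilon_N\|\theta^N_\cdot\|_{\ell^\infty}$, as required. There is no real obstacle here: the only point needing care, compared with Lemma \ref{lem-estimate}, is to funnel the $\varepsilon_N$ and $\theta^N_\cdot$ dependences in the stochastic term into the single quantity $\varepsilon_N\|\theta^N_\cdot\|_{\ell^\infty}$, for which uniformity in $N$ is already in hand.
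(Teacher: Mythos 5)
Your proposal is correct and follows essentially the same route as the paper, which simply tests \eqref{SEE-approx-formulation} against $e_k$ and repeats the estimates of Lemma \ref{lem-estimate} using the uniform energy bound \eqref{sect-4.1}. The one point the paper flags as needing care --- that the constants must not depend on $\theta^N_\cdot$ or $\varepsilon_N$, which is handled by the boundedness of $\varepsilon_N\|\theta^N_\cdot\|_{\ell^\infty}$ --- is exactly the point you identify and resolve.
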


\begin{proof} For any  fixed $k$, since $\xi^N$ is a solution of \eqref{SEE-approx-formulation}, it holds
\begin{equation}\nonumber\begin{split}
 \big\<\xi^N_t-\xi^N_s,e_k \big\> = \int_s^t \big\<\xi^N_r, u^N_r\cdot \nabla e_k \big\>\,\d r + \nu \int_s^t \big\<\xi^N_r, \Delta e_k \big\> \,\d r - \eps_N \sum_{l\in \Z^2_0} \theta^N_l \int_s^t \big\<\xi^N_r, \sigma_l\cdot \nabla e_k \big\>\,\d W^l_r.
\end{split}\end{equation}
Using this equation and the bound \eqref{sect-4.1}, we can proceed in the same way as the proof of Lemma \ref{lem-estimate}; we omit it here.
\end{proof}

Thanks to Lemma \ref{lem-estimate 2}, an analogous computation below the proof of Lemma \ref{lem-estimate} gives us the uniform estimate \eqref{sect-4-bound}. As a result, we conclude that the family $\big\{Q^N \big\}_{N\geq 1}$ is tight in $C\big([0,T];H^{-\delta} \big)$.

With the above preparations, the proof of Theorem \ref{thm-main} is similar to that of Theorem \ref{thm-existence}. However, we would like to provide here a slightly different argument, without using the Skorohod representation theorem. First, by the estimate \eqref{sect-4.1} we know that, for all $N$, almost every realization of $\xi^N$ satisfies
\begin{equation}\nonumber
\int_0^T \big\Vert \xi^N_r \big\Vert_{L^2}^2\,\d r\leq T\Vert\xi_0\Vert_{L^2}^2.
\end{equation}
In particular, if we fix a radius $R\geq \sqrt{T}\Vert \xi_0\Vert_{L^2}$ and consider the space
  \begin{equation} \label{weak-space}
  L^2_{R,w} = \big\{ f\in L^2(0,T;H) : \Vert f\Vert_{L^2(0,T;H)}\leq R \big\}
  \end{equation}
endowed with the weak topology, then it is a metrizable, compact space (see for instance \cite{Bre}); we can regard $\big\{ \xi^N \big\}_{N\geq 1}$ as random variables taking values in $L^2_{R,w}$ and so by compactness their laws form a tight sequence in such space. Next, note that the tightness of $\big\{Q^N \big\}_{N\geq 1}$ in $C\big([0,T];H^{-\delta} \big)$ implies that the family  $\big\{Q^N \big\}_{N\geq 1}$ is also tight on $L^2\big(0,T;H^{-1} \big)$. As a result, $\big\{Q^N \big\}_{N\geq 1}$ is tight in $L^2\big(0,T;H^{-1} \big)\cap L^2_{R,w}$.

Before giving the proof of the second part of Theorem \ref{thm-main}, we need the following lemma.

\begin{lemma}\label{lem continuity sec 4}
For any $\phi\in C^\infty(\mathbb{T}^2)$, consider the map
\begin{equation}\nonumber
F_\phi (f)_{\cdot} = \langle f_\cdot,\phi\rangle - \langle \xi_0,\phi\rangle -\int_0^\cdot \langle (K\ast f_s)\cdot\nabla\phi, f_s\rangle\,\d s - \nu \int_0^\cdot \<f_s, \Delta\phi\>\,\d s.
\end{equation}
Then $F_\phi$ is a continuous bounded map from $L^2\big(0,T;H^{-1} \big)\cap L^2_{R,w}$ into $L^2(0,T;\mathbb{R})$.
\end{lemma}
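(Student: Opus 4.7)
\medskip

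\noindent\textbf{Proof plan.} The map $F_\phi$ has four summands; three are linear in $f$ (plus a constant), and one is quadratic through the Biot--Savart kernel. The plan is to bound each summand in $L^2(0,T;\R)$ uniformly over $f \in L^2_{R,w}$ (boundedness), and then to show that if $f^N \to f$ in $L^2(0,T;H^{-1})$ along a sequence bounded in $L^2(0,T;H)$ by $R$, each summand converges in $L^2(0,T;\R)$ (continuity). The constant term $-\<\xi_0,\phi\>$ is trivial. For the linear terms $t \mapsto \<f_t, \phi\>$ and $t \mapsto \nu \int_0^t \<f_s, \Delta\phi\>\,\d s$, boundedness follows from Cauchy--Schwarz and the $L^2(0,T;H)$-bound, while continuity follows from $|\<g,\phi\>| \le \|g\|_{H^{-1}}\|\phi\|_{H^1}$ applied to $g = f^N_\cdot - f_\cdot$ (and similarly with $\Delta\phi$), which together with the strong convergence in $L^2(0,T;H^{-1})$ yields the desired $L^2(0,T;\R)$ decay (the time-integrated version absorbs an extra $T^{1/2}$ via Jensen).

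\medskip

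\noindent The heart of the matter is the quadratic term $B(f)_t := \int_0^t \<(K\ast f_s)\cdot \nabla\phi, f_s\>\,\d s$. The key tool is the smoothing property of the Biot--Savart operator on zero-mean functions on $\T^2$: for any $s \in \R$ one has $\|K\ast g\|_{H^{s+1}} \lesssim \|g\|_{H^s}$; in particular $\|K\ast g\|_{L^2} \lesssim \|g\|_{H^{-1}}$ and $\|K\ast g\|_{H^1} \lesssim \|g\|_{L^2}$. For boundedness, this gives
\begin{equation}\nonumber
|B(f)_t| \le \|\nabla\phi\|_{L^\infty} \int_0^T \|K\ast f_s\|_{L^2}\|f_s\|_{L^2}\,\d s \lesssim \|\nabla\phi\|_{L^\infty}\, R^2,
\end{equation}
so $B(f) \in L^\infty(0,T) \hookrightarrow L^2(0,T;\R)$ with a uniform bound.

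\medskip

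\noindent For continuity, I will use the standard bilinear splitting. Writing $B(f^N)_t - B(f)_t$ as the time integral of $\<(K\ast(f^N_s - f_s))\cdot \nabla\phi, f^N_s\> + \<(K\ast f_s)\cdot \nabla\phi, f^N_s - f_s\>$, it suffices to show that each of these two terms tends to zero in $L^1(0,T;\R)$ (then the $L^2(0,T;\R)$ convergence of the time-integrated quantities follows by the same Jensen estimate as above). The first term is bounded by
\begin{equation}\nonumber
\|\nabla\phi\|_{L^\infty}\,\|K\ast(f^N - f)\|_{L^2(0,T;L^2)}\|f^N\|_{L^2(0,T;L^2)} \lesssim \|\nabla\phi\|_{L^\infty}\,R\,\|f^N-f\|_{L^2(0,T;H^{-1})} \to 0,
\end{equation}
which uses exactly the smoothing $K:H^{-1}\to L^2$ together with the $R$-bound on $f^N$. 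For the second term, since $\phi$ is smooth and $(K\ast f_s)\cdot \nabla\phi \in H^1$ with $\|(K\ast f_s)\cdot \nabla\phi\|_{H^1} \lesssim \|\phi\|_{W^{2,\infty}}\|f_s\|_{L^2}$, duality gives
\begin{equation}\nonumber
\int_0^T |\<(K\ast f_s)\cdot \nabla\phi, f^N_s - f_s\>|\,\d s \lesssim \|\phi\|_{W^{2,\infty}}\,\|f\|_{L^2(0,T;L^2)}\,\|f^N - f\|_{L^2(0,T;H^{-1})} \to 0.
\end{equation}
This closes the argument.

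\medskip

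\noindent The only subtle point, and therefore the main thing to be careful about, is that weak convergence in $L^2(0,T;H)$ alone is \emph{not} enough to pass to the limit in the quadratic expression $\<K\ast f^N \cdot \nabla\phi, f^N\>$; one essentially needs compactness, which is exactly what the strong $L^2(0,T;H^{-1})$ convergence provides via the smoothing property of the Biot--Savart kernel. All the other estimates are standard consequences of smoothness of $\phi$ and the $L^2_{R,w}$-bound, and the passage from the pointwise-in-$t$ inner products to the $L^2(0,T;\R)$ norm is a routine application of the Cauchy--Schwarz/Jensen inequality.
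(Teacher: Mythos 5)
Your proof is correct, and for the key step --- continuity of the quadratic term --- it takes a genuinely different (and somewhat stronger) route than the paper. The paper exploits the full topology of $L^2(0,T;H^{-1})\cap L^2_{R,w}$: it notes that $K\ast f^n \to K\ast f$ strongly in $L^2(0,T;L^2)$ while $f^n \rightharpoonup f$ weakly in $L^2(0,T;L^2)$, and passes to the limit in the product of a strongly and a weakly convergent sequence, obtaining pointwise-in-$t$ convergence of the integrals and concluding by dominated convergence. You instead perform the bilinear splitting
\begin{equation}\nonumber
\big\langle \big(K\ast(f^N_s-f_s)\big)\cdot\nabla\phi,\, f^N_s\big\rangle
+ \big\langle (K\ast f_s)\cdot\nabla\phi,\, f^N_s-f_s\big\rangle,
\end{equation}
estimating the first summand via the smoothing $K:H^{-1}\to L^2$ and the uniform $L^2$-bound on $f^N$, and the second via the $H^1$--$H^{-1}$ duality after checking $\|(K\ast f_s)\cdot\nabla\phi\|_{H^1}\lesssim \|\phi\|_{W^{2,\infty}}\|f_s\|_{L^2}$. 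This yields the quantitative bound $\sup_t|B(f^N)_t-B(f)_t|\lesssim_{\phi} R\,\|f^N-f\|_{L^2(0,T;H^{-1})}$, so you never actually invoke the weak convergence in $L^2(0,T;L^2)$, only the norm bound $R$; what you lose is nothing, what you gain is an explicit modulus of continuity for $F_\phi$ on $L^2_{R,w}$ with respect to the $L^2(0,T;H^{-1})$ metric. The boundedness part and the treatment of the linear terms coincide with the paper's, and your remark that weak convergence alone would not suffice for the quadratic term correctly identifies where the strong $H^{-1}$ convergence is essential.
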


\begin{proof}
Let us show boundedness first. We have
\begin{equation}\nonumber\begin{split}
\vert F_\phi(f)_t\vert
& \leq \Vert f_t\Vert_{L^2}\,\Vert \phi\Vert_{L^2} + \Vert \xi_0\Vert_{L^2}\,\Vert \phi\Vert_{L^2} + \int_0^t \vert \langle (K\ast f_s)\cdot\nabla\phi, f_s\rangle\vert\,\d s + \nu \int_0^t |\<f_s, \Delta\phi\>| \,\d s \\
& \leq \Vert f_t\Vert_{L^2}\,\Vert \phi\Vert_{L^2} + \Vert \xi_0\Vert_{L^2}\,\Vert \phi\Vert_{L^2} + \Vert \nabla\phi\Vert_{L^\infty} \int_0^T \Vert f_s\Vert_{L^2}^2\, \d s + \nu \|\Delta\phi\|_\infty \int_0^T \|f_s\|_{L^2} \,\d s\\
& \leq \Vert \phi\Vert_{C^2} (\Vert f_t\Vert_{L^2} + \Vert \xi_0\Vert_{L^2} + C_{R,T}),
\end{split}\end{equation}
where we used the fact that $f\in L^2_{w,R}$, and $C_{R,T}$ is a constant depending on $R$ and $T$. Therefore
$$\Vert F_\phi(f)\Vert_{L^2(0,T;\mathbb{R})}
\leq \Vert \phi\Vert_{C^2} \big(\Vert f\Vert_{L^2(0,T;L^2)} + \Vert \xi_0\Vert_{L^2} + C_{R,T} \big)
\leq \Vert \phi\Vert_{C^2} \big(\Vert \xi_0\Vert_{L^2} + C'_{R,T} \big).$$
Regarding continuity: let $f^n$ be a sequence converging to $f$ in $L^2\big(0,T;H^{-1} \big)\cap L^2_{R,w}$, namely $f^n \to f$ strongly in $L^2\big(0,T;H^{-1} \big)$ and weakly in $L^2\big(0,T;L^2 \big)$. Strong convergence in $L^2\big(0,T;H^{-1} \big)$ implies convergence of $\langle f^n,\phi\rangle$ to $\langle f,\phi\rangle$ in $L^2(0,T)$, similarly for $\int_0^\cdot \langle f^n,\Delta\phi\rangle\,\d s$ to $\int_0^\cdot \langle f,\Delta\phi\rangle\,\d s$; so we only need to check convergence of the nonlinear term. By properties of the Biot--Savart kernel, $K\ast f^n\to K\ast f$ strongly in $L^2\big(0,T;L^2 \big)$; combining the strong convergence of $K\ast f^n$ and the weak convergence of $f^n$ we obtain, that for any $t\in (0,T)$,
\begin{equation}\nonumber
\int_0^t \big\langle (K\ast f^n_s)\cdot\nabla\phi, f^n_s \big\rangle\,\d s \to \int_0^t \big\langle (K\ast f_s)\cdot\nabla\phi, f_s \big\rangle \,\d s.
\end{equation}
Therefore pointwise convergence holds; the previous estimates also show uniform boundedness of the integral processes, therefore by dominated convergence we obtain the conclusion.
\end{proof}

Finally we can complete the

\begin{proof}[Proof of Theorem \ref{thm-main}]
The fact that $\xi^N$ are solutions of \eqref{SEE-approx-formulation} may be formulated as follows: for every $\phi\in C^\infty(\mathbb{T}^2)$, the equality $F_\phi\big(\xi^N \big)=M^N_\phi$ holds, where $F_\phi$ is defined as in Lemma \ref{lem continuity sec 4} and $M^N_\phi$ is the process given by
\begin{equation}\nonumber
M^N_\phi = -\varepsilon_N\sum_{k\in\mathbb{Z}^2_0} \theta^N_k \int_0^\cdot \big\< \xi^N_s,\sigma_k\cdot\nabla\phi \big\>\,\d W^k_s.
\end{equation}
The sequence $\big\{Q^N \big\}_{N\geq 1}$ is tight in $L^2\big(0,T;H^{-1} \big)\cap L^2_{R,w}$, therefore by Prohorov theorem we can extract a subsequence (not relabelled for simplicity) which is weakly converging to the law $Q$ of some $L^2\big(0,T;H^{-1} \big)\cap L^2_{R,w}$-valued random variable $\xi$. By Lemma \ref{lem continuity sec 4}, $F_\phi$ is a continuous and bounded map, therefore by properties of convergence in law $F_\phi(\xi^N)$ are also converging in distribution to $F_\phi(\xi)$; in particular this implies that $M^N_\phi$ are also converging to some limit.
On the other side, by It\^o's isometry we have
\begin{equation}\nonumber\begin{split}
\mathbb{E}\int_0^T \big\vert M^N_\phi(t) \big\vert^2\,\d t
& = \varepsilon_N^2\int_0^T \mathbb{E}\int_0^t \sum_{k\in\mathbb{Z}_0^2} \big(\theta_k^N \big)^2 \big\<\xi^N_s,\sigma_k\cdot \nabla\phi \big\>^2\, \d s\, \d t\\
& \leq T \varepsilon_N^2 \big\Vert \theta^N_\cdot \big\Vert_{\ell^\infty}^2 \mathbb{E}\int_0^T\sum_{k\in\mathbb{Z}_0^2} \big\< \xi^N_s\,\nabla\phi, \sigma_k \big\>^2\,\d s\\
& \leq T \varepsilon_N^2 \big\Vert \theta^N_\cdot \big\Vert_{\ell^\infty}^2 \int_0^T \mathbb{E}\Big( \big\Vert \xi_s^N\,\nabla\phi \big\Vert_{L^2}^2 \Big)\,\d s\\
& \leq T^2\Vert \xi_0\Vert^2_{L^2}\,\Vert\nabla\phi\Vert_{L^\infty}^2\,\varepsilon_N^2 \big\Vert \theta^N_\cdot \big\Vert_{\ell^\infty}^2 \to 0 \quad \text{ as }N\to\infty
\end{split}\end{equation}
which implies that $M^N_\phi$ is converging in law to 0; therefore $F_\phi(\xi)=0$, up to a $Q$-negligible set. Given a countable dense set $\{\phi_n\}_n$, we can deduce that the support of $Q$ satisfies $F_{\phi_n}(\xi)=0$ for all $n$. This, together with its $L^2$-boundedness, implies that $F_\phi(\xi)=0$ for all $\phi$. Namely, the support of $Q$ is made of solutions of the deterministic 2D Navier--Stokes equation \eqref{NSE} starting at $\xi_0$; therefore by uniqueness $Q$ is given by $\delta_\xi$, where $\xi$ is such unique solution. As the reasoning applies to any subsequence of $\big\{Q^N \big\}_{N\geq 1}$, we deduce convergence in law of the whole sequence to $\delta_\xi$.
\end{proof}

As a consequence of Theorem \ref{thm-main} we deduce convergence of the passive scalars advected by $u^N$ to those advected by $u$, where as usual $u^N$ and $u$ denote the velocity fields associated to $\xi^N$ and $\xi$. To state the result, we assume for simplicity the sequence $u^N$ to be defined on the same filtered probability space $(\Omega,\mathcal{F},\mathcal{F}_t,\mathbb{P})$ and such that $u^N(\omega)\to u$ in $L^2\big(0,T;L^2 \big)$ for every $\omega\in \Gamma$, a set of full probability; this comes without loss of generality by applying Skorokhod's theorem. For a given $\rho_0\in L^p(\mathbb{T}^2)$, $p\in (1,\infty)$, we denote by $\rho^N$ the passive scalar advected by $u^N$ with initial configuration $\rho_0$, i.e. the solution of
\begin{equation}\label{passive scalar eq sec 4}
\begin{cases}
\partial_t \rho^N + u^N\cdot\nabla\rho^N = 0,\\ \rho^N(0)=\rho_0;
\end{cases}
\end{equation}
similarly for $\rho$ and $u$. By \eqref{energy estimate thm-existence}, we can take $\Gamma$ such that $\sup_{N\geq 1} \big\Vert u^N(\omega) \big\Vert_{L^\infty(0,T; H^1)}\leq \Vert \xi_0\Vert_{L^2}$ for every $\omega\in \Gamma$ and thus, by the DiPerna--Lions theory, equation \eqref{passive scalar eq sec 4} admits a unique weak solution, which belongs to $C([0,T];L^p)$; similarly for $\rho$. We have the following

\begin{corollary}\label{corollary sec4}
For any $\omega\in \Gamma$, any $p\in(1,+\infty)$ and any $\rho_0\in L^p$, $\rho^N(\omega)\to \rho$ in $C([0,T];L^p)$.
\end{corollary}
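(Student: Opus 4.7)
The statement is purely deterministic once $\omega\in\Gamma$ is fixed, so I would drop $\omega$ from the notation throughout. The plan is to read the corollary as a stability statement for the linear transport equation and to invoke the DiPerna--Lions theory.

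First I would collect the inputs. By the definition of $\Gamma$ and the Biot--Savart relation $u^N = K\ast \xi^N$, every drift $u^N$ is divergence-free, $\sup_{N} \|u^N\|_{L^\infty(0,T;H^1)} \leq \|\xi_0\|_{L^2}$, and $u^N\to u$ in $L^2(0,T;L^2)$, hence also in $L^1((0,T)\times\mathbb{T}^2)$; the limit $u$ inherits divergence-freeness and the same Sobolev bound. Existence and uniqueness of renormalized weak solutions $\rho^N,\rho\in C([0,T];L^p)$ of \eqref{passive scalar eq sec 4} are already recorded in the statement, and the divergence-free condition combined with the renormalization property gives the $L^p$-norm preservation $\|\rho^N(t)\|_{L^p}=\|\rho_0\|_{L^p}$ for every $t\in[0,T]$, and analogously for $\rho$.

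The key step is the stability part of the DiPerna--Lions theory: with the uniform $L^\infty_t H^1_x$ control on the drifts, the strong $L^1_{\text{loc}}$ convergence $u^N\to u$, and the common initial datum $\rho_0$, the corresponding renormalized solutions satisfy $\rho^N\to \rho$ in $C([0,T];L^p(\mathbb{T}^2))$ (in the compact setting of the torus no distinction between local and global convergence arises). The main technical ingredient, hidden inside this stability result, is the commutator lemma, which allows one to pass to the limit in the nonlinear advection $u^N\cdot\nabla\rho^N$ despite the merely weak-$\ast$ compactness of $\{\rho^N\}$ in $L^\infty_t L^p_x$; given the $H^1$-regular drifts this is a standard instance of the DiPerna--Lions framework and thus presents no genuine obstacle. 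A fully self-contained route would extract a weak-$\ast$ subsequential limit of $\{\rho^N\}$ in $L^\infty(0,T;L^p)$, identify it as a renormalized solution of the transport equation driven by $u$ (the commutator estimate, together with the strong $L^2$ convergence $u^N\to u$, handles the passage to the limit in the product), use uniqueness to force the whole sequence to converge, and finally upgrade weak to strong convergence via the norm identity $\|\rho^N(t)\|_{L^p}=\|\rho(t)\|_{L^p}=\|\rho_0\|_{L^p}$ combined with the uniform convexity of $L^p$ for $p\in(1,\infty)$.
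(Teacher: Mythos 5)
Your proposal is correct and matches the paper's approach: the paper's entire proof is a citation of the DiPerna--Lions stability theorem (Theorem II.5 of their paper), which is exactly the result you invoke, with the same hypotheses (uniform $L^\infty_t H^1_x$ bounds on the divergence-free drifts and strong $L^1_{\mathrm{loc}}$ convergence $u^N\to u$). Your additional sketch of the internals of that stability result (commutator lemma, uniqueness, upgrading weak to strong convergence via norm conservation and uniform convexity) is consistent and correct but not needed beyond the citation.
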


\begin{proof} It follows immediately from \cite[Theorem II.5, p. 527]{DiPLio}.
\end{proof}

\section{Convergence of finite dimensional approximations}\label{sec5}

The setting of this section is the same as Section \ref{sec4} in terms of $\xi_0$, $\nu$, $\big\{\theta^N_\cdot \big\}_N$ and $\varepsilon_N$. However, for any $N$ we now consider $\xi^N$ to be an $H_N$-valued solution of the following SDE:

\begin{equation}\label{SDE-sec5}
\d \xi^N = -b_N\big(\xi^N \big)\,\d t + \varepsilon_N \sum_{k\in\mathbb{Z}^2_0}\theta^N_k G_N^k\big( \xi^N\big)\circ\d W^k, \quad \xi^N_0=\Pi_N \xi_0,
\end{equation}
where the vector fields $b_N$ and $ G_N^k$ are defined at the beginning of Section \ref{sec3}. Recall that $G_N^k\big( \xi^N\big)=0$ whenever $\vert k\vert>2N$, thus the series appearing on the right hand side is finite. We are interested in determining conditions on $\big\{ \theta^N_\cdot \big\}_N$ under which $\xi^N$ converge in law to the unique solution of \eqref{NSE}. Different finite dimensional schemes, like \eqref{SDE}, can also be considered; here we use \eqref{SDE-sec5} in order to show that the method is fairly robust and does not depend directly on the nature of the system, \eqref{SDE} being dissipative while \eqref{SDE-sec5} being conservative. The additional difficulty with respect to the previous sections is that the It\^o--Stratonovich corrector is not exactly $\nu\Delta\xi$, but is dependent of the finite-dimensional approximation, therefore we need to take care of its convergence in the limit.

\begin{lemma}
Equation \eqref{SDE-sec5} admits a unique strong solution $\xi^N$, satisfying
\begin{equation}\label{energy conservation sec5}
\mathbb{P}\big( \big\Vert \xi^N_t \big\Vert_{L^2} = \big\Vert \xi^N_0 \big\Vert_{L^2} \mbox{ for all } t\in [0,T]\big)=1.
\end{equation}
\end{lemma}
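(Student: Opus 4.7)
The plan is to exploit the finite-dimensional nature of \eqref{SDE-sec5} together with the orthogonality identities \eqref{properties} already used in Lemma \ref{energy-estimate}. First I would observe that the equation lives on the finite-dimensional space $H_N$: $b_N$ is a smooth quadratic vector field, each $G_N^k$ is linear in its argument, and only the modes $|k|\leq 2N$ contribute a nonzero $G_N^k$, so the diffusion sum is finite. Converting \eqref{SDE-sec5} to It\^o form adds the Stratonovich--It\^o corrector $\tfrac12\varepsilon_N^2\sum_k(\theta^N_k)^2 G_N^k(G_N^k(\xi^N))$, which is again a smooth polynomial vector field on $H_N$. Therefore classical theory of finite-dimensional SDEs with locally Lipschitz coefficients of at most quadratic growth yields a unique strong solution $\xi^N$ up to a maximal explosion time $\tau_N$.

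For the conservation identity, I would apply the Stratonovich chain rule directly (valid because $H_N$ is finite-dimensional and the coefficients are $C^2$) to get
$$d\|\xi^N_t\|_{L^2}^2 = -2\langle \xi^N_t, b_N(\xi^N_t)\rangle\,dt + 2\varepsilon_N\sum_{k\in\Z^2_0} \theta^N_k \langle \xi^N_t, G_N^k(\xi^N_t)\rangle\circ dW^k.$$
The identities \eqref{properties} make each term on the right-hand side vanish identically, so $\|\xi^N_t\|_{L^2}^2\equiv\|\Pi_N\xi_0\|_{L^2}^2$ on $[0,\tau_N)$. In particular, $\xi^N$ is confined to a fixed compact sphere in $H_N$, so no explosion can occur, $\tau_N=+\infty$ almost surely, and \eqref{energy conservation sec5} follows.

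There is no real obstacle here; the only point worth making precise is the use of the Stratonovich chain rule, for readers who would rather work in It\^o form. In that case, applying It\^o's formula to $\|\xi^N_t\|_{L^2}^2$ produces, besides the terms above, both the corrector contribution $\varepsilon_N^2\sum_k(\theta^N_k)^2\langle\xi^N_t,G_N^k(G_N^k(\xi^N_t))\rangle\,dt$ and the quadratic-variation contribution $\varepsilon_N^2\sum_k(\theta^N_k)^2\|G_N^k(\xi^N_t)\|_{L^2}^2\,dt$. Since $\sigma_k$ is divergence-free and $\Pi_N$ is $L^2$-self-adjoint, $G_N^k$ is antisymmetric on $H_N$, hence $\langle\xi^N_t,G_N^k(G_N^k(\xi^N_t))\rangle = -\|G_N^k(\xi^N_t)\|_{L^2}^2$, and the two contributions cancel exactly. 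This is precisely the mechanism by which \eqref{SDE-sec5} is conservative while its dissipative cousin \eqref{SDE} is not, as noted after Lemma \ref{energy-estimate}.
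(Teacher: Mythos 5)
Your proof is correct and follows essentially the same route as the paper: local well-posedness from smoothness of the coefficients on the finite-dimensional space $H_N$, the Stratonovich chain rule combined with \eqref{properties} to get pathwise conservation of $\Vert\xi^N_t\Vert_{L^2}$, and conservation in turn ruling out explosion. Your supplementary It\^o-form verification (antisymmetry of $G_N^k$ on $H_N$ forcing exact cancellation of the corrector against the quadratic-variation term) is a valid and welcome cross-check that the paper leaves implicit.
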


\begin{proof} All vector fields in \eqref{SDE-sec5} are smooth and $H_N$ is finite dimensional, so local existence and uniqueness follows. By Stratonovich chain rule,
\begin{equation}\nonumber
\d \Big( \frac{1}{2}\Vert \xi^N\Vert^2_{L^2}\Big)
= \langle \xi^N,\circ\, \d \xi^N\rangle
= -\big\langle b_N\big(\xi^N \big),\xi^N \big\rangle\,\d t + \varepsilon_N\sum_{k\in\mathbb{Z}^2_0} \theta_k^N \big\langle G_N^k\big( \xi^N\big) ,\xi^N \big\rangle\circ \d W^k = 0
\end{equation}
where the last equality follows from \eqref{properties}. This shows that $\Vert\cdot\Vert_{L^2}$ is invariant and implies global existence as well as the last statement.
\end{proof}

Next, it is clear that $\xi^N$ is a solution of \eqref{SDE-sec5} if and only if, for any $\phi\in H_N$, one has
\begin{equation}\nonumber
\d \big\langle\xi^N,\phi \big\rangle
= -\big\langle b_N\big(\xi^N \big),\phi \big\rangle\,\d t +\varepsilon_N\sum_{k\in\mathbb{Z}^2_0} \theta_k^N \big\langle G_N^k \big(\xi^N \big),\phi \big\rangle\circ\d W^k.
\end{equation}
Integration by parts and properties of the Stratonovich integral then yield
\begin{equation}\nonumber\begin{split}
\d\big\langle \xi^N,\phi \big\rangle
& = \big\langle \big(K\ast\xi^N \big)\cdot\nabla\phi,\xi^N \big\rangle\,\d t -\varepsilon_N\sum_{k\in\mathbb{Z}^2_0} \theta^N_k \big\langle \Pi_N(\sigma_k\cdot\nabla\phi),\xi^N \big\rangle\circ\d W^k\\
& = \big\langle \big(K\ast\xi^N \big)\cdot\nabla\phi,\xi^N \big\rangle\,\d t -\varepsilon_N\sum_{k\in\mathbb{Z}^2_0} \theta^N_k \big\langle \sigma_k\cdot\nabla\phi,\xi^N \big\rangle\,\d W^k + \frac{\varepsilon_N^2}{2} \big\langle C_N\phi,\xi^N \big\rangle\,\d t,
\end{split}\end{equation}
where $C_N$ is given by
\begin{equation}\label{def-C-N}
C_N\phi = \sum_{k\in\mathbb{Z}^2_0} \big(\theta_k^N \big)^2\, \sigma_k\cdot\nabla(\Pi_N(\sigma_k\cdot\nabla\phi)).
\end{equation}
Recall that for fixed $N$, the sum over $k$ has a finite amount of non zero terms, so all the above calculations (and the following) are rigorous.

It remains to compute $C_N$ explicitly, for which we need to introduce some notation. Recall that $\Pi_N$ is the orthogonal projection on $H_N$; with a slight abuse we identify it with the associated convolution kernel: $\Pi_N\xi=\Pi_N\ast \xi$. We denote the scalar product between matrices by $A:B=\text{Tr}(A^T B)$. For fixed $N$, let us define
\begin{equation}\label{covariance-matrix}
A_N(x,y):= \sum_{k\in\mathbb{Z}^2_0} \big(\theta^N_k \big)^2\,\sigma_k(x)\otimes\sigma_k(y),
\end{equation}
which is the covariance operator associated to the noise
\begin{equation}\nonumber
W_N(t,x)=\sum_{k\in\mathbb{Z}^2_0} \theta^N_k\,\sigma_k(x)\,W^k(t).
\end{equation}
It is easy to check that $A_N$ is homogeneous and it holds
\begin{equation}\nonumber
A_N(x,y)=A_N(x-y)= \sqrt{2} \sum_{k\in\mathbb{Z}^2_+} \big(\theta_k^N \big)^2\frac{k^\perp\otimes k^\perp}{\vert k\vert^2}\, e_k(x-y);
\end{equation}
in particular, identity \eqref{useful-equality} can be rewritten as
\begin{equation}\label{useful-equal}
A_N(x, x)=A_N(0)=\frac{1}{2} \big\Vert \theta^N_\cdot \big\Vert_{\ell^2}^2 I_2.
\end{equation}
Moreover, $A_N$ has Fourier transform given by
\begin{equation}\nonumber
\hat{A}_N(k) = \sqrt{2}\, \big(\theta_k^N \big)^2 \frac{k^\perp\otimes k^\perp}{\vert k\vert^2} {\bf 1}_{\{k\in \Z^2_+\}} ,
\end{equation}
which implies
\begin{equation}\nonumber
\big\Vert \hat{A}_N \big\Vert_{\ell^1} = \sqrt{2} \sum_{k\in\mathbb{Z}^2_+} \big(\theta^N_k \big)^2 = \frac{\sqrt{2}}2 \big\Vert \theta^N_\cdot \big\Vert_{\ell^2}^2.
\end{equation}
Recall the definition of the operator $C_N$ in \eqref{def-C-N}. Now we can prove

\begin{proposition}\label{fundamental lemma sec5}
It holds that
\begin{equation}\nonumber
C_N\phi(x) = (\Pi_N A_N)\ast \nabla^2\phi\, (x) = \int_{\mathbb{T}^2} \Pi_N(x-y)A_N(x-y): \nabla^2\phi(y)\,\d y
\end{equation}
and
\begin{equation}\nonumber
\Vert C_N\phi\Vert_{L^2} \leq \Vert\theta^N_\cdot\Vert_{\ell^2}^2\,\Vert \nabla^2\phi\Vert_{L^2}.
\end{equation}
Moreover, under condition \eqref{condition thm5}, for any $\phi\in C^\infty(\T^2)$,
  \begin{equation}\label{prop-key-limit}
  \lim_{N\to \infty} \frac{\eps_N^2}2 C_N\phi = \nu \Delta\phi \quad \mbox{holds in } L^2(\T^2).
  \end{equation}
\end{proposition}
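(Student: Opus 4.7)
The plan is to pass to Fourier once the convolution representation is established. For the latter, write $\Pi_N$ as a convolution with its scalar kernel $\Pi_N(z)$ and expand
$$\Pi_N(\sigma_k\cdot\nabla\phi)(x) = \int_{\T^2}\Pi_N(x-y)\,\sigma_k^i(y)\,\partial_{y_i}\phi(y)\,\d y,$$
apply $\sigma_k^j(x)\partial_{x_j}$, use $\partial_{x_j}\Pi_N(x-y) = -\partial_{y_j}\Pi_N(x-y)$, and integrate by parts in $y$. This transfers both derivatives onto $\phi$, at the price of a cross term involving $\partial_{y_j}\sigma_k^i(y)$. Summing in $k$ weighted by $(\theta^N_k)^2$, the pure second-derivative piece becomes $\Pi_N(x-y)\,A_N(x-y):\nabla^2\phi(y)$, while the cross term is $\Pi_N(x-y)\,(\partial_{y_j}A_N^{ji})(x-y)\,\partial_{y_i}\phi(y)$ and vanishes, because $A_N$ is divergence-free in each index: a direct Fourier check shows that differentiating $e_k$ produces a factor proportional to $k$, which is annihilated by its contraction with $k^\perp$. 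This establishes the first identity.

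For the remaining two claims, pass to complex-exponential Fourier; the convolution representation gives
$$\widehat{C_N\phi}(l) = -4\pi^2\,\hat\phi(l)\,\lambda_N(l),\qquad \lambda_N(l) := \sum_{\substack{k\in\Z^2_0,\,k\neq l\\ |l-k|\leq N}}(\theta^N_k)^2\,\frac{(k^\perp\cdot l)^2}{|k|^2}.$$
The key scalar input, read off from \eqref{useful-equal} coefficient-wise, is
$$\sum_{k\in\Z^2_0}(\theta^N_k)^2\,\frac{(k^\perp\cdot l)^2}{|k|^2} = l^T A_N(0)\,l = \tfrac12\|\theta^N_\cdot\|_{\ell^2}^2|l|^2,$$
so the unrestricted analogue of $\lambda_N(l)$ equals exactly $\tfrac12\|\theta^N_\cdot\|_{\ell^2}^2|l|^2$, and hence $\lambda_N(l)\leq\tfrac12\|\theta^N_\cdot\|_{\ell^2}^2|l|^2$. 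Parseval then yields $\|C_N\phi\|_{L^2}\leq \tfrac12\|\theta^N_\cdot\|_{\ell^2}^2\|\nabla^2\phi\|_{L^2}$, giving the stated inequality.

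For the limit \eqref{prop-key-limit}, use $\tfrac{\varepsilon_N^2}{2} = 2\nu/\|\theta^N_\cdot\|_{\ell^2}^2$ and $\widehat{\nu\Delta\phi}(l) = -4\pi^2\nu|l|^2\hat\phi(l)$, together with the fact that the unrestricted sum produces precisely $\nu|l|^2$, to write
$$\frac{\varepsilon_N^2}{2}\widehat{C_N\phi}(l) - \widehat{\nu\Delta\phi}(l) = \frac{8\pi^2\nu\,\hat\phi(l)}{\|\theta^N_\cdot\|_{\ell^2}^2}\sum_{k:\,|l-k|>N}(\theta^N_k)^2\,\frac{(k^\perp\cdot l)^2}{|k|^2},$$
whose modulus is bounded by $8\pi^2\nu|l|^2|\hat\phi(l)|\cdot\|\theta^N_\cdot\|_{\ell^2}^{-2}\sum_{|l-k|>N}(\theta^N_k)^2$. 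By condition \eqref{condition thm5} this tends to $0$ as $N\to\infty$ for every fixed $l$, while the full sequence $\tfrac{\varepsilon_N^2}{2}|\widehat{C_N\phi}(l)|$ is uniformly dominated by the $\ell^2$-summable sequence $C|l|^2|\hat\phi(l)|$ (by smoothness of $\phi$). Dominated convergence in $\ell^2(\Z^2_0)$ delivers the $L^2$ convergence \eqref{prop-key-limit}.

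The main obstacle lies in step (i): the naive integration by parts produces a lower-order cross term which only disappears after summation in $k$, via the divergence-free structure of $A_N$ encoded in $k\cdot k^\perp = 0$. After that, the $L^2$ bound and the limit identification amount to Fourier-side bookkeeping, with \eqref{useful-equal} fixing the value of the unrestricted sum and thereby singling out $\nu\Delta$ as the correct limiting operator.
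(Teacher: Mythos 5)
Your proof is correct and follows essentially the same route as the paper: the key cancellation $\sigma_k(x)\cdot\nabla\sigma_k(y)=0$ (i.e.\ $k^\perp\cdot k=0$) yields the convolution representation, and the remaining two claims are Fourier--multiplier computations anchored by \eqref{useful-equal}. The only cosmetic differences are that the paper obtains the representation by commuting $\Pi_N$ with $\nabla$ and differentiating directly rather than integrating by parts, bounds the multiplier via Young's inequality $\Vert\hat\Pi_N\ast\hat A_N\Vert_{\ell^\infty}\leq\Vert\hat\Pi_N\Vert_{\ell^\infty}\Vert\hat A_N\Vert_{\ell^1}$ instead of computing it explicitly, and proves \eqref{prop-key-limit} by writing $\nu\Delta=\tfrac{\eps_N^2}{2}(C_N+C_N^{\perp})$ and reducing to trigonometric polynomials, where you instead invoke dominated convergence in $\ell^2$ with the domination supplied by the same uniform bound.
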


\begin{proof}
Using the fact that $\Pi_N$ and $\nabla$ commute, we have
\begin{equation}\nonumber\begin{split}
C_N\phi(x)
& = \sum_{k\in\mathbb{Z}^2_0} \big(\theta_k^N \big)^2\, \sigma_k(x)\cdot\Pi_N[\nabla(\sigma_k\cdot\nabla\phi))](x)\\
& = \sum_{k\in\mathbb{Z}^2_0} \big(\theta_k^N \big)^2\, \int_{\mathbb{T}^2}\Pi_N(x-y)\,\sigma_k(x)\cdot\nabla(\sigma_k\cdot\nabla\phi)(y)\,\d y.
\end{split}\end{equation}
Note that $\sigma_k(x)\cdot\nabla\sigma_k(y)=0$ for all $k$, $x$ and $y$, thus by \eqref{covariance-matrix},
\begin{equation}\nonumber\begin{split}
C_N\phi(x)
& = \sum_{k\in\mathbb{Z}^2_0} \big(\theta_k^N \big)^2\, \int_{\mathbb{T}^2}\Pi_N(x-y)\,\sigma_k(x)\otimes \sigma_k(y) : \nabla^2\phi(y)\,\d y\\
& = \int_{\mathbb{T}^2} \Pi_N(x-y)A_N(x-y) : \nabla^2\phi(y)\,\d y.
\end{split}\end{equation}
Next, by Parseval identity and Young inequality we have
\begin{equation}\nonumber\begin{split}
\Vert C_N\phi\Vert_{L^2}
& = \big\Vert (\Pi_N A_N)\ast \nabla^2\phi \big\Vert_{L^2}
= \big\Vert \big(\hat{\Pi}_N\ast\hat{A}_N \big) \widehat{\nabla^2\phi} \big\Vert_{\ell^2}
\leq \big\Vert \hat{\Pi}_N\ast\hat{A}_N \big\Vert_{\ell^\infty}\, \big\Vert \widehat{\nabla^2\phi} \big\Vert_{\ell^2}\\
& \leq \big\Vert \hat{\Pi}_N \big\Vert_{\ell^\infty} \big\Vert \hat{A}_N \big\Vert_{\ell^1} \big\Vert \nabla^2\phi \big\Vert_{L^2}
\leq \big\Vert \theta^N_\cdot \big\Vert_{\ell^2}^2 \big\Vert \nabla^2\phi \big\Vert_{L^2}.
\end{split}\end{equation}

To show the last assertion, let $\Pi_N^\perp$ denote the orthogonal projection on $H_N^\perp$, which, with a slight abuse of notation, is identified with the associated convolution kernel. In this way, $\Pi_N+\Pi_N^\perp=I$ in the sense of linear operators on $L^2$ and $\Pi_N+\Pi_N^\perp = \delta $ in the sense of convolution with a distribution. Then for any fixed $N$ and any $\phi$ smooth, by \eqref{useful-equal}, it holds
\begin{equation}\nonumber\begin{split}
\nu\Delta\phi(x)
& = \frac{\varepsilon_N^2}{2} \int_{\mathbb{T}^2} A_N(x-y): \nabla^2\phi(y)\, \delta(\d y)\\
& = \frac{\varepsilon_N^2}{2} \bigg[ \int_{\mathbb{T}^2} \Pi_N(x-y)A_N(x-y): \nabla^2\phi(y)\,\d y
+ \int_{\mathbb{T}^2} \Pi^\perp_N(x-y)A_N(x-y): \nabla^2\phi(y)\,\d y \bigg] \\
& =: \frac{\varepsilon_N^2}{2} C_N\phi + \frac{\varepsilon_N^2}{2} C_N^\perp \phi.
\end{split}\end{equation}
Assertion \eqref{prop-key-limit} then is equivalent to showing that $\varepsilon_N^2 C_N^\perp \phi\to 0$ as $N\to\infty$. By an approximation argument, we can take $\phi$ to be a finite linear combination of $e^{-i 2\pi j\cdot x},\, j\in \mathbb{Z}_0^2$. In this case, it is enough to prove that, for any $j\in \Z_0^2$, $\varepsilon_N^2 C_N^\perp e^{-i 2\pi j\cdot x} \to 0$ as $N\to \infty$. We have
\begin{equation}\nonumber\begin{split}
\varepsilon_N^2 \big\Vert C_N^\perp e^{-i 2\pi j\cdot x} \big\Vert_{L^2}
= 4\pi^2 \varepsilon_N^2 \Big\vert \widehat{A_N \Pi_N^\perp}(j) : (j\otimes j)\Big\vert
\leq K|j|^2 \big\Vert \theta^N_\cdot \big\Vert_{\ell^2}^{-2} \sum_{k:\vert k-j\vert>N} \big(\theta_k^N \big)^2.
\end{split}\end{equation}
This shows that, under condition \eqref{condition thm5}, claim \eqref{prop-key-limit} holds and the proof is complete.
\end{proof}

It follows from Proposition \ref{fundamental lemma sec5} and our choice \eqref{eps-N} of $\varepsilon_N$ that, for any $\phi\in H_N$, one has
\begin{equation}\label{estimate approx. corrector sec5}
\frac{\varepsilon_N^2}{2} \Vert C_N\phi\Vert_{L^2} \leq 2\nu \big\Vert \nabla^2\phi \big\Vert_{L^2}.
\end{equation}
This provides a uniform control on the correctors $C_N$ for $N\in \N$. Let $Q^N$ denote the law of $\xi^N$, solution to \eqref{SDE-sec5}, then we can prove the following:

\begin{lemma}
The family $\big\{ Q^N \big\}_N$ is tight in $C\big([0,T];H^-(\mathbb{T}^2) \big).$
\end{lemma}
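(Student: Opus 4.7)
The plan is to mimic the Aubin--Lions--Simon compactness argument already used in Sections \ref{sec3} and \ref{sec4}, with the only essential novelty being how to handle the It\^o--Stratonovich corrector, which here is $\frac{\varepsilon_N^2}{2}C_N$ rather than $\nu\Delta$. First I would invoke the compact embedding
\begin{equation}\nonumber
L^p(0,T; L^2) \cap W^{1/3,4}\big(0,T; H^{-1-\beta}\big) \subset C\big([0,T]; H^{-\delta}\big)
\end{equation}
valid for $\beta>4$, $\delta\in(0,1)$ and $p$ large enough (as recalled before Theorem \ref{thm-simon}). The uniform bound in $L^p(0,T; L^2)$ follows at once from the pathwise enstrophy conservation \eqref{energy conservation sec5}, so it suffices to establish
\begin{equation}\label{plan:frac-sob}
\sup_{N\geq 1}\mathbb{E}\int_0^T\!\int_0^T \frac{\big\Vert \xi^N_t-\xi^N_s\big\Vert^4_{H^{-1-\beta}}}{\vert t-s\vert^{7/3}}\,\d t\,\d s < \infty,
\end{equation}
and then apply Chebyshev plus the same Cauchy--Schwarz-in-Fourier computation following Lemma \ref{lem-estimate} to reduce matters to a Fourier-mode increment estimate.

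The core step I would carry out is therefore an analogue of Lemma \ref{lem-estimate 2} for the present scheme: show that
\begin{equation}\nonumber
\mathbb{E}\big(\big\<\xi^N_t-\xi^N_s,e_k\big\>^4\big) \leq C\,\vert k\vert^8\,\vert t-s\vert^2 \quad \forall\, k\in\mathbb{Z}^2_0,
\end{equation}
uniformly in $N$. Since $\xi^N$ takes values in $H_N$, only $\vert k\vert\leq N$ matters. Using the It\^o form derived before Proposition \ref{fundamental lemma sec5} tested against $e_k\in H_N$, one has
\begin{equation}\nonumber
\big\<\xi^N_t-\xi^N_s,e_k\big\> = \int_s^t\big\<(K\ast\xi^N_r)\cdot\nabla e_k,\xi^N_r\big\>\,\d r + \frac{\varepsilon_N^2}{2}\int_s^t \big\<C_N e_k,\xi^N_r\big\>\,\d r - \varepsilon_N\sum_{l\in\mathbb{Z}^2_0}\theta^N_l\int_s^t \big\<\sigma_l\cdot\nabla e_k,\xi^N_r\big\>\,\d W^l_r,
\end{equation}
and I would bound the fourth moment of each of the three summands separately. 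The first (nonlinear) and third (stochastic) terms are estimated exactly as in Lemma \ref{lem-estimate}, the only prefactor to track in the Burkholder bound being $\varepsilon_N^4 \Vert\theta^N_\cdot\Vert_{\ell^\infty}^4$, which is bounded uniformly thanks to \eqref{theta-N} and \eqref{eps-N} (as noted at the beginning of Section \ref{sec4}). The second (corrector) term is where the genuinely new ingredient enters: applying \eqref{estimate approx. corrector sec5} yields $\frac{\varepsilon_N^2}{2}\Vert C_N e_k\Vert_{L^2}\leq 2\nu\Vert \nabla^2 e_k\Vert_{L^2}\leq C\vert k\vert^2$, hence by H\"older and \eqref{energy conservation sec5}
\begin{equation}\nonumber
\mathbb{E}\bigg|\frac{\varepsilon_N^2}{2}\int_s^t\big\<C_N e_k,\xi^N_r\big\>\,\d r\bigg|^4 \leq \vert t-s\vert^3\int_s^t \Big(\tfrac{\varepsilon_N^2}{2}\Big)^4\Vert C_N e_k\Vert_{L^2}^4\,\mathbb{E}\Vert\xi^N_r\Vert_{L^2}^4\,\d r \leq C\vert k\vert^8\vert t-s\vert^4.
\end{equation}

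Combining the three estimates and proceeding as in the calculation after Lemma \ref{lem-estimate} gives \eqref{plan:frac-sob}, hence tightness in $C([0,T];H^{-\delta})$ for every $\delta\in (0,1)$, and consequently in $C([0,T];H^-)$. The only potential obstacle I anticipate is the corrector term: a naive application of $\Vert C_N\phi\Vert_{L^2}\lesssim \Vert\theta^N_\cdot\Vert_{\ell^2}^2\Vert\nabla^2\phi\Vert_{L^2}$ from Proposition \ref{fundamental lemma sec5} would produce a divergent factor $\varepsilon_N^2\Vert\theta^N_\cdot\Vert_{\ell^2}^2 = 4\nu$, but the sharper grouping \eqref{estimate approx. corrector sec5}, which builds the $\varepsilon_N^2/2$ prefactor into the norm of $C_N$, precisely absorbs this and yields the $N$-independent constant we need. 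No condition beyond \eqref{theta}--\eqref{eps-N} is used in this lemma; in particular \eqref{condition thm5} is only needed later for identifying the limit.
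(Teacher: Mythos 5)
Your proposal is correct and follows essentially the same route as the paper: reduction via the Simon embedding and the energy identity \eqref{energy conservation sec5} to the fourth-moment increment estimate $\mathbb{E}\big(\big\langle\xi^N_t-\xi^N_s,e_k\big\rangle^4\big)\leq C\vert k\vert^8\vert t-s\vert^2$, with the nonlinear and martingale terms handled as in Lemma \ref{lem-estimate} and the corrector controlled through \eqref{estimate approx. corrector sec5}, which is exactly the paper's argument. Your closing observation that the grouping $\frac{\varepsilon_N^2}{2}\Vert C_N e_k\Vert_{L^2}\leq 2\nu\Vert\nabla^2 e_k\Vert_{L^2}$ is what makes the bound $N$-independent, and that \eqref{condition thm5} is not needed for tightness, is precisely the point the paper relies on.
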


\begin{proof}
We only sketch the proof briefly since most of the calculations are identical to those of Section \ref{sec3}. Indeed by the energy equality \eqref{energy conservation sec5} and Theorem \ref{thm-simon}, we only need to show that there exists a constant $C$ such that, for any $N\geq 1$,
\begin{equation}\nonumber
\mathbb{E}\big( \big\langle \xi^N_t-\xi^N_s,e_k \big\rangle^4 \big) \leq C\vert k\vert^8\vert t-s\vert^2\quad \text{for all } k\in\mathbb{Z}^2_0;
\end{equation}
again, we only need to show the estimate for $\vert k\vert\leq N$ and by Lemma \ref{fundamental lemma sec5} it holds
\begin{equation}\nonumber\begin{split}
\big\langle \xi^N_t-\xi^N_s,e_k \big\rangle
= & \int_s^t \big\langle \big(K\ast\xi_r^N \big)\cdot\nabla e_k,\xi_r^N \big\rangle\,\d r + \frac{\varepsilon_N^2}{2}\int_s^t \big\langle C_N e_k,\xi^N_r \big\rangle\,\d r\\
& -\varepsilon_N\sum_{k\in\mathbb{Z}^2_0} \theta_k^N\int_s^t \big\langle\sigma_k\cdot\nabla e_k,\xi_r^N \big\rangle\,\d W^k_r.
\end{split}\end{equation}
The first and the last term on the right hand side can be estimated similarly to Lemma \ref{lem-estimate} using respectively the H\"older and Burkholder inequality. For the term involving the corrector $C_N$, thanks to the energy identity \eqref{energy conservation sec5} and estimate \eqref{estimate approx. corrector sec5}, we have
\begin{equation}\nonumber\begin{split}
\Big\vert \frac{\varepsilon_N^2}{2}\int_s^t \big\langle C_Ne_k,\xi_r^N \big\rangle\,\d r\Big\vert
& \leq \frac{\varepsilon_N^2}{2} \Vert C_Ne_k\Vert_{L^2}\int_s^t \big\Vert \xi_r^N \big\Vert_{L^2}\,\d r\\
& \leq 2\nu \Vert \nabla^2 e_k\Vert_{L^2} \vert t-s\vert\, \Vert \xi_0\Vert_{L^2}
\leq C\vert k\vert^2 \vert t-s\vert,
\end{split}\end{equation}
which implies the conclusion.
\end{proof}

We are now ready to complete the

\begin{proof}[Proof of Theorem \ref{thm sec5}]
We only sketch the proof, highlighting the passages which require to be handled differently from the previous sections. Observe first of all that $\big\{\xi^N \big\}_{N\geq 1}$ is a sequence of variables all defined on the same probability space, therefore convergence in probability to a deterministic limit is equivalent to convergence in law to it. As the sequence $\big\{Q^N \big\}_{N\geq 1}$ is tight, it suffices to show that any weakly convergent subsequence we extract converges to $\delta_{\xi_\cdot}$, $\xi$ being the
unique solution of \eqref{NSE}. Assume we have extracted a (not relabelled) subsequence $\xi^N$ whose laws $Q^N$ are converging in the topology of $C\big([0,T];H^-(\mathbb{T}^2) \big)\cap L^2_{R,w}$ to the law  $Q$ of a random variable $\tilde\xi$. Then $\Pi_N \xi_0 \to \xi_0$ in $L^2$ and the convergence of the nonlinear term and the stochastic integral can be treated in the same way as in Section \ref{sec4}. Finally, given a countable dense set $\{\phi_n\}_n$, Proposition \ref{fundamental lemma sec5} implies that, for all $n$,
\begin{equation}\nonumber
\frac{\varepsilon_N^2}{2}\int_0^\cdot \big\langle C_N \phi_n,\xi^N_s \big\rangle\,\d s \to \nu \int_0^\cdot \big\langle \Delta \phi_n, \xi^N_s \big\rangle\,\d s\quad \text{in law}.
\end{equation}
Thus the proof is complete.
\end{proof}

\begin{remark}\label{rem end sec5}
In this case the tightness of $\big\{Q^N \big\}_N$ in $C([0,T];H^-)$ is optimal, in the sense that it is not possible to prove tightness in $C\big([0,T]; L^2\big)$. Indeed, if this were true, since the sequence $\xi^N$ satisfies \eqref{energy conservation sec5}, the same should hold for the limit $\xi$, namely $\Vert\xi_t\Vert_{L^2}$ being constant; but we know that $\xi$ is a solution of Navier--Stokes equation, which is dissipative.
\end{remark}

\section{Consequences of the scaling limit} \label{sec-consequences}

In this section we discuss some implications of our scaling limit on the stochastic 2D Euler equations \eqref{SEE-approx-formulation}, including the approximate weak uniqueness, the existence of recovery sequences for Euler equations and a ``weak quenched mixing property'' of the weak solutions. We also give a discussion on possible dissipation of enstrophy in Section \ref{subsec-anomalous}.

\subsection{Approximate uniqueness}\label{subsec-uniqueness}

Uniqueness of solutions for 2D Euler equations when vorticity is in $L^{2}$ is
a famous open problem. In view of certain regularization by noise results,
where uniqueness is restored by a suitable noise, it is natural to ask whether
a suitable noise may provide uniqueness, at least in law, for the solution of
the corresponding stochastic 2D Euler equations with vorticity in $L^{2}$.
We cannot prove such a strong result but we identify a new kind of property
which we may call ``approximate uniqueness'' in law. The precise statement is
given in Corollary \ref{corollary approx uniq} below; roughly speaking it
claims that all different solutions of a suitable stochastic 2D Euler
equations, with a given initial vorticity in $L^{2}$, are
very close to each other in law; for any degree of closedness we find a noise
with such property.

On the family of all Borel probability measures on $C\left(  \left[
0,T\right]  ;H^{-}\right)  $, let $d\left(  \cdot,\cdot\right)  $ be a
distance that metrizes weak convergence.

We fix $\xi_0\in L^2$. For every $N$, let $\mathcal{C}_{N}$ be the class of all weak solutions of
equation \eqref{SEE-approx-formulation} with the initial condition $\xi_0\in L^2$ and satisfying \eqref{energy estimate thm-existence}; moreover, let $\mathcal{C}= \bigcup_{N\in\mathbb{N}} \, \mathcal{C}_{N}$. We denote by $Q_{N}$ the elements of $\mathcal{C}_{N}$ and
generically by $Q$ those of $\mathcal{C}$, interpreting weak solutions as
measures on the path space $C\left(  \left[  0,T\right]  ;H^{-}\right)  $.

\begin{definition}\label{definition approx uniq}
The family of weak solutions $\left\{  Q;Q\in\mathcal{C}\right\}  $ is said to converge
to a probability measure $\mu$ on $C\left(  \left[  0,T\right]  ;H^{-}\right)
$ if, for every $\epsilon>0$, there is $N_{0}\in\mathbb{N}$ such that for all
$N\geq N_{0}$, it holds $d\left(  Q_{N},\mu\right)  <\epsilon$ for all $Q_{N}%
\in\mathcal{C}_{N}$.
\end{definition}

\begin{theorem}\label{thm approx uniq}
Given $\xi_{0}\in L^{2}$, the family of weak solutions $\left\{
Q;Q\in\mathcal{C}\right\}  $ converges to $\delta_{\xi}$ on $C\left(  \left[
0,T\right]  ;H^{-}\right)  $, where $\xi$ is the unique solution of the
deterministic Navier--Stokes equations \eqref{NSE}.
\end{theorem}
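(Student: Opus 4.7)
My plan is to argue by contradiction, reducing the statement to a reapplication of Theorem \ref{thm-main} along an arbitrary selection of weak solutions. Suppose the family $\{Q;Q\in\mathcal{C}\}$ fails to converge to $\delta_\xi$ in the sense of Definition \ref{definition approx uniq}. Then there exist $\epsilon>0$, a strictly increasing sequence $N_k\to\infty$, and weak solutions $Q_{N_k}\in\mathcal{C}_{N_k}$ such that $d(Q_{N_k},\delta_\xi)\geq\epsilon$ for every $k$. The goal is to show that, nevertheless, $Q_{N_k}\to\delta_\xi$ weakly on $C([0,T];H^-)$, yielding a contradiction.

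The first step is to observe that the entire argument for tightness carried out in Section \ref{sec4} uses only the uniform energy bound \eqref{energy estimate thm-existence} (which every $Q_{N_k}\in\mathcal{C}_{N_k}$ satisfies by assumption) and the structure of equation \eqref{SEE-approx-formulation}; it does not depend on any particular choice of solution. In particular, Lemma \ref{lem-estimate 2} and the subsequent computation giving the uniform bound \eqref{sect-4-bound} apply verbatim to any realization $\xi^{N_k}$ whose law is $Q_{N_k}$. Hence $\{Q_{N_k}\}_{k\geq 1}$ is tight in $C([0,T];H^{-\delta})$ for every $\delta\in(0,1)$, and, via the compactness of the space $L^2_{R,w}$ defined in \eqref{weak-space} with $R\geq \sqrt{T}\|\xi_0\|_{L^2}$, also in $L^2(0,T;H^{-1})\cap L^2_{R,w}$.

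By Prohorov's theorem I extract a (non-relabelled) weakly convergent subsequence $Q_{N_k}\to Q$ on $L^2(0,T;H^{-1})\cap L^2_{R,w}$. The second step is to identify $Q$. For this I repeat the argument in the proof of Theorem \ref{thm-main}: each $\xi^{N_k}$ satisfies $F_\phi(\xi^{N_k})=M^{N_k}_\phi$ for every $\phi\in C^\infty(\mathbb{T}^2)$, where $F_\phi$ is the continuous bounded map introduced in Lemma \ref{lem continuity sec 4} and $M^{N_k}_\phi$ is the martingale appearing in \eqref{SEE-approx-formulation}. The It\^o isometry estimate in Section \ref{sec4} shows that
\begin{equation}\nonumber
\mathbb{E}\int_0^T \big|M^{N_k}_\phi(t)\big|^2\,\d t \leq T^2 \|\xi_0\|_{L^2}^2\,\|\nabla\phi\|_{L^\infty}^2\,\varepsilon_{N_k}^2 \big\|\theta^{N_k}_\cdot\big\|_{\ell^\infty}^2 \to 0,
\end{equation}
because \eqref{theta-N} and \eqref{eps-N} together force $\varepsilon_{N_k}\|\theta^{N_k}_\cdot\|_{\ell^\infty}\to 0$; this vanishing depends only on the scaling parameters and not on the chosen solution. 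Hence $F_\phi(\xi^{N_k})\to 0$ in law, and by continuity of $F_\phi$ on $L^2(0,T;H^{-1})\cap L^2_{R,w}$ the limit $Q$ is supported on functions satisfying $F_\phi(\xi)=0$ for every $\phi$ in a countable dense subset of $C^\infty(\mathbb{T}^2)$, hence for all $\phi$. By the uniqueness part of the classical 2D Navier--Stokes theory, $Q=\delta_\xi$.

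This contradicts $d(Q_{N_k},\delta_\xi)\geq\epsilon$, since weak convergence of $Q_{N_k}$ to $\delta_\xi$ on $L^2(0,T;H^{-1})\cap L^2_{R,w}$, together with tightness in $C([0,T];H^-)$, yields weak convergence in $C([0,T];H^-)$ as well (any further subsequential limit on $C([0,T];H^-)$ has the same finite-dimensional distributions as $\delta_\xi$). I expect the only delicate point to be precisely this last identification of topologies: making sure that tightness plus uniqueness of the limit in the weaker space does transfer to convergence in the distance $d$ on $C([0,T];H^-)$, but this is a standard consequence of uniqueness of subsequential limits combined with Prohorov's theorem. Everything else is a direct rerun of the arguments already established in Sections \ref{sec3} and \ref{sec4}, which makes the result essentially a corollary of Theorem \ref{thm-main} phrased uniformly over $\mathcal{C}_N$.
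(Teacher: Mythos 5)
Your proposal is correct and follows essentially the same route as the paper: argue by contradiction, observe that the tightness and limit-identification arguments of Section \ref{sec4} depend only on the uniform energy bound \eqref{energy estimate thm-existence} and the scaling of $\varepsilon_N\|\theta^N_\cdot\|_{\ell^\infty}$ (not on the particular choice of solution in $\mathcal{C}_{N}$), extract a weakly convergent subsequence, identify its limit as $\delta_\xi$, and contradict $d(Q_{N_k},\delta_\xi)\geq\epsilon$. You actually spell out more detail than the paper does, including the transfer of convergence from $L^2(0,T;H^{-1})\cap L^2_{R,w}$ back to $C([0,T];H^-)$, which the paper leaves implicit.
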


\begin{proof}
We argue by contradiction. Assume there is $\epsilon>0$ such that for every
$k\in\mathbb{N}$ there exist $N_{k}\geq k$ and $Q_{N_{k}}\in\mathcal{C}%
_{N_{k}}$ with the property $d\left(  Q_{N_{k}},\delta_{\xi}\right)
\geq\epsilon$. The family $\left\{  Q_{N_{k}}\right\}  _{k\in\mathbb{N}}$ is
tight on $C([0,T], H^-)$ (for reasons similar to those proved above for a generic sequence of the
form $\{  Q_{n} \}  _{n\in\mathbb{N}}$). Hence it has a subsequence
$\big\{  Q_{N_{k_{l}}}\big\}  _{l\in\mathbb{N}}$, where we may assume
$\left\{  N_{k_{l}}\right\}  $ increasing, which converges weakly, thus to
$\delta_{\xi}$ by the argument developed above. \ This is in contradiction with
$d\left(  Q_{N_{k}},\delta_{\xi}\right)  \geq\epsilon$ for every
$k\in\mathbb{N}$.
\end{proof}

\begin{corollary}
\label{corollary approx uniq} For every $\epsilon>0$, there is $N_{0}%
\in\mathbb{N}$ such that for all $N\geq N_{0}$, we have $d\left(  Q_{N}%
,Q_{N}^{\prime}\right)  <\epsilon$ for all $Q_{N},Q_{N}^{\prime}\in
\mathcal{C}_{N}$.
\end{corollary}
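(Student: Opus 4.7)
The plan is to derive this corollary directly from Theorem \ref{thm approx uniq} via the triangle inequality for the distance $d$. The key observation is that Theorem \ref{thm approx uniq} provides \emph{uniform} closeness: every weak solution in $\mathcal{C}_N$ is close to the single deterministic measure $\delta_\xi$ once $N$ is large enough. Once we have this common target, closeness between any two solutions in $\mathcal{C}_N$ follows immediately.

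More precisely, given $\epsilon > 0$, I would apply Theorem \ref{thm approx uniq} with $\epsilon/2$ in place of $\epsilon$ to obtain $N_0 \in \mathbb{N}$ such that for every $N \geq N_0$ and every $Q_N \in \mathcal{C}_N$,
\begin{equation}\nonumber
d(Q_N, \delta_\xi) < \frac{\epsilon}{2}.
\end{equation}
Then for any two elements $Q_N, Q_N' \in \mathcal{C}_N$ with $N \geq N_0$, the triangle inequality for $d$ yields
\begin{equation}\nonumber
d(Q_N, Q_N') \leq d(Q_N, \delta_\xi) + d(\delta_\xi, Q_N') < \frac{\epsilon}{2} + \frac{\epsilon}{2} = \epsilon,
\end{equation}
which is exactly the claim.

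There is essentially no obstacle here, since all the substantive work has been done in establishing Theorem \ref{thm approx uniq}, which in turn rests on the tightness of $\{Q_{N_k}\}$ on $C([0,T];H^-)$ and on the identification of every subsequential limit with the unique solution of the Navier--Stokes equation \eqref{NSE}. The only point worth emphasizing is that $d$ is assumed to metrize weak convergence on the space of Borel probability measures on $C([0,T];H^-)$, so in particular it is a genuine metric satisfying the triangle inequality; this is what allows the reduction of pairwise closeness to closeness to a common reference measure. The corollary thus functions as a clean restatement of Theorem \ref{thm approx uniq} emphasizing that the approximate uniqueness does not privilege the deterministic limit but applies among all solutions of the $N$-th stochastic Euler equation.
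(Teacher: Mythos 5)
Your proof is correct and is exactly the argument the paper uses: apply Theorem \ref{thm approx uniq} with $\epsilon/2$ and conclude by the triangle inequality for $d$. Nothing further is needed.
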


\begin{proof}
It follows from the previous theorem by triangle inequality.
\end{proof}

\begin{remark}\label{remark wassertein dist}
If we denote by $d_p$ the $p$-th Wasserstein distance for Borel probability measures on $C([0,T];H^-)$, then Theorem \ref{thm-main} implies convergence of $Q_N$ to $\delta_{\xi}$ in the $p$-th Wasserstein distance, for any $p<\infty$. To see this, we can consider by Skorokhod Theorem a sequence $\tilde{\xi}^N$ distributed as $Q_N$, converging $\tilde{\mathbb{P}}$-a.s. to $\xi$ and satisfying the energy bound \eqref{energy estimate thm-existence}; by dominated convergence this implies
$$\lim_{N\to\infty} \tilde{\mathbb{E}}\Big[ \big\Vert \tilde\xi^N-\xi \big\Vert_{C([0,T];H^{-\delta})}^p\Big]\to 0$$
for any $\delta>0$ and $p<\infty$. In particular it is easy to see that Definition \ref{definition approx uniq}, Theorem \ref{thm approx uniq} and Corollary \ref{corollary approx uniq} still hold if we replaced $d$ by $d_p$.
\end{remark}

\subsection{Recovery sequences for Euler equations}\label{sec 6.2}

We are now going to show that, given any viscosity solution $\xi$ of Euler equations, we can find a suitable sequence $\xi^N$ of solutions of \eqref{SEE-approx-formulation} such that their laws $Q_N$ converge to $\delta_\xi$. This may be seen as a result of existence of recovery sequences, in a nice parallelism with the theory of $\Gamma$-convergence; we stress however that no variational problems are involved in our setting and this is merely an analogy. This result may help understanding the structure of viscosity solutions of Euler equations, deducing their properties from those of the sequence $\big\{ \xi^N \big\}_{N\in \N}$.

We consider a fixed sequence $\theta^N\in \ell^2$ satisfying the usual conditions and a fixed initial data $\xi_0\in L^2$. However we now allow the parameter $\nu$ to vary on $(0,+\infty)$; for fixed $\nu$, $\varepsilon_N$ depends on $\nu$ and $\theta^N$ in the usual way. We denote by $\xi^\nu$ the unique solution of Navier--Stokes with initial data $\xi_0$ and coefficient $\nu$; as in the previous section, we identify any solution of \eqref{SEE-approx-formulation} satisfying \eqref{energy estimate thm-existence} with a Borel probability measure on $C([0,T];H^-)$ and we denote by $d(\cdot,\cdot)$ the distance which metrizes weak convergence. We denote by $\mathcal{C}_{N,\nu}$ the set of laws of weak solutions of \eqref{SEE-approx-formulation} satisfying \eqref{energy estimate thm-existence}, with initial data $\xi_0$ and with respect to the parameters $\theta^N$, $\nu$; a generic element of $\mathcal{C}_{N,\nu}$ is denoted by $Q_{N,\nu}$.

We define $\mathcal{H}$ to be the set of viscosity solutions of Euler equations with initial data $\xi_0$, namely $\xi\in \mathcal{H}$ if there exists a sequence $\nu_n\to 0$ such that $\xi^{\nu_n}\to \xi$ in $C([0,T];H^-)$; if uniqueness of viscosity solutions of Euler were true, than $\mathcal{H}$ would consist of a singleton.

\begin{corollary}\label{cor sec 6.2}
For any $\xi\in \mathcal{H}$ there exist sequences $\nu_i\downarrow 0$, $N_i\uparrow\infty$ such that
$$ \lim_{i\to\infty} d(Q_{N_i,\nu_i},\delta_\xi) = 0.$$
\end{corollary}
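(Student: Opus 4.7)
The plan is a straightforward diagonal extraction combining two convergences: the vanishing viscosity convergence built into the definition of $\mathcal{H}$, and the scaling limit provided by Theorem \ref{thm-main} applied at each fixed viscosity. The metric $d$ metrizes weak convergence on the Polish space of Borel probability measures on $C([0,T];H^-)$; in particular, if $f_n\to f$ in $C([0,T];H^-)$, then $\delta_{f_n}\to \delta_f$ weakly, so $d(\delta_{f_n},\delta_f)\to 0$.

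Unpacking $\xi\in\mathcal{H}$, I obtain a sequence $\nu_n\downarrow 0$ with $\xi^{\nu_n}\to \xi$ in $C([0,T];H^-)$, hence $d(\delta_{\xi^{\nu_n}},\delta_\xi)\to 0$. Next, for each fixed $n$, the sequence $\theta^N_\cdot$ satisfies \eqref{theta}--\eqref{theta-N} (these conditions do not involve $\nu$) and the parameter $\varepsilon_N=\varepsilon_N(\nu_n)$ defined by \eqref{eps-N} is the one required in Theorem \ref{thm-main} with viscosity $\nu_n$. Thus Theorem \ref{thm-main} applies and yields that every element $Q_{N,\nu_n}\in\mathcal{C}_{N,\nu_n}$ converges weakly to $\delta_{\xi^{\nu_n}}$ as $N\to\infty$. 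In particular, I can choose $N_n\geq \max\{n,N_{n-1}+1\}$ (ensuring $N_n\uparrow\infty$ strictly) and then select any $Q_{N_n,\nu_n}\in\mathcal{C}_{N_n,\nu_n}$ satisfying
\[
d\bigl(Q_{N_n,\nu_n},\delta_{\xi^{\nu_n}}\bigr)<\frac{1}{n}.
\]

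A triangle inequality then closes the argument: for every $n\in\mathbb{N}$,
\[
d\bigl(Q_{N_n,\nu_n},\delta_\xi\bigr) \;\leq\; d\bigl(Q_{N_n,\nu_n},\delta_{\xi^{\nu_n}}\bigr)+ d\bigl(\delta_{\xi^{\nu_n}},\delta_\xi\bigr) \;\leq\; \frac{1}{n}+ d\bigl(\delta_{\xi^{\nu_n}},\delta_\xi\bigr)\;\longrightarrow\;0,
\]
which proves the claim with the sequences $\nu_i:=\nu_i$ and $N_i:=N_i$. There is essentially no obstacle here: the only mildly delicate point is that Theorem \ref{thm-main} must be invoked separately for each $\nu_n$ (since the associated $\varepsilon_N$ depends on $\nu_n$), which forces the diagonal extraction rather than a uniform-in-$\nu$ choice of $N$. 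If the limit $\xi\in\mathcal{H}$ is actually attained along a subsequence of a given sequence $\nu_n\downarrow 0$, the same argument applies verbatim after passing to that subsequence.
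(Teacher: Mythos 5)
Your proposal is correct and follows essentially the same route as the paper: a diagonal choice of $N_i$ for the sequence $\nu_i\downarrow 0$ coming from the definition of $\mathcal{H}$, closed by the triangle inequality. The only cosmetic difference is that you invoke Theorem \ref{thm-main} directly for each fixed $\nu_n$, whereas the paper cites its uniform restatement (Theorem \ref{thm approx uniq}); both give exactly the selection of $Q_{N_i,\nu_i}$ with $d(Q_{N_i,\nu_i},\delta_{\xi^{\nu_i}})\leq 1/i$ that the argument needs.
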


\begin{proof} Since $\xi\in\mathcal{H}$, there exists a sequence $\nu_i\downarrow 0$ such that $\xi^{\nu_i}\to \xi$. By Theorem \ref{thm approx uniq}, for fixed $\nu_i$, we can find $N_i$ and an element $Q_{N_i,\nu_i}$ such that $d(Q_{N_i,\nu_i} , \delta_{\xi^{\nu_i}})\leq 1/i$; moreover, since we can construct the sequence inductively, we can always take $N_{i+1}\geq N_i$. Then by the triangle inequality,
$$ d(Q_{N_i,\nu_i},\delta_\xi)\leq d(Q_{N_i,\nu_i},\delta_{\xi^{\nu_i}}) + d(\delta_{\xi^{\nu_i}},\delta_{\xi}) \leq \frac{1}{i} + \Vert \xi^{\nu_i} -\xi\Vert_{C([0,T];H^-)}$$
and the conclusion follows.
\end{proof}

\begin{remark}
Similarly to Remark \ref{remark wassertein dist}, the result still holds if we work with the $p$-th Wasserstein distance $d_p$ instead of $d$, for any $p<\infty$.
\end{remark}

Next, we consider two sequences $\nu_i\to 0$ and $N_i\to\infty$, and for any $i$ an element $Q_{N_i,\nu_i}\in \mathcal{C}_{N_i,\nu_i}$. Using the same arguments in the previous sections, tightness of $\{Q_{N_i,\nu_i}\}_i$ in $C([0,T];H^-)$ can be shown; by Prohorov theorem we can therefore extract a subsequence which is weakly convergent to some probability law $Q$. Then, repeating the arguments in Section \ref{sec4} and observing that this time also the corrector $\nu_i\Delta$ is infinitesimal, we find that almost every realization of $Q$ is a weak solution of deterministic Euler equations with initial data $\xi_0$. Since uniqueness in this case is not known, we cannot conclude that $Q$ is of the form $\delta_\xi$; rather it is a probability distribution on the weak solutions of Euler equation starting at $\xi_0$ -- a \textit{superposition solution}. Observe that in the above argument in principle we did not need to vary $N$: convergence of a subsequence to a superposition solution of deterministic Euler equations also holds if we considered a sequence $Q_{N,\nu_i}\in\mathcal{C}_{N,\nu_i}$ with $N$ fixed. However, the scaling limits we have obtained suggest that varying $N$ should allow to deduce non trivial properties in the limit which are not necessarily present for $N$ fixed; in particular, Corollary \ref{cor sec 6.2} leads us to the following conjecture.

\begin{conjecture}
For any weakly convergent sequence $\{Q_{N_i,\nu_i}
\}_i$, the limit $Q$ is a probability measure supported on $\mathcal{H}$, the set of viscosity solutions of Euler equations starting at $\xi_0$.
\end{conjecture}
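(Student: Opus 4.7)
The plan is to attack the conjecture by upgrading Theorem~\ref{thm-main} to a quantitative statement and then diagonalising. Inspecting the proof in Section~\ref{sec4}, the key input was the It\^o isometry bound $\mathbb{E}\int_0^T |M^N_\phi(t)|^2\,\d t \leq T^2 \|\xi_0\|_{L^2}^2 \|\nabla\phi\|_{L^\infty}^2 \varepsilon_N^2 \|\theta^N_\cdot\|_{\ell^\infty}^2$, combined with the continuity and uniform boundedness of the deterministic map $F_\phi$ on $L^2_{R,w}$ from Lemma~\ref{lem continuity sec 4}. Using the pathwise bound $\|\xi^{N}_t\|_{L^2}\leq \|\xi_0\|_{L^2}$ from Theorem~\ref{thm-existence} together with the stability of 2D Navier--Stokes for fixed $\nu>0$, one can try to promote this into an explicit rate $d(Q_{N,\nu},\delta_{\xi^\nu}) \leq g(N,\nu)$ with $g(N,\nu)\to 0$ as $N\to\infty$ for each fixed $\nu$.

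Granted such an estimate, proceed as follows. Let $\{Q_{N_i,\nu_i}\}$ be a weakly convergent sequence with $\nu_i\downarrow 0$, $N_i\uparrow\infty$ and limit $Q$. By the classical tightness of the family $\{\xi^\nu\}_{\nu>0}$ of 2D Navier--Stokes solutions on $C([0,T];H^-)$, extract a subsequence (not relabelled) along which $\xi^{\nu_i}\to \xi_\star\in \mathcal{H}$; by triangle inequality, $d(Q,\delta_{\xi_\star}) \leq \limsup_i \big[ d(Q,Q_{N_i,\nu_i}) + g(N_i,\nu_i) + \|\xi^{\nu_i}-\xi_\star\|_{C([0,T];H^-)}\big]$. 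If one could ensure $g(N_i,\nu_i)\to 0$, then $Q=\delta_{\xi_\star}$ is supported on $\mathcal{H}$ and the conjecture follows.

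The main obstacle is exactly that $(N_i,\nu_i)$ is \emph{prescribed}: the rate $g(N_i,\nu_i)$ may blow up in $\nu_i$ (standard stability arguments for 2D Navier--Stokes with only $L^2$-vorticity yield constants of order $e^{C/\nu}$ or worse) faster than it decays in $N_i$, so the diagonal may fail along arbitrary sequences. A complementary line would be to extract from the approximants $\xi^{N_i,\nu_i}$ a structural property that survives in the limit: passing to the limit in the enstrophy identity (tracking $\nu_i \int_0^t \|\nabla \xi^{N_i,\nu_i}_s\|_{L^2}^2\,\d s$ together with the vanishing noise-induced martingale) one should hope to exhibit a non-negative defect measure $\mu\geq 0$ with $\partial_t (|\xi|^2/2)+\div(u\,|\xi|^2/2)=-\mu$ in $\mathcal{D}'$, forcing $Q$ to be supported on Duchon--Robert-type \emph{dissipative} weak Euler solutions.

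Even granted this, the remaining --- and hardest --- step is to identify dissipative $L^2$-vorticity weak Euler solutions with vanishing-viscosity limits. This is an open question in the deterministic theory, intimately tied to the anomalous dissipation issue discussed in Section~\ref{subsec-anomalous}; without new input on that side, the probabilistic approach above can at best reduce the conjecture to this open deterministic selection problem, giving a conditional rather than unconditional result. For this reason I expect the best achievable statement at present to be: any weak limit $Q$ is supported on dissipative weak solutions of 2D Euler with initial datum $\xi_0$, with the full conjecture contingent on future progress in 2D Euler selection theory.
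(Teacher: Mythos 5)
The statement you were asked to prove is labelled a \emph{Conjecture} in the paper, and the authors offer no proof of it: in the paragraph preceding it they only establish that any weak limit $Q$ of a sequence $Q_{N_i,\nu_i}$ is supported on \emph{weak} solutions of the deterministic Euler equations with initial datum $\xi_0$ (a ``superposition solution''), and they explicitly state that they do not know whether the support lies in $\mathcal H$. Your proposal, to its credit, does not claim to close this gap either; it is a strategy sketch whose honest conclusion --- that the problem reduces to an open deterministic selection question for 2D Euler --- is essentially the same assessment the authors make. So there is no ``paper proof'' to compare against, and your write-up should be read as a research programme rather than a proof.

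Within that programme, two remarks. First, your quantitative route via a rate $d(Q_{N,\nu},\delta_{\xi^\nu})\leq g(N,\nu)$ would, if the diagonal worked, prove something strictly stronger than the conjecture (that $Q$ is a Dirac at a single viscosity solution), and you correctly identify why it fails: the paper's Theorem \ref{thm approx uniq} is obtained by a compactness/contradiction argument and yields no rate, and any stability constant for 2D Navier--Stokes with $L^2$ vorticity degenerates as $\nu\to 0$, so one cannot control $g$ along a \emph{prescribed} sequence $(N_i,\nu_i)$ --- this is exactly why the paper only proves the existence of \emph{some} recovery sequence (Corollary \ref{cor sec 6.2}) and not the converse. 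Second, your defect-measure idea is attractive but not as clean as stated: the approximants are not enstrophy-balanced in the naive way. In the It\^o formulation the dissipation $2\nu\|\nabla\xi^N\|_{L^2}^2$ is (partially or exactly) cancelled by the noise-induced production term $\varepsilon_N^2\sum_k(\theta_k^N)^2\|\sigma_k\cdot\nabla\xi^N\|_{L^2}^2$ (cf.\ the proof of Lemma \ref{energy-estimate} and the exactly conservative scheme \eqref{SDE-sec5}), and the solutions are only known to lie in $L^\infty(0,T;L^2)$ in vorticity, so $\nu_i\int_0^t\|\nabla\xi^{N_i,\nu_i}_s\|_{L^2}^2\,\d s$ is not a quantity you can extract a nonnegative limit measure from without first resolving precisely the anomalous-dissipation issues of Section \ref{subsec-anomalous}. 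The conjecture therefore remains open, and your proposal should be presented as a conditional reduction, not a proof.
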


\subsection{Weakly quenched exponential mixing properties}\label{subsec-mixing}

The multiplicative transport noise in Stratonovich form used above to perturb 2D
Euler equations is formally vorticity-conservative but not formally
energy-conservative. In general, the energy budget is not clear, namely we
cannot say whether such noise increases or decreases the energy. Due to our
convergence result to the Navier--Stokes equations, however, we can state an
energy-dissipation result, in the precise form of Corollary
\ref{Corollary energy dissipation} below.

\begin{remark}
To avoid misunderstandings, we are not claiming that this noise produces an
anomalous dissipation. Such property means a true dissipation when the
equation is formally energy-conservative. Our noise is not formally
energy-conservative. Thus the only relevant information of Corollary
\ref{Corollary energy dissipation} below is to clarify in which direction
energy goes.
\end{remark}

On the torus $\T^2$, for the unique solution $\xi_{t}\in C\left( [0,T];L^{2}\right)  $
of the deterministic Navier--Stokes equations \eqref{NSE} with
initial condition $\xi_{0}\in L^{2}$, we have
\begin{align*}
\frac{\d}{\d t} \left\Vert \xi_{t}\right\Vert _{L^{2}}^{2}+\alpha\left\Vert
\xi_{t}\right\Vert _{L^{2}}^{2}  & \leq0 , \\
\frac{\d}{\d t} \left\Vert u_{t}\right\Vert _{L^{2}}^{2}+\alpha\left\Vert
u_{t}\right\Vert _{L^{2}}^{2}  & \leq0
\end{align*}
where $\alpha= 8\nu \pi^2$, as a consequence of the inequality $\left\langle -\Delta
f,f\right\rangle \geq 4\pi^2 \left\Vert f\right\Vert _{L^{2}}^{2}$ for smooth $f$. It follows that
\begin{align*}
\left\Vert \xi_{t}\right\Vert _{L^{2}}^{2}  & \leq e^{-\alpha t}\left\Vert
\xi_{0}\right\Vert _{L^{2}}^{2}, \\
\left\Vert u_{t}\right\Vert _{L^{2}}^{2}  & \leq e^{-\alpha t}\left\Vert
u_{0}\right\Vert _{L^{2}}^{2}.
\end{align*}

\begin{definition}
For every $\xi_{\cdot}\in C\left(  \left[  0,T\right]  ;H^- \right)  $, we
call \textit{energy profile} the real valued continuous function
\[
e(t)  :=\frac{1}{2}\left\Vert K\ast\xi_{t}\right\Vert _{L^{2}%
}^{2}, \quad t\in [0,T]  .
\]

\end{definition}

The map $\xi_{\cdot}\mapsto e\left(  \cdot\right)  $ from $C\left(  \left[
0,T\right]  ;H^-\right)  $ to $C\left(  \left[  0,T\right]  ;\mathbb{R}%
\right)  $ is continuous. Given $\xi_{0}\in L^{2}$, the energy profile of
the unique solution $\xi$ of the deterministic Navier--Stokes equations \eqref{NSE}
satisfies $e\left(  t\right)  \leq e^{-\alpha t}e\left(  0\right)  $.
Concerning solutions $\xi^{N}$ of the stochastic 2D Euler equations, always with initial
condition $\xi_{0}\in L^{2}$, since their trajectories are of class $C\left(
\left[  0,T\right]  ;H^{-}\right)  $, the energy profile $e_{N}(t) = \frac12 \big\| K\ast\xi^N_{t}\big\|_{L^2}\, (t\in [0,T])$ is well defined also for them, being in this case a real-valued continuous stochastic process. Recall that $\xi^{N}$ converge in law to $\xi$ on $C([0,T]; H^{-})$. Thanks to the stability of convergence in law by composition with continuous functions, we deduce that $e_N$ converge in law to $e$ on $C([0,T] ; \mathbb{R})  $.

\begin{corollary}\label{Corollary energy dissipation}
For every $\epsilon>0$,
\[
\lim_{N\rightarrow\infty}\mathbb{P}\left(  e_{N}(t)  \leq
e^{-\alpha t} (e(0)  +\epsilon) \mbox{ for all } t\in [0,T] \right)  =1.
\]

\end{corollary}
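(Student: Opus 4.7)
The plan is to leverage the convergence in law $e_N \to e$ on $C([0,T];\mathbb{R})$ already established in the paragraph preceding the statement, together with the strict energy decay bound $e(t)\leq e^{-\alpha t}e(0)$ for the deterministic Navier--Stokes solution, by means of a short continuous mapping argument. The key observation is that the extra $+\epsilon$ in the inequality $e^{-\alpha t}(e(0)+\epsilon)$ creates a strictly positive margin over the value $e^{-\alpha t}e(0)$ already achieved by the deterministic limit, and this margin is exactly what is needed to upgrade qualitative weak convergence to a quantitative probability estimate.

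Concretely, I would introduce the functional
\[
\Phi : C([0,T];\mathbb{R}) \to \mathbb{R}, \qquad \Phi(f) := \sup_{t\in[0,T]}\bigl( f(t) - e^{-\alpha t}(e(0)+\epsilon)\bigr),
\]
which is $1$-Lipschitz in the supremum norm and therefore continuous. The event of interest is precisely $\{\Phi(e_N)\leq 0\}$. Evaluating $\Phi$ on the deterministic energy profile $e$ of the unique Navier--Stokes solution, the inequality $e(t)\leq e^{-\alpha t}e(0)$ (recalled in the paragraph immediately before the definition of the energy profile) gives
\[
\Phi(e) \leq \sup_{t\in[0,T]}\bigl(-\epsilon\, e^{-\alpha t}\bigr) = -\epsilon\, e^{-\alpha T} < 0,
\]
so $\Phi(e)$ is a deterministic strictly negative constant.

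Finally, I would invoke the continuous mapping theorem: since $e_N \to e$ in law on $C([0,T];\mathbb{R})$ and $\Phi$ is continuous, $\Phi(e_N) \to \Phi(e)$ in law, and as the limit is deterministic this is equivalent to convergence in probability. Hence
\[
\mathbb{P}\bigl(\Phi(e_N) \leq 0\bigr) \geq \mathbb{P}\Bigl( |\Phi(e_N) - \Phi(e)| \leq \tfrac{1}{2}\epsilon\, e^{-\alpha T}\Bigr) \longrightarrow 1,
\]
which is exactly the claim. Equivalently, the portmanteau theorem applied to the closed set $A_\epsilon := \{f:\Phi(f)\leq 0\}$ works, since the $\tfrac12\epsilon e^{-\alpha T}$-ball around $e$ lies in $\mathrm{Int}(A_\epsilon)$, so $\liminf_N \mathbb{P}(e_N\in A_\epsilon) \geq \delta_e(\mathrm{Int}(A_\epsilon)) = 1$. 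There is no real obstacle: everything substantial has been carried out in Theorem \ref{thm-main} and the continuity remark preceding the corollary, and the only point that might be overlooked is using the $+\epsilon$ slack to ensure strict inequality and thus room for the limit theorem to operate.
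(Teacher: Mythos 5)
Your proof is correct and is essentially the paper's argument in slightly different packaging: both reduce to the fact that convergence in law of $e_N$ to the deterministic profile $e$ gives convergence in probability in the sup-norm, and both exploit the same margin $\epsilon e^{-\alpha T}$ (the paper by substituting $\epsilon\mapsto e^{-\alpha T}\epsilon$ in the uniform bound $e_N(t)\leq e^{-\alpha t}e(0)+\epsilon$, you by showing $\Phi(e)\leq -\epsilon e^{-\alpha T}<0$ for your Lipschitz functional $\Phi$). No gaps.
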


\begin{proof}
Given $\epsilon>0$, by the above discussions,
\[
\lim_{N\rightarrow\infty} \mathbb{P}\left(  \|e_{N}(\cdot) - e(\cdot)\|_{C([0,T] ; \mathbb{R})} \leq
\epsilon\right)  =1.
\]
Since $e(t) \leq e^{-\alpha t} e(0)$ for all $t\in [0,T]$, we have
  \begin{equation}\label{Corollary energy dissipation-1}
  \lim_{N\rightarrow\infty} \mathbb{P}\big( e_{N}(t) \leq  e^{-\alpha t} e(0) + \epsilon \mbox{ for all } t\in [0,T] \big)  =1.
  \end{equation}
Note that $e^{-\alpha t} e(0) + e^{-\alpha T}\epsilon \leq e^{-\alpha t} (e(0) +\epsilon)$ for all $t\in [0,T]$, replacing $\epsilon$ by $e^{-\alpha T}\epsilon$ in \eqref{Corollary energy dissipation-1} gives us the result.
\end{proof}

We cannot state a similar result for the enstrophy profile%
\[
i( t)  := \Vert \xi_{t}\Vert_{L^{2}}^{2},
\]
even if it is well defined for both $\xi^{N}$ and $\xi$. Indeed, $\xi\in C\left(
\left[  0,T\right]  ;L^{2}\right)  $, hence $i\left(  \cdot\right)  \in
C\left(  \left[  0,T\right]  ;\mathbb{R}\right)  $, but we only know that
$\xi^{N}\in L^{\infty}\left(  \left[  0,T\right]  ;L^{2}\right)  $ and that
$\xi^{N}$ converges in law to $\xi$ in the strong topology of $C\left(
\left[  0,T\right]  ;H^{-}\right)  $. Thus we cannot say that enstrophy is
dissipated (in a probabilistic sense). If true, this would be a result of
anomalous enstrophy dissipation, because formally the enstrophy is conserved
by the stochastic dynamics.

However, for the solution to the deterministic 2D Navier--Stokes equation, we have%
\begin{align*}
\left\Vert \xi_{t}\right\Vert _{H^{-\delta}}^{2}  & \leq \left\Vert \xi
_{t}\right\Vert _{L^2}^{2} \leq e^{-\alpha t}\left\Vert \xi_{0}\right\Vert _{L^2%
}^{2}%
\end{align*}
and the convergence in law of $\xi^N$ to $\xi$ in $C\left([0,T]  ;H^{-}\right)
$. Repeating the argument above gives us the asymptotically exponential decay of vorticity in negative Sobolev norms.

\begin{proposition}\label{prop-mixing-in-probab}
For every $\epsilon,\delta>0$,
\[
\lim_{N\rightarrow\infty}\mathbb{P}\left(  \left\Vert \xi_{t}^{N}\right\Vert
_{H^{-\delta}}^{2}\leq e^{-\alpha t} \big(\Vert \xi_{0}\Vert _{L^{2}}^2 +\epsilon \big)  \mbox{ for all } t\in [0,T] \right)  =1.
\]
\end{proposition}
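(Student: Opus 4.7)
The plan is to mimic the proof of Corollary~\ref{Corollary energy dissipation} verbatim, replacing the energy profile by the negative-Sobolev analogue
\begin{equation*}
i_N(t) := \|\xi^N_t\|_{H^{-\delta}}^2, \qquad i(t) := \|\xi_t\|_{H^{-\delta}}^2.
\end{equation*}
The central observation is that the map $\xi\mapsto \|\xi\|_{H^{-\delta}}^2$ is locally Lipschitz from $H^{-\delta}$ to $\mathbb{R}$, and so the induced functional $\xi_\cdot \mapsto \|\xi_\cdot\|_{H^{-\delta}}^2$ is continuous from $C([0,T];H^{-\delta})$ into $C([0,T];\mathbb{R})$.

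First, I would note that by Theorem~\ref{thm-main} the laws $Q^N$ converge weakly on $C([0,T];H^{-})$ to $\delta_{\xi}$, and since the topology of $H^{-}$ is finer than that of $H^{-\delta}$ for any fixed $\delta>0$, the same convergence persists on $C([0,T];H^{-\delta})$. Applying the continuous mapping theorem to the functional above, the random paths $i_N$ converge in law in $C([0,T];\mathbb{R})$ to the deterministic path $i$. Convergence in law to a deterministic limit upgrades to convergence in probability, so for every $\eta>0$
\begin{equation*}
\lim_{N\to\infty} \mathbb{P}\bigl(\|i_N-i\|_{C([0,T];\mathbb{R})} \leq \eta\bigr) = 1.
\end{equation*}

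To conclude, I would use the dissipation bound for the deterministic Navier--Stokes solution recalled right above the proposition: since $\xi\in C([0,T];L^2)$ and $H^{-\delta}\hookrightarrow L^2$ continuously (i.e.\ $\|\cdot\|_{H^{-\delta}}\leq \|\cdot\|_{L^2}$),
\begin{equation*}
i(t) \leq \|\xi_t\|_{L^2}^2 \leq e^{-\alpha t}\|\xi_0\|_{L^2}^2, \qquad t\in[0,T].
\end{equation*}
On the high-probability event $\{\|i_N-i\|_\infty \leq \eta\}$ this yields $i_N(t)\leq e^{-\alpha t}\|\xi_0\|_{L^2}^2 + \eta$ for all $t\in[0,T]$; choosing $\eta := e^{-\alpha T}\epsilon$ gives $\eta\leq e^{-\alpha t}\epsilon$ on $[0,T]$, and therefore $i_N(t)\leq e^{-\alpha t}(\|\xi_0\|_{L^2}^2+\epsilon)$ uniformly in $t$, as required.

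There is essentially no hard step here: the proposition is a cosmetic variant of Corollary~\ref{Corollary energy dissipation}, the key asymmetry being that the $L^2$-norm of $\xi^N$ is only controlled almost surely (not continuously in $t$), whereas the $H^{-\delta}$-norm sits naturally on the path space $C([0,T];H^{-\delta})$ where weak convergence of Theorem~\ref{thm-main} operates. The only point worth double-checking is the continuity of $\xi_\cdot\mapsto i(\cdot)$ from $C([0,T];H^{-\delta})$ to $C([0,T];\mathbb{R})$, which is immediate from the local Lipschitz property of the squared norm together with uniform control on compact sets.
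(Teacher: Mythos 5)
Your proof is correct and is exactly the argument the paper intends: the paper's own ``proof'' simply says to repeat the argument of Corollary~\ref{Corollary energy dissipation} using $\Vert\xi_t\Vert_{H^{-\delta}}^2\le\Vert\xi_t\Vert_{L^2}^2\le e^{-\alpha t}\Vert\xi_0\Vert_{L^2}^2$, which is precisely what you do. (One trivial slip: the continuous embedding goes $L^2\hookrightarrow H^{-\delta}$, not the other way around, though the inequality $\Vert\cdot\Vert_{H^{-\delta}}\le\Vert\cdot\Vert_{L^2}$ you actually use is the correct one.)
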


In the rest of this subsection, to avoid technical problems (cf. Remark \ref{rem-theta}), we take $\theta^N_k = {\bf 1}_{\{|k|\leq N\}},\, k\in \Z^2_0$. Denote by $L^\infty_0 = L^\infty_0(\T^2)$ the space of functions in $L^\infty (\T^2)$ with zero mean. Then for any $\xi_0 \in L^\infty_0$, by \cite[Theorem 2.10]{BrFlMa}, the following stochastic Euler equation on $\T^2$
  $$\d \xi^N + u^N\cdot\nabla\xi^N\,\d t= \eps_N \sum_{|k|\leq N} \theta^N_k \sigma_{k} \cdot \nabla\xi^N \circ \d W^{k}, \quad \xi^N|_{t=0} = \xi_0$$
admits a unique solution $\xi^{N,\xi_0}$ in $L^\infty_0$; moreover, \cite[Theorem 2.14]{BrFlMa} implies that the equation of characteristics
  $$\d X_t= u_t^N(X_t)\,\d t - \eps_N \sum_{|k|\leq N} \theta^N_k \sigma_{k}(X_t)\circ \d W^k_t$$
generates a stochastic flow $\varphi^{N, \xi_0}_t$ of homeomorphisms on $\T^2$, such that, $\P$-a.s. for all $t\geq 0$ and $x\in \T^2$, it holds
  \begin{equation}\label{repres-flow}
  \xi^{N,\xi_0}_t(\omega, x) = \xi_0\big(\varphi^{N, \xi_0}_{-t} (\omega, x)\big),
  \end{equation}
where $\varphi^{N, \xi_0}_{-t} (\omega, \cdot)$ is the inverse map of $\varphi^{N, \xi_0}_{t} (\omega, \cdot)$. Moreover, $\P$-a.s., the Lebesgue measure on $\T^2$ is invariant under the stochastic flow $\varphi^{N, \xi_0}_t$ for all $t\geq 0$. The above formula implies that, $\P$-a.s., the norms $\big\| \xi^{N,\xi_0}_t \big\|_{L^p}\ (p>1)$ are preserved. In particular, taking $p=2$ and $\delta>0$, by the interpolation inequality, $\P$-a.s.,
  $$\|\xi_0\|_{L^2}^2 = \big\| \xi^{N,\xi_0}_t \big\|_{L^2}^2 \leq \big\| \xi^{N,\xi_0}_t \big\|_{H^\delta} \big\| \xi^{N,\xi_0}_t \big\|_{H^{-\delta}}\quad \mbox{for all } t>0.$$
Combining this with Proposition \ref{prop-mixing-in-probab}, we obtain the asymptotically exponential increase of vorticity in positive Sobolev norms.

\begin{corollary}
Given $\xi_0\in L^\infty_0$, for any $\delta>0$ and $T>0$,
  $$\lim_{N\to \infty} \P \Big( \big\| \xi^{N,\xi_0}_t \big\|_{H^\delta} \geq \frac12 e^{\alpha t/2} \|\xi_0\|_{L^2} \mbox{ for all } t\in [0,T] \Big) =1. $$
\end{corollary}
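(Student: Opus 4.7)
The plan is to combine the $L^2$-preservation of vorticity along the stochastic flow (which is already established immediately before the statement) with the high-probability decay of the $H^{-\delta}$-norm provided by Proposition \ref{prop-mixing-in-probab}, via the standard interpolation inequality $\|f\|_{L^2}^2 \leq \|f\|_{H^\delta}\,\|f\|_{H^{-\delta}}$. The only point to verify is that Proposition \ref{prop-mixing-in-probab} applies to the solution $\xi^{N,\xi_0}$ in the current setting; this is immediate because, under the hypotheses $\theta^N_k = \mathbf{1}_{\{|k|\le N\}}$ and $\xi_0 \in L^\infty_0$, Remark \ref{rem-theta} guarantees that $\xi^{N,\xi_0}$ is the unique weak solution of \eqref{SEE-approx-formulation} with initial data $\xi_0$ and thus lies in the class $\mathcal{C}_N$ to which all earlier results apply.

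Concretely, I would fix $\delta>0$ and $T>0$, pick an $\epsilon>0$ to be chosen below, and apply Proposition \ref{prop-mixing-in-probab} to produce the event
\begin{equation*}
A_{N,\epsilon} := \Bigl\{\, \|\xi^{N,\xi_0}_t\|_{H^{-\delta}}^2 \leq e^{-\alpha t}\bigl(\|\xi_0\|_{L^2}^2 + \epsilon\bigr) \text{ for all } t\in[0,T] \,\Bigr\}
\end{equation*}
with $\mathbb{P}(A_{N,\epsilon}) \to 1$ as $N\to\infty$. On the other hand, since the representation \eqref{repres-flow} together with the Lebesgue-measure invariance of $\varphi^{N,\xi_0}_t$ gives $\|\xi^{N,\xi_0}_t\|_{L^2} = \|\xi_0\|_{L^2}$ almost surely for every $t$, the interpolation inequality (noted in the text just before the corollary) yields, on $A_{N,\epsilon}$,
\begin{equation*}
\|\xi^{N,\xi_0}_t\|_{H^\delta} \;\geq\; \frac{\|\xi_0\|_{L^2}^2}{\|\xi^{N,\xi_0}_t\|_{H^{-\delta}}} \;\geq\; \frac{\|\xi_0\|_{L^2}^2}{\sqrt{\|\xi_0\|_{L^2}^2 + \epsilon}}\, e^{\alpha t/2} \quad \text{for all } t\in[0,T].
\end{equation*}

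To conclude, it suffices to choose $\epsilon>0$ so that $\|\xi_0\|_{L^2}^2/\sqrt{\|\xi_0\|_{L^2}^2 + \epsilon} \geq \tfrac12 \|\xi_0\|_{L^2}$, which holds for instance for any $\epsilon \leq 3 \|\xi_0\|_{L^2}^2$ (assuming $\xi_0 \not\equiv 0$; otherwise the statement is trivial). For such a choice of $\epsilon$, the event $A_{N,\epsilon}$ is contained in the event appearing in the corollary, and since $\mathbb{P}(A_{N,\epsilon}) \to 1$ the conclusion follows. There is no real obstacle: the full force of the earlier convergence theorem has already been packaged in Proposition \ref{prop-mixing-in-probab}, and the remaining step is the elementary observation that $L^2$-conservation plus $H^{-\delta}$-smallness forces $H^\delta$-largeness by interpolation.
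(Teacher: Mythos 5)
Your proposal is correct and follows exactly the paper's (implicit) argument: $L^2$-conservation along the stochastic flow, the interpolation inequality $\|\xi_0\|_{L^2}^2 \leq \|\xi^{N,\xi_0}_t\|_{H^\delta}\|\xi^{N,\xi_0}_t\|_{H^{-\delta}}$, and Proposition \ref{prop-mixing-in-probab}. Your explicit choice $\epsilon \leq 3\|\xi_0\|_{L^2}^2$ just makes precise the step the paper leaves to the reader.
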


Next we will deduce a result on the weakly quenched mixing behavior of the stochastic flows $\varphi^{N, \xi_0}_t$.

\begin{lemma}
Let $\xi _{0}\in L^\infty_0$. There exists a null set $\mathcal N\subset \Omega $ such that for all $\omega \in \mathcal N^{c}$, for all $N\in \N$, for every $ f\in H^\delta$ and all $t\geq 0$, we have
\begin{eqnarray*}
\left\vert \int_{\mathbb{T}^{2}}f\left( \varphi _{t}^{N, \xi _{0}}( \omega
,x) \right) \xi _{0}( x) \,\d x
\right\vert  &\leq &\big\Vert \xi _{t}^{N, \xi _{0}}( \omega) \big\Vert
_{H^{-\delta}} \Vert f \Vert_{H^{\delta}}.
\end{eqnarray*}
\end{lemma}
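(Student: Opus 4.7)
The plan is to rewrite the integral on the left-hand side as a duality pairing, using the representation formula \eqref{repres-flow} together with Lebesgue-measure invariance of the stochastic flow, and then to invoke $H^\delta$--$H^{-\delta}$ duality.

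The first step is to produce a single null set which is good simultaneously for every $N$ and every $t\geq 0$. For each fixed $N$, the results of \cite{BrFlMa} recalled above provide a $\P$-null set $\mathcal N_N \subset \Omega$ outside which both \eqref{repres-flow} holds for all $t\geq 0$ and $x\in\T^2$, and the homeomorphism $\varphi^{N,\xi_0}_t(\omega,\cdot)$ preserves Lebesgue measure for all $t\geq 0$. I would then set $\mathcal N := \bigcup_{N\in\N} \mathcal N_N$, which is still $\P$-null as a countable union of null sets.

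Next, fix $\omega\in \mathcal N^c$, $N\in \N$, $t\geq 0$ and $f\in H^\delta$. Performing the change of variables $y = \varphi^{N,\xi_0}_t(\omega, x)$, that is $x = \varphi^{N,\xi_0}_{-t}(\omega, y)$, and using the Lebesgue-measure invariance of the flow gives
\begin{equation*}
\int_{\T^2} f\big(\varphi^{N,\xi_0}_t(\omega, x)\big)\,\xi_0(x)\,\d x
= \int_{\T^2} f(y)\,\xi_0\big(\varphi^{N,\xi_0}_{-t}(\omega, y)\big)\,\d y
= \int_{\T^2} f(y)\,\xi^{N,\xi_0}_t(\omega, y)\,\d y,
\end{equation*}
where the last identity invokes \eqref{repres-flow}. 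Reading the final integral as the $H^\delta$--$H^{-\delta}$ duality pairing (which is legitimate because $\xi^{N,\xi_0}_t(\omega,\cdot)\in L^\infty\subset H^{-\delta}$ and $f\in H^\delta$), the claimed inequality follows at once.

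I do not anticipate any real obstacle. The one subtlety worth handling carefully is that for $\delta < 1$ the function $f$ need not be continuous, so the composition $f\circ \varphi^{N,\xi_0}_t(\omega,\cdot)$ must be understood as an $L^2$ equivalence class; this is well defined precisely because $\varphi^{N,\xi_0}_t(\omega,\cdot)$ is a Lebesgue-measure-preserving homeomorphism, and the change-of-variables identity above then extends from continuous $f$ (where it is unambiguous) to all $f\in H^\delta\hookrightarrow L^2$ by density. Uniformity of the exceptional set in $N$ is delivered by the countable union construction, while uniformity in $t$ and in $f$ is already encoded in the almost-sure statements used above, so no further refinement is needed.
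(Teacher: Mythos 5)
Your proof is correct and follows essentially the same route as the paper: a countable union of the per-$N$ null sets, the measure-preserving change of variables combined with the representation formula \eqref{repres-flow}, and then the $H^\delta$--$H^{-\delta}$ duality pairing. The extra remark about interpreting $f\circ\varphi^{N,\xi_0}_t(\omega,\cdot)$ for non-continuous $f$ via density is a reasonable refinement that the paper leaves implicit.
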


\begin{proof}
For any $N\in \N$, there exists a null set $\mathcal N_{N} \subset \Omega$ such that for all $\omega \in \mathcal N_{N}^{c}$, for all $t\geq 0$, the formula \eqref{repres-flow} holds and the Lebesgue measure is invariant under the map $\varphi _{t}^{N, \xi _{0}}(\omega, \cdot)$. For every $f\in H^\delta$, we have
  $$\aligned
  \left\vert \int_{\mathbb{T}^{2}}f\left( \varphi _{t}^{N, \xi _{0}}( \omega,x) \right) \xi _{0}( x) \,\d x \right\vert
  & =\left\vert \int_{\mathbb{T}^{2}}\xi _{t}^{N, \xi _{0}}( \omega ,x) f( x) \,\d x \right\vert \\
  &\leq \big\Vert \xi _{t}^{N, \xi _{0}}( \omega) \big\Vert_{H^{-\delta}} \Vert f\Vert_{H^{\delta}}.
  \endaligned$$
Now it is clear that the assertion holds.
\end{proof}

The above result plus Proposition \ref{prop-mixing-in-probab} gives us the weakly quenched exponential mixing property of the stochastic flows $\varphi^{N, \xi_0}_t$.

\begin{corollary}
Under the previous notations, for every $\xi _{0}\in L_0^\infty, f\in H^\delta$, for every $\epsilon >0$,
\[
\lim_{N\rightarrow \infty } \P\left( \left\vert \int_{\mathbb{T}^{2}}f\left(
\varphi _{t}^{N, \xi _{0}} ( x) \right) \xi _{0}( x)
 \,\d x \right\vert \leq e^{-\alpha t/2} (\Vert \xi_{0} \Vert_{L^2}+\epsilon) \|f \|_{H^\delta} \mbox{ for all } t\in [0,T]
\right) =1.
\]
\end{corollary}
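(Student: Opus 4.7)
The plan is to chain the preceding lemma with Proposition \ref{prop-mixing-in-probab}; all the real work has already been done, so what remains is essentially a bookkeeping argument on $\epsilon$.

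First, I would record that the preceding lemma produces a deterministic pointwise bound: there is a null set $\mathcal{N}\subset \Omega$ such that for every $\omega\in \mathcal{N}^c$, every $N$, every $t\geq 0$ and every $f\in H^\delta$,
\begin{equation*}
\left\vert \int_{\T^2} f(\varphi_t^{N,\xi_0}(\omega,x))\,\xi_0(x)\,\d x\right\vert \leq \big\Vert \xi_t^{N,\xi_0}(\omega)\big\Vert_{H^{-\delta}} \Vert f\Vert_{H^\delta}.
\end{equation*}
Consequently, the event
\begin{equation*}
A_N(\eta):= \Big\{\omega : \big\Vert \xi_t^{N,\xi_0}(\omega)\big\Vert_{H^{-\delta}}^2 \leq e^{-\alpha t}(\Vert \xi_0\Vert_{L^2}^2 + \eta) \text{ for all } t\in[0,T]\Big\}
\end{equation*}
is contained (up to the null set $\mathcal{N}$) in the event
\begin{equation*}
B_N(\eta):= \Big\{\omega : \Big\vert \int_{\T^2} f(\varphi_t^{N,\xi_0}(\omega,x))\,\xi_0(x)\,\d x\Big\vert \leq e^{-\alpha t/2}(\Vert\xi_0\Vert_{L^2}^2+\eta)^{1/2}\,\Vert f\Vert_{H^\delta} \text{ for all } t\in[0,T]\Big\}.
\end{equation*}
By Proposition \ref{prop-mixing-in-probab}, $\P(A_N(\eta))\to 1$ as $N\to\infty$ for every fixed $\eta>0$, so the same holds for $B_N(\eta)$.

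The only remaining point is to convert the bound $(\Vert\xi_0\Vert_{L^2}^2+\eta)^{1/2}$ into the form $\Vert\xi_0\Vert_{L^2}+\epsilon$ appearing in the statement. Given $\epsilon>0$, I would use the elementary inequality $(a^2+\eta)^{1/2}\leq a+\epsilon$ whenever $\eta$ is sufficiently small (say $\eta = \epsilon^2$ plus a small correction, or more simply use $\sqrt{a^2+\eta}\leq \sqrt{a^2}+\sqrt{\eta}=a+\sqrt{\eta}$ and take $\eta=\epsilon^2$). With this choice of $\eta$, the event $B_N(\eta)$ is contained in the event displayed in the corollary, and the conclusion follows.

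There is no real obstacle here; the only subtlety, which I have addressed above, is the mild mismatch between the squared negative-Sobolev bound of Proposition \ref{prop-mixing-in-probab} and the unsquared bound that appears in the statement, which is settled by choosing the tolerance in Proposition \ref{prop-mixing-in-probab} to be $\epsilon^2$ rather than $\epsilon$.
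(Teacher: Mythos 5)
Your proof is correct and follows exactly the route the paper intends: the corollary is obtained by chaining the deterministic bound of the preceding lemma with Proposition \ref{prop-mixing-in-probab}, and your $\sqrt{a^2+\eta}\leq a+\sqrt{\eta}$ adjustment with $\eta=\epsilon^2$ correctly handles the squared-versus-unsquared mismatch (the paper leaves this bookkeeping implicit). Nothing further is needed.
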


\subsection{Further discussions on anomalous dissipation of enstrophy}\label{subsec-anomalous}

As already pointed out earlier, the fact that the vorticity processes $\xi^N$ converge to a limit $\xi$ which is explicitly dissipating suggests that a partial dissipation should already take place at the level of $\xi^N$; the problem is that we only have convergence in $C([0,T];H^-)$ and not in $C([0,T];L^2)$, which does not allow to conclude.

The problem is not only technical: examples of processes $\xi^N$ which preserve vorticity almost surely but are converging in $C([0,T];H^-)$ to the solution $\xi$ of deterministic Navier--Stokes equations can be indeed found. One example is given by the processes from Section \ref{sec5}, as pointed out in Remark \ref{rem end sec5}.

Another example is the following: let $\xi_0\in C^\infty(\mathbb{T}^2)$ and the sequence $\theta^N$ be taken as in the last subsection, that is, for each fixed $N$, only a finite number of $\theta^N_k$ are non zero. Then the solution $\xi^N$ will preserve spatial regularity over time, for instance because $\Vert \xi^N\Vert_{L^\infty}$ can be controlled uniformly and Beale--Kato--Majda criterion can be applied, see \cite{CrFlHo}. This implies that the formal computation on vorticity invariance is actually rigorous and so $\big\Vert \xi^N_t \big\Vert_{L^2}=\Vert\xi_0\Vert_{L^2}$ for all $t>0$.

The above examples show that our scaling limit does not a priori give any information on whether anomalous dissipation will take place. It definitely does not take place for all solutions, but it might at least for \textit{some} of them. Before proceeding further, let us give a rigorous definition.

\begin{definition}
Let $\xi_\cdot$ be a weak solution of \eqref{SEE-formulation} satisfying \eqref{energy estimate thm-existence}. We say that anomalous dissipation of enstrophy takes place with positive probability if, for some $t\in [0,T]$, it holds
\begin{equation}\label{condition def anomalous dissipation}
\mathbb{P}\big(\Vert \xi_t\Vert_{L^2}<\Vert \xi_0\Vert_{L^2} \big)>0.
\end{equation}
\end{definition}

\begin{remark}
Since $\Vert\xi_t\Vert\leq \Vert \xi_0\Vert_{L^2}$ with probability one, condition \eqref{condition def anomalous dissipation} is equivalent to requiring that, for some $t\in [0,T]$, $\mathbb{E}(\Vert \xi_t\Vert_{L^2}) <\mathbb{E}(\Vert \xi_0\Vert_{L^2})$. Such a quantity might be easier to handle because although we do not know whether $\xi$ has trajectories in $C([0,T];L^2)$, the map $t\mapsto \mathbb{E}(\Vert \xi_t\Vert_{L^2})$ might be continuous as an effect of the averaging. However, condition \eqref{condition def anomalous dissipation} is not equivalent to
\begin{equation}\nonumber
\mathbb{P}\big( \Vert \xi_t\Vert_{L^2}<\Vert \xi_0\Vert_{L^2} \text{ for some } t\in [0,T]\big)>0;
\end{equation}
while the latter seems a more natural definition of anomalous dissipation, the fact that it involves evaluation on an uncountable set $[0,T]$ for a process $\xi_t$ with possibly not continuous trajectories in $L^2$ (not even right/left continuous) makes it very difficult to be handled.
\end{remark}

The occurrence of anomalous dissipation might rely on the kind of noise we use. Here we restrict to the case of a noise constructed from $\theta\in\ell^2$ and $\{\sigma_k\}_{k\in \Z_0^2}$ as before, but observe that this is a very specific choice: it is an isotropic, divergence-free noise whose covariance operator is a Fourier multiplier; this leaves open the question whether other choices of noise might be better suited for obtaining an anomalous dissipation effect. One can also consider similar problems for equations on a 2D domain; for the moment we do not have any idea on them: indeed, the search of a family of divergence free vector fields on a domain with the property \eqref{useful-equality} is not an easy task. In any case it would be interesting to give an answer to the following:

\begin{problem}\label{problem1}
Do there exist an initial data $\xi_0\in L^2$, a family of coefficients $\theta\in\ell^2$ and an associated solution $\xi$ which displays anomalous dissipation of enstrophy?
\end{problem}

A different question, in the case of a positive answer for Problem \ref{problem1}, is related to anomalous dissipation occurring for \textit{all} initial data.

\begin{problem}\label{problem4}
Does there exist a family of coefficients $\theta$ such that any solution of \eqref{SEE-formulation} satisfying \eqref{energy estimate thm-existence}, for any initial data $\xi_0\in L^2$, displays anomalous dissipation of enstrophy with positive probability?
\end{problem}

Clearly, if a positive answer to Problem \ref{problem4} could be given, then the previous examples would show that $\theta$ cannot consist of all but a finite number of $\theta_k$ being $0$; more refined arguments show that in general $\theta_k$ cannot decay too fast as $k\to\infty$. On the other hand, condition $\theta\in\ell^2$, which is required for the equation to be meaningful, implies that such decay cannot be too slow either. It would be interesting to explore the case of $\theta_k$ decaying ``almost as slowly as possible'', for instance taking
$$ \theta_k \sim \frac{1}{\vert k\vert \log\vert k\vert}.$$
Observe however that dealing with such a choice of $\theta$ is highly non trivial: uniqueness of solutions of \eqref{SEE-formulation} for such $\theta$, even in the case of smooth initial data, is not known.

\bigskip

\noindent \textbf{Acknowledgements.} The third named author is grateful to the financial supports of the grant ``Stochastic models with spatial structure'' from the Scuola Normale Superiore di Pisa, the National Natural Science Foundation of China (Nos. 11571347, 11688101), and the Youth Innovation Promotion Association, CAS (2017003).

\end{document}